\documentclass{article}
\usepackage[utf8]{inputenc}

\usepackage{amsfonts}
\usepackage{amssymb}
\usepackage{amsmath}
\usepackage{amsthm}
\usepackage{mathtools}
\usepackage{cleveref}
\usepackage{url}
\usepackage{tikz}
\usepackage{tikz-cd}
\usepackage{adjustbox}
\usepackage{enumitem}
\usepackage{subcaption}
\usepackage{graphicx}
\usepackage{afterpage}
\usepackage{changepage}
\usepackage{authblk}
\usepackage{longtable}
\usepackage{booktabs}
\usepackage{bm}
\usepackage{multicol}
\usepackage{wrapfig}
\usepackage{todonotes}

\usetikzlibrary{knots}

\usepackage[
    backend=biber,
    style=alphabetic,
    sorting=nyt,
    doi=false,
    url=false,
    isbn=false,
    maxbibnames=10,
]{biblatex}
\DeclareFieldFormat[misc]{title}{``#1''}

\theoremstyle{plain}
    \newtheorem{maintheorem}{Theorem}
    \newtheorem{theorem}{Theorem}[section]
    \newtheorem{proposition}[theorem]{Proposition}
    \newtheorem{corollary}[theorem]{Corollary}
    \newtheorem{lemma}[theorem]{Lemma}
    \newtheorem{claim}[theorem]{Claim}

\theoremstyle{definition}
    \newtheorem{definition}[theorem]{Definition}
    \newtheorem{algorithm}[theorem]{Algorithm}
    
    \newtheorem{example}[theorem]{Example}
    \newtheorem{question}[theorem]{Question}

\theoremstyle{remark}
	\newtheorem{remark}[theorem]{Remark}%

\crefname{maintheorem}{Theorem}{Theorems}
\crefname{claim}{Claim}{Claims}

\newtheoremstyle{restated}
    {\topsep}{\topsep} 
    {\itshape}         
    {}                 
    {\bfseries}        
    {.}                
    { }                
    {\thmname{#1} \ref{#3} {\normalfont(Restated)}}
    
\theoremstyle{restated}
    \newtheorem{restate-theorem}{Theorem}
    \newtheorem{restate-proposition}{Proposition}
    \newtheorem{restate-corollary}{Corollary}

\AddToHook{env/proposition/begin}{\crefalias{theorem}{proposition}}
\AddToHook{env/corollary/begin}{\crefalias{theorem}{corollary}}
\AddToHook{env/lemma/begin}{\crefalias{theorem}{lemma}}
\AddToHook{env/claim/begin}{\crefalias{theorem}{claim}}
\AddToHook{env/convention/begin}{\crefalias{theorem}{convention}}
\AddToHook{env/definition/begin}{\crefalias{theorem}{definition}}
\AddToHook{env/algorithm/begin}{\crefalias{theorem}{algorithm}}
\AddToHook{env/condition/begin}{\crefalias{theorem}{condition}}
\AddToHook{env/example/begin}{\crefalias{theorem}{example}}
\AddToHook{env/question/begin}{\crefalias{theorem}{question}}
\AddToHook{env/remark/begin}{\crefalias{theorem}{remark}}
\AddToHook{env/note/begin}{\crefalias{theorem}{note}}
    
\numberwithin{equation}{section}

\newcommand{\ZZ}{\mathbb{Z}}

\newcommand{\RR}{\mathbb{R}}

\newcommand{\FF}{\mathbb{F}}

\newcommand{\id}{\mathit{id}}
\newcommand{\isom}{\cong}
\newcommand{\htpy}{\simeq}

\newcommand{\del}{\partial}

\renewcommand{\emptyset}{\varnothing}

\renewcommand{\epsilon}{\varepsilon}

\DeclareMathOperator{\Hom}{Hom}

\DeclareMathOperator{\Tor}{Tor}

\DeclareMathOperator{\qdeg}{qdeg}

\DeclareMathOperator{\Cone}{Cone}

\DeclareMathOperator{\Cob}{Cob}
\DeclareMathOperator{\Mat}{Mat}
\DeclareMathOperator{\Kom}{Kom}
\DeclareMathOperator{\Kob}{Kob}
\DeclareMathOperator{\Diag}{Diag}
\DeclareMathOperator{\GrMod}{GrMod}
\DeclareMathOperator{\Wh}{Wh}

\newcommand{\CKh}{\mathit{CKh}}
\newcommand{\CKhI}{\mathit{CKhI}}
\newcommand{\Kh}{\mathit{Kh}}
\newcommand{\KhI}{\mathit{KhI}}

\newcommand{\ca}{\alpha}

\newcommand{\uca}{\overline{\ca}}
\newcommand{\dca}{\underline{\ca}}

\newcommand{\ud}{\overline{d}}
\newcommand{\dd}{\underline{d}}
\newcommand{\ds}{\underline{s}}
\newcommand{\us}{\overline{s}}

\newcommand{\lin}{\text{in}}
\newcommand{\lout}{\text{out}}

\renewcommand{\uca}{\overline{\ca}}

\renewcommand{\us}{\overline{s}}

\renewcommand{\sf}{\mathsf}
\renewcommand{\cal}{\mathcal}

\title{
    Involutive Khovanov homology and equivariant knots II
}

\author{Taketo Sano}

\date{August 7, 2026}

\newcommand{\addresses}{{
  \bigskip
  \noindent
  Taketo Sano, \textsc{RIKEN iTHEMS, Wako, Saitama 351-0198, Japan }\par\nopagebreak
  \noindent
  \textit{E-mail address}: \url{taketo.sano@riken.jp}
}}

\begin{document}
    \maketitle
    \begin{abstract}
    In the spirit of Bar-Natan's formulation of Khovanov homology for tangles, we extend the framework of involutive Khovanov homology to \textit{involutive tangles}. This enables a divide-and-conquer computation of involutive Khovanov homology and the equivariant Rasmussen invariant, which results in a significant speedup for the algorithmic computation. With this, we obtain new examples of strongly invertible knots for which no slice disk is smoothly isotopic rel boundary to its symmetric counterpart. In particular, we show that the Whitehead doubles of the pretzel knots $P(-3, 3, -3)$ and $P(-5, 5, -5)$ admit exotic pairs of slice disks.
\end{abstract}
    
    \setcounter{tocdepth}{2}
    
    \section{Introduction}
\label{sec:intro}

\textit{Strongly invertible knots} have been studied for decades; see \cite{Sak:1986} and references therein. Recent developments in the area have given rise to new directions in their research and applications, in particular in the context of exotic 4-manifolds and exotically knotted surfaces. These include equivariant refinements of classical concordance invariants \cite{BI:2022, AB:2024, MP:2023b, Pri:2022, Pri:2024, PF:2023, BC:2026, Zam:2026}, as well as equivariant invariants built from knot homology theories, namely Khovanov homology \cite{Wat:2017, Sna:2018, LS:2022a, HS:2024, Sano:2025b, BDM+:2026a}, knot Floer homology (grid homology) \cite{DMS:2023, Mal:2024, Par:2024, Fra:2026}, and instanton Floer homology \cite{ADM+:2023, DMT:2026}.

In \cite{Sano:2025b}, the author introduced an involutive version of Khovanov homology for strongly invertible knots (more generally, for \textit{involutive knots and links}), following the formalism of \textit{involutive knot Floer homology} \cite{HM:2017,DMS:2023}. 
We further defined the \textit{equivariant Rasmussen invariant}, a pair of integer-valued invariants $(\ds, \us)$ that obstructs \textit{isotopy-equivariant sliceness} in the $4$-ball $D^4$. That is, if a strongly invertible slice knot $K$ has non-trivial equivariant Rasmussen invariant (i.e.\ $(\ds(K), \us(K)) \neq (0, 0)$), then we may immediately conclude that $K$ is not isotopy-equivariantly slice in $D^4$; in other words, \textit{no} slice disk $D$ of $K$ is smoothly isotopic rel $K$ to its symmetric counterpart $D'$ (\cite[Corollary 1.11]{Sano:2025b}). 

In that paper, we improved the program \verb|yui| \cite{Sano:YUI} so that it computes these invariants, and performed direct computation to show that the three strongly invertible slice knots appearing in Hayden and Sundberg's paper \cite{HS:2024} (namely, $m(9_{46})$, $15n_{103488}$, and $17nh_{73}$) have non-trivial equivariant Rasmussen invariants, $(\ds, \us) = (0, 2)$. We expected more interesting examples to be found by direct computation, but at that time the computational capacity was limited. 

In this paper, with the aim of making involutive Khovanov homology and the equivariant Rasmussen invariant more computable, we extend the framework to \textit{involutive tangles}. This extension is based on Bar-Natan's reformulation of Khovanov homology via tangles and cobordisms \cite{BN:2005}, which both clarifies the construction and improves the efficiency of computing ordinary Khovanov homology and the Rasmussen invariant \cite{BN:2007,Sch:2021}. 

\subsection*{Motivating problem}

It is an open question whether a knot $K$ being (smoothly) slice is equivalent to its (positive, untwisted) Whitehead double $\Wh(K)$ being slice \cite[Problem 1.38]{Kir:1997}. In \cite[Conjecture 1.2]{GHK+:2023}, the authors further ask whether the (smooth) isotopy class of a slice disk is determined by that of its Whitehead double. It is natural to ask an equivariant or isotopy-equivariant analogue of these questions. Here, we focus on the latter. Throughout, $\Wh$ denotes the positive untwisted Whitehead doubling operation, and for a strongly invertible knot $K$ with strong inversion $\tau$ of $S^3$, we also denote its obvious extension to $D^4$ by $\tau$. 

\begin{question}
    For a strongly invertible slice knot $K$, is $K$ isotopy-equivariantly slice if and only if $\Wh(K)$ is so?
\end{question}

\begin{question}
\label{q:wh-slice-disks}
    For a slice disk $D$ of a strongly invertible slice knot $K$, are $D$ and $\tau D$ isotopic if and only if $\Wh(D)$ and $\tau \Wh(D)$ are so?
\end{question}

Here, we give evidence supporting affirmative answers to these questions. Using the program \cite{Sano:YUI}, which we improved based on the algorithm introduced in this paper, we obtain the following result by direct computation. 

\begin{maintheorem}
\label{thm:wh-pretzel-exotic}
    Let $K$ be the (strongly invertible slice) pretzel knot $P(-3, 3, -3)$ or $P(-5, 5, -5)$, and let $D$ be any slice disk of $K$. Then $K$ is not isotopy-equivariantly slice, and in particular the two slice disks $D$ and $\tau D$ are not smoothly isotopic rel $K$. Furthermore, the Whitehead double $\Wh(K)$ is not isotopy-equivariantly slice, and in particular the slice disks $\Wh(D)$ and $\tau \Wh(D)$ are not smoothly isotopic rel $\Wh(K)$; in fact, they are topologically isotopic rel $\Wh(K)$ and hence form an exotic pair. 
\end{maintheorem}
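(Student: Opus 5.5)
The argument has three parts: a computation, an application of the obstruction from \cite{Sano:2025b}, and a separate topological input for the exoticness claim.

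\textbf{Step 1: the knots and their symmetry.} Fix a strong inversion $\tau$ on $K = P(-3,3,-3)$ (resp.\ $P(-5,5,-5)$). The natural candidate is rotation by $\pi$ about an axis in the plane of the standard pretzel diagram that passes through the middle twist region. Check that $K$ is slice: the standard ribbon move is a band between the first two twist regions, since $-p$ and $p$ cancel, and it produces a two-component unlink. Next, fix a $\tau$-symmetric diagram of the Whitehead double $\Wh(K)$. Take the $0$-framed (blackboard-corrected) doubling of a symmetric diagram of $K$, and place the positive clasp on the axis so that $\tau$ preserves the double. Then $\Wh(D)$ is a slice disk for $\Wh(K)$ and $\tau\Wh(D)=\Wh(\tau D)$. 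These diagrams become involutive tangle diagrams once they are cut along a sequence of $\tau$-invariant discs.

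\textbf{Step 2: computation and obstruction.} Compute $(\ds,\us)$ for $K$ and for $\Wh(K)$. For this, run the divide-and-conquer algorithm of this paper on the involutive tangle decomposition via \cite{Sano:YUI}. The algorithm builds the involutive Bar-Natan complexes of the pieces, simplifies them by delooping and Gaussian elimination compatibly with the involution, glues them, and reads off the equivariant Rasmussen invariant. The expected outcome is $(\ds,\us)\neq(0,0)$ for all four knots, for instance $(0,2)$ as in the Hayden--Sundberg examples. By \cite[Corollary 1.11]{Sano:2025b}, no slice disk of $K$ is isotopic rel $K$ to its $\tau$-image, and likewise for $\Wh(K)$. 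Applying this to $D$ and to $\Wh(D)$ gives the non-isotopy statements.

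\textbf{Step 3: topological isotopy.} The disks $\Wh(D)$ and $\tau\Wh(D)=\Wh(\tau D)$ are Whitehead doubles of disks, and $\Wh(K)$ has Alexander polynomial $1$. The complements of both disks have fundamental group $\ZZ$, because the meridian of a Whitehead-doubled disk normally generates and the doubling pattern kills the companion's group. So both are $\ZZ$-disks. Conway--Powell's theorem says that $\ZZ$-disks with the same boundary are topologically isotopic rel boundary. Invoking it shows the pair is topologically isotopic, so it is exotic.

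\textbf{Main obstacle.} The main obstacle is the computation for $\Wh(K)$. Its diagram has roughly four times the crossings of $K$, which is far beyond direct computation. Feasibility therefore depends on choosing a good involutive tangle decomposition, and on keeping each intermediate complex small after simplification while still tracking the involution up to homotopy. I would first cut the doubled diagram into the symmetric clasp tangle and pairs of doubled twist-region tangles exchanged by $\tau$. I would simplify each piece separately before gluing.
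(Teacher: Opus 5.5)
Your proposal matches the paper's argument. Nontrivial $(\ds,\us)$ for $K$ and $\Wh(K)$ (in fact $(0,2)$ for both), combined with \cite[Corollary 1.11]{Sano:2025b}, gives the smooth non-isotopy. The $\ZZ$-disk property of $\Wh(D)$, which the paper cites from \cite[Proposition 2.1]{GHK+:2023} rather than sketching via van Kampen as you do, together with \cite[Theorem 1.2]{CP:2021} and \cite[Theorem 2.2]{Hay:2021}, gives the topological isotopy. The only minor difference is that the paper obtains $(\ds(K),\us(K))=(0,2)$ by the hand computation of \Cref{thm:pretzel-equiv-s}, valid for all $P(-p,p,-p)$, and uses the computer only for $\Wh(K)$ (\Cref{prop:experiment4}); computing both by machine, as you propose, works equally well.
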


Note that $P(-3, 3, -3)$ is the knot $m(9_{46})$, and the corresponding statement is a special case of \cite[Theorem 7.8]{DMT:2026}. To the best of our knowledge, the result for $P(-5, 5, -5)$ has not appeared in the literature.

\subsection*{Involutive Khovanov homology via tangles}

Here, we briefly sketch the reformulation of involutive Khovanov homology. Let $\tau$ denote the $180$-degree rotation about the $y$-axis, and let $B \subset \partial D^2$ be a $\tau$-invariant set of an even number of points, i.e.\ $\tau B = B$. An \textit{involutive tangle} $T$ is a tangle in the $3$-ball $D^3$ with $\partial T = B$ satisfying $\tau(T) = T$. An \textit{involutive tangle diagram} is a tangle diagram $T$ with $\partial T = B$ satisfying $\tau T = T$. In both cases, we ignore orientation; that is, $\tau$ is not required to preserve the orientation of $T$. (In particular, if $\tau$ \textit{reverses} the orientation, $T$ is called \textit{strongly invertible}.) For such $T$, let $[T]$ denote the formal Khovanov complex of $T$, considered over $\FF_2$. Then $[T]$ is naturally equipped with an isomorphism $i\colon [T] \to \tau [T]$ such that $\tau(i) = i^{-1}$. We call such a pair a \textit{$\tau$-equivariant complex}, which is an object of the \textit{category of $\tau$-equivariant complexes} $\Kob_{\bullet/l}^\tau(B)$. In particular, $[T]^\tau := ([T], i)$ is called the \textit{$\tau$-equivariant Khovanov complex} of $T$.

\begin{maintheorem}
\label{thm:inv-tang-invariance}
    For an involutive tangle diagram $T$ with $\partial T = B$, the chain homotopy type of the $\tau$-equivariant Khovanov complex $[T]^\tau$ is invariant under the involutive Reidemeister moves, and also the $I$-move when $B = \emptyset$. 
\end{maintheorem}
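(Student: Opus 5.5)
We follow Bar-Natan's proof of invariance for tangles, carried out $\tau$-equivariantly, and reduce to local moves via the planar algebra structure of the formal Khovanov complex. An involutive Reidemeister move on $T$ is one of two kinds. Either it is a pair of ordinary Reidemeister moves performed simultaneously in two disjoint disks $\Delta$ and $\tau\Delta$ interchanged by $\tau$ (a \emph{free} move), or it is a move supported in a single $\tau$-invariant disk centered on the axis (an \emph{on-axis} move, of types R1, R2, R3 in the finite list of involutive Reidemeister moves).

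In both cases we write $T = P(T_1,\dots,T_k)$ for a $\tau$-equivariant planar arc diagram $P$, where the move changes only the inputs $T_j$. Since $[\,\cdot\,]$ is a planar algebra morphism (Bar-Natan), we get $[T] = P([T_1],\dots,[T_k])$. The structure map $i$ is induced by the action of $\tau$ on $P$ together with the identifications $[T_j]\to\tau[T_{\sigma(j)}]$.

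The first step is to show that a $\tau$-equivariant planar operator sends $\tau$-equivariant chain homotopy equivalences of its inputs to $\tau$-equivariant ones. The free case follows directly. Take the Bar-Natan homotopy equivalence $f\colon[\Delta\text{-part}]\to[\Delta'\text{-part}]$ with inverse $g$ and homotopies $h$. Put $\tau(f)$, $\tau(g)$, $\tau(h)$ on the partner disk and tensor. The resulting maps $f\otimes\tau f$ and $g\otimes\tau g$ commute with $i$ strictly, and the homotopy $h\otimes 1 + g f\otimes\tau h$ is also $\tau$-equivariant up to the symmetric correction. So we use the symmetrized homotopy. Over $\FF_2$ no sign issues arise, since tensoring in $\Kob$ over $\FF_2$ has no Koszul signs.

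The second step handles on-axis moves, which are the real content. For each on-axis move we write the small complex explicitly. Examples are a crossing on the axis with its $\tau$-image, and the symmetric R2 where two crossings swap under $\tau$ or each lies on the axis. We then check that Bar-Natan's Gaussian-elimination equivalences (delooping plus cancellation of isomorphism components) can be chosen to commute with $i$. The key point is that when the cancelled isomorphism is a $\tau$-fixed component, or a pair swapped by $\tau$, Gaussian elimination performed equivariantly produces $\tau$-equivariant $f$, $g$ and homotopies. We verify this by a general equivariant Gaussian elimination lemma: if $[T]$ has a $\tau$-stable decomposition with a $\tau$-stable isomorphism block, then elimination is an equivariant homotopy equivalence. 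After that, each on-axis move becomes a finite check.

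The third step is the $I$-move when $B=\emptyset$. There the relevant notion of equivalence is in the involutive sense, allowing $i$ to change by a homotopy or an automorphism. We expect to compare the two structure maps via the composite of Reidemeister equivalences realizing the isotopy, as in the link case of the earlier paper. The main obstacle is the on-axis R3-type moves and the symmetric R2 with crossings on the axis. There the naive Bar-Natan homotopy is not symmetric, and one must verify that $i$ interacts correctly with the chosen delooping. We would first try to handle these by equivariant Gaussian elimination, falling back to direct cobordism computation if no symmetric decomposition exists.
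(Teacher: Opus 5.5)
\textbf{There is a genuine gap.} It is exactly the case you flag as the main obstacle: the on-axis R3 move cannot be handled by equivariant Gaussian elimination, and your plan gives no alternative. Your method does work for the on-axis R1 and R2 moves. There every crossing lies on the axis, all resolutions are $\tau$-invariant, and the circle to be delooped is symmetric.

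In an involutive R3 the triangle is preserved by $\tau$. So one crossing $c_0$ lies on the axis, between two strands $a,\tau a$ that $\tau$ swaps. The other two crossings $c_1$ and $c_2=\tau c_1$ lie on the $\tau$-invariant strand, which is forced to be the middle one. The unique resolution containing the triangle circle is $\tau$-invariant and the circle is symmetric, so delooping gives two $\tau$-invariant copies.

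One copy is joined by an isomorphism to the neighbour obtained by changing $c_0$. That neighbour is $\tau$-invariant, so this cancellation is equivariant. But the isomorphisms involving the other copy go only to the two resolutions obtained by changing $c_1$ or $c_2$, which $\tau$ interchanges. This is precisely the configuration with exactly one $\tau$-invariant end, which admits no equivariant elimination. No larger $\tau$-stable invertible block is available either. Bar-Natan's R3 argument, which must resolve a crossing on the middle strand, cannot be run symmetrically for the same reason.

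If you perform the admissible cancellations on both sides of the move, you are left with $\tau$-matched complexes whose underlying objects already differ: the crossingless tangle obtained by deleting the triangle circle is different on the two sides. So no finite check of the kind you describe closes the argument.

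Strict equivariance is also the wrong target over $\FF_2$, where you cannot average. In your free case, $h\otimes 1+gf\otimes\tau h$ and its $\tau$-conjugate $h\otimes\tau(gf)+1\otimes\tau h$ differ by $[d,h\otimes\tau h]$. This discrepancy has to be recorded, not symmetrized away.

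\textbf{The missing idea} is the homotopy-coherent notion of equivariance in which the theorem is actually stated. A $\tau$-map is a pair $(f,F)$ with $i'f-\tau(f)i=[d,F]$. A $\tau$-homotopy is a pair $(h,H)$ with a degree $-2$ correction $H$ satisfying $F-G=[d,H]+i'h-\tau(h)i$. With this, no symmetric cancellation is needed:
\begin{enumerate}
\item For each involutive Reidemeister move, take Bar-Natan's non-equivariant $f,g,h,h'$ on the minimal move diagram.
\item $\Kh$-simplicity of these diagrams gives $F\colon f\htpy\tau(f)$ and $G\colon g\htpy\tau(g)$, with no sign ambiguity over $\FF_2$.
\item Both $Gf+\tau(g)F$ and $h-\tau(h)$ have boundary $gf-\tau(gf)$. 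So $\Kh$-simplicity again yields $H$ with $Gf+\tau(g)F-(h-\tau(h))=[d,H]$, and similarly $H'$.
\end{enumerate}
Then $(f,F)$ and $(g,G)$ are mutually inverse up to $(h,H)$ and $(h',H')$, uniformly for all on-axis moves. Your free-move construction becomes correct once you take $H=h\otimes\tau h$ as the second component (compare item (4) of \Cref{prop:sym-decomp-maps}).

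Your third step does not work as planned either. The $I$-move is the extra move that has to be added to the involutive Reidemeister moves when passing from $\RR^3$ to $S^3$. So you cannot expect to realize it by equivariant Reidemeister equivalences, and comparing along a non-equivariant sequence gives no control over $i$. The paper instead uses that the resolutions of $L$ and $L'$ correspond one-to-one. This gives an isomorphism $\varphi\colon[L]\to[L']$ that is strictly $\tau$-equivariant. It is composed with the dot swap $\sigma$, which commutes with $\tau$ because it is the same local cobordism on every circle, only so that the equivariant Lee cycles still correspond. No relaxation of $i$ is needed.
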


Here, the \textit{involutive Reidemeister moves} (see \Cref{fig:inv-reidemeister}), introduced by Lobb and Watson \cite{LW:2021}, are the equivariant analogues of the ordinary Reidemeister moves, and the $I$-move (see \Cref{fig:I-move}), introduced by Borodzik, Dai, Mallick and Stoffregen \cite{BDM+:2026b}, is a non-local move that is additionally required when the ambient space is $S^3$ rather than $\RR^3$. \Cref{thm:inv-tang-invariance} implies that the chain homotopy type of $[T]^\tau$ is an invariant of the involutive tangle. In particular, when $B = \emptyset$, $T = L$ is an involutive link diagram. In this case, there is an induced involution $I_\tau$ on the complex $[L]$. We define 
\[
    \cal{Q}(L) := \Cone([L] \xrightarrow{I - I_\tau} [L])
\]
which is a complex in the (non-$\tau$-equivariant) category $\Kob_{\bullet/l}(\emptyset)$, and call it the \textit{formal involutive Khovanov complex} of $L$. We shall prove that the correspondence $[L]^\tau \mapsto \cal{Q}(L)$ preserves chain homotopy equivalences, and hence obtain 

\begin{maintheorem}
\label{thm:inv-kh-invariance}
    For an involutive link diagram $L$, the chain homotopy type of the formal involutive Khovanov complex $\cal{Q}(L)$ is invariant under the involutive Reidemeister moves and the $I$-move. 
\end{maintheorem}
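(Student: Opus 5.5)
The plan is to deduce the statement from \Cref{thm:inv-tang-invariance} in the case $B = \emptyset$. We show that the assignment $[L]^\tau \mapsto \cal{Q}(L)$ extends to a functor from $\tau$-equivariant complexes over $\emptyset$ to $\Kob_{\bullet/l}(\emptyset)$ that sends $\tau$-equivariant chain homotopy equivalences to chain homotopy equivalences. Since \Cref{thm:inv-tang-invariance} says that involutive Reidemeister moves and the $I$-move change $[L]^\tau$ only by such equivalences, the result follows.

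First, I will make the passage from $([L], i)$ to $([L], I_\tau)$ precise. When $B = \emptyset$, the rotation $\tau$ acts on closed cobordisms and on $\Kob_{\bullet/l}(\emptyset)$ by a functor $\tau$ with $\tau^2 = \id$. The involution $I_\tau$ is then the composite of $i\colon [L] \to \tau[L]$ with the canonical identification of $\tau[L]$ with $[L]$ as an object of the empty-boundary category; the identification is available because closed components are identified up to isotopy modulo the local relations $/l$. I will check that $I_\tau^2 = I$ using $\tau(i) = i^{-1}$.

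Next, I will take a $\tau$-equivariant morphism $f\colon ([C], i) \to ([C'], i')$, meaning a chain map with $i' f = \tau(f)\, i$, possibly only up to a specified homotopy. This gives $I'_\tau f = f I_\tau$, up to homotopy in the weaker case. If the relation holds on the nose, $f \oplus f$ induces a chain map $\Cone(I - I_\tau) \to \Cone(I - I'_\tau)$. If it holds only up to a homotopy $h$, the induced map is $\begin{pmatrix} f & 0 \\ h & f \end{pmatrix}$. This is the standard functoriality of mapping cones over homotopy-commutative squares.

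The main obstacle is showing that homotopy equivalences go to homotopy equivalences. I must determine which notion of morphism and homotopy in $\Kob^\tau_{\bullet/l}$ the invariance in \Cref{thm:inv-tang-invariance} provides. If equivariant homotopies $H$ satisfy $i' H = \tau(H)\, i$ strictly, then everything commutes on the nose and the cone construction preserves homotopies directly. If the relations hold only up to higher coherences, I will use a five-lemma style argument instead. The cone map fits into a map of exact triangles whose two outer components are $f$, and $f$ is a homotopy equivalence. In the additive category $\Kom(\Mat(\Cob_{/l}(\emptyset)))$, which over $\emptyset$ is equivalent to complexes of graded $\FF_2$-vector spaces, a morphism of cones whose outer components are homotopy equivalences is itself a homotopy equivalence. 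This is the triangulated-category five lemma.

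Finally, I will apply this to the equivalences produced by \Cref{thm:inv-tang-invariance} for each involutive Reidemeister move and the $I$-move.
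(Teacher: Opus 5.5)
Your proposal is correct, and its skeleton matches the paper's. You define $I_\tau$ from $i$ and the cylinder identification $J_\tau$, check $I_\tau^2 = I$ using $\tau(i) = i^{-1}$, and send a $\tau$-map $(f,F)$ to the lower-triangular cone map with diagonal entries $f$ and off-diagonal entry $-\tilde F$, where $\tilde F = -i'^{-1}F i^{-1}J_\tau$. You then apply this to the equivalences of \Cref{thm:inv-tang-invariance}. You should state explicitly that turning $F$ into a homotopy $I_\tau f \htpy f I_\tau$ uses the naturality $\tau(f) = J_\tau f J_\tau^{-1}$.

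The difference is in the key step.

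\emph{The paper's route.} It shows that $\mathcal{Q}$ descends to a functor $h\Kob^\tau(\emptyset) \to h\Kob(\emptyset)$. It writes down an explicit homotopy $\Theta$ showing that compositions are preserved. It also uses the degree $-2$ component $H$ of a $\tau$-homotopy to build $\mathcal{Q}(h^\tau)$. A $\tau$-equivariant equivalence then goes to an equivalence with explicit inverse $\mathcal{Q}(g^\tau)$.

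\emph{Your route.} You apply the five lemma in the triangulated category $K(\Mat(\Cob(\emptyset)))$ to the degreewise split sequence $\overline{\Omega} \to \mathcal{Q}(\Omega^\tau) \to \underline{\Omega}$. The outer components are both $f$. The square involving the connecting map $I - I_\tau$ commutes only up to $\tilde F$, which is enough for a morphism of distinguished triangles.

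\emph{Comparison.} Your route needs strictly less input: only that the underlying $f$ is a homotopy equivalence and that some $F$ exists. So the data $G, H, H'$ obtained from $\Kh$-simplicity in \Cref{prop:ir-invariance} are not needed for this theorem. The paper's route buys genuine functoriality on homotopy categories and explicit homotopy inverses. That is what underlies, for example, the composite-of-functors description in \Cref{prop:recover-ckhi}.

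Because the five-lemma branch works unconditionally, you can drop the hedge about $\tau$-homotopies holding strictly. In the paper they are pairs $(h,H)$ and are not strict.

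One small correction: $\Kob(\emptyset)$ is not equivalent to complexes of graded $\FF_2$-vector spaces. Since $\Hom(\emptyset,\emptyset) \isom \FF_2[h,t]$, delooping identifies it with complexes of free graded $\FF_2[h,t]$-modules. This does not affect your argument, because the triangulated five lemma holds in $K(\mathcal{A})$ for any additive category $\mathcal{A}$.
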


After applying the TQFT $\cal{F}$ for ordinary Khovanov homology over $\FF_2$, we recover the involutive Khovanov complex $\CKhI(L)$ defined in \cite{Sano:2025b}, and its invariance follows for free. (\cite[Theorem 2]{Sano:2025b} did not account for the $I$-move. See \Cref{rem:I-move}.)

\subsection*{Symmetric tangle decomposition}

One advantage of this reformulation is that it allows a \textit{divide-and-conquer} computation for the involutive Khovanov homology $\KhI(L)$. Take a \textit{symmetric tangle decomposition} of $L$, that is, write $L = D(T, T', \tau T')$, where $T$ is an involutive tangle on the axis, $T'$ is an off-axis tangle with counterpart $\tau T'$, and $D$ is a symmetric planar arc diagram along which the three are glued (see \Cref{fig:planar-diagram} in \Cref{sec:reducing}). With such a description, we may apply equivariant versions of \textit{delooping} and \textit{Gaussian elimination}, originally introduced in \cite{BN:2007}, to the $\tau$-equivariant complex $[L]^\tau$. This provides an effective method for computing $\KhI(L)$, both by hand and by computer. 

This also lets us apply the \textit{diagrammatic approach to the Rasmussen invariant}, introduced in \cite{KimSano:2025}, in the involutive setting. As with the computation of the (ordinary) Rasmussen invariant for pretzel knots in \cite[Theorem 3]{KimSano:2025}, we obtain the following without computer assistance. 

\begin{maintheorem}
\label{thm:pretzel-equiv-s}
    For any odd positive integer $p \geq 3$, the strongly invertible pretzel knot $K = P(-p, p, -p)$ has 
    \[
        (\ds(K), \us(K)) = (0, 2).
    \]
\end{maintheorem}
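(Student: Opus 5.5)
We compute $(\ds(K), \us(K))$ using a symmetric tangle decomposition of $K = P(-p,p,-p)$ together with the diagrammatic approach to the Rasmussen invariant of \cite{KimSano:2025}, adapted to $\tau$-equivariant complexes.

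\textbf{Decomposition and tangle complexes.} We draw $P(-p,p,-p)$ so that the strong inversion $\tau$ fixes the middle twist region, which is on the axis, and exchanges the two outer regions of $-p$ half-twists. This gives $K = D(T, T', \tau T')$ with $T$ the involutive $p$-twist tangle and $T'$ the $(-p)$-twist tangle. For a twist tangle, the Bar-Natan complex after delooping and Gaussian elimination is known explicitly. It is a ``zigzag'' complex built from the two crossingless resolutions, with maps alternating between saddles and differences of dots (over $\FF_2$, sums $X_1 + X_2$). For $T$ we perform the delooping and Gaussian elimination equivariantly, so that the simplified complex comes with its involution $i$. We expect $\tau$ to act on each resolution of the twist tangle by the evident symmetry, so that the reduced complex of $T$ is $\tau$-equivariantly isomorphic to the standard zigzag with $i$ acting by the identity, up to swapping dot labels $X_1 \leftrightarrow X_2$. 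For $T'$ no equivariance is needed: we take its reduced complex $C'$ and set $\tau C'$ as the counterpart, so the involution on $[K]$ is the swap $C' \otimes \tau C' \to \tau C' \otimes C'$ tensored with $i_T$.

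\textbf{Canonical cycles.} Following \cite{KimSano:2025}, we locate the Lee/Bar-Natan canonical class $\ca$ of the glued complex, working over $\FF_2[H]$, by tracking the cycle living in the oriented resolution. We then determine its divisibility by $H$, which computes $s$. Here $s(K) = 0$ since $K$ is slice. For the equivariant invariants we need the corresponding statements in the cone $\cal{Q}(K)$. That is, we must find the lifts $\dca$ and $\uca$ of $\ca$ to $\cal{Q}(K)$, which are cycles of the form $(\ca, h)$ with $(I - I_\tau)h = 0$-type conditions, and compute their $H$-divisibility. The plan has two parts. First, we show that $I_\tau \ca$ and $\ca$ differ by a boundary $d\eta$. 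Second, we find an explicit $\eta$ inside the small glued complex whose $q$-degree forces one of the two lifts to be divisible by $H$ one extra time. This gives the $2$ in $\us$, while the other lift realises $\ds = s = 0$.

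\textbf{Main obstacle.} The hard step is computing the primitive $\eta$ with $d\eta = (I - I_\tau)\ca$ and showing that no better choice exists, which gives the upper bound $\us \le 2$. The equivariance constraint couples the two copies $C'$ and $\tau C'$, and the swap does not commute with the naive choice of homotopies. We would first try to reduce to the case $p = 3$ by an induction on $p$. The idea is that adding two half-twists to each twist region extends the zigzag complexes by a periodic block. We would show that this block contributes a $\tau$-equivariant acyclic summand that can be Gaussian-eliminated equivariantly without affecting $\ca$ or $\eta$. The remaining core complex is then independent of $p$, and for it $(\ds, \us) = (0,2)$ follows by direct inspection, as in the computation for $m(9_{46})$ in \cite{Sano:2025b}. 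For the lower bound $\us \ge 2$ we would produce the explicit cycle; for the upper bound we would use the general inequalities $\ds \le s \le \us$ together with a degree/parity argument in the reduced complex, which shows that the class cannot be divisible by a higher power of $H$.
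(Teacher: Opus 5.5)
\textbf{There is a genuine gap: the step that produces $\us = 2$ is missing, and the mechanism you propose in its place cannot work.}

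Your set-up agrees with the paper: the $p$-twist on the axis, the two $(-p)$-twists swapped by $\tau$, the zigzag complexes $E^\pm_q$, and the involution $R \circ D(i_s, I, I)$. The trouble starts with the primitive $\eta$. For the strongly invertible orientation the Lee cycle is strictly $\tau$-invariant, so $(I - I_\tau)\ca = 0$ on the nose. Hence $\dca = (\ca, 0)$ and $\uca = (0, \ca)$ are already cycles in $\cal{Q}(K)$. The primitive you call the main obstacle is just $\eta = 0$, and no $q$-degree property of such an $\eta$ can raise a divisibility.

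The extra freedom in the cone runs the other way. The new boundaries $D(x, 0) = (dx, (I - I_\tau)x)$ can cancel pieces of $\uca$ that are not boundaries in $\CKhI$'s underlying complex $[K]$. Concretely, for $p = 3$:
\begin{itemize}
\item After the equivariant reduction to $\Omega_0$, the cycle $z_0$ fails to be $h$-divisible. Delooping $\sf{A}(0,-1,0) \xrightarrow{a} \sf{A}(0,0,0)$ and cancelling the diagonal $I$ pushes it into the $\tau$-swapped pair $\sf{C}_0(3,-3,0)$, $\sf{C}'_0(0,-3,3)$.
\item In $\cal{Q}(\Omega_0)$ the paper cancels the vertical isomorphisms $\underline{\sf{C}'_0} \xrightarrow{I} \overline{\sf{C}'_0}$ at $(0,-3,3)$ and $(1,-3,3)$ (\Cref{prop:vertical-reduction}).
\item This replaces the edge $e\colon \overline{\sf{A}}(0,-1,0) \to \overline{\sf{C}_0}(3,-3,0)$ by $e - I_\tau e' = e(I - I_\tau)$. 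That vanishes because every circle of $\sf{A}$ is symmetric.
\item So $a$ becomes the only arrow out of $\overline{\sf{A}}(0,-1,0)$. Delooping and cancelling it leaves $\bar{z}_0$ as $h$ times a non-divisible cobordism into $\overline{\sf{A}_0^+}(0,0,0)$.
\end{itemize}
This symmetry-breaking cancellation of the swapped obstruction is the key idea, and your proposal does not contain it.

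The remaining steps also have holes.
\begin{itemize}
\item \textbf{Upper bound.} The inequality $s \le \us$ bounds $\us$ from below, and parity only says $\us$ is even. Nothing in your plan excludes $\us \ge 4$. In the paper, exactness comes from the same local computation: $\overline{\sf{A}_0^+}(0,0,0)$ receives no other arrows, so boundaries cannot change the single $h$ factor.
\item \textbf{$\ds$.} You also need $\ds \ge 0$. This comes from the strictly equivariant reduction sending $\dca \mapsto h^2 \underline{z}$, combined with $\ds \le s = 0$.
\item \textbf{Induction on $p$.} This is not sound as stated. The zigzags $E^\pm_q$ have no invertible arrows, so adding twists splits off no acyclic summand. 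Moreover $\det P(-p,p,-p) = p^2$ forces the Khovanov homology to grow with $p$. So the glued complexes for $p$ and $p+2$ are not homotopy equivalent, and no $p$-independent core can remain.
\item \textbf{What is actually true.} The paper's ``the general case proceeds similarly'' relies on the local configuration around $\sf{A}(0,0,0)$ and the swapped pair $\sf{C}_0$, $\sf{C}'_0$ having the same shape for every $p$, so the $p = 3$ argument repeats verbatim.
\item \textbf{Base case.} The earlier value for $m(9_{46})$ was obtained by computer. It can serve as the $p = 3$ data point, but it gives no template for the general case.
\end{itemize}
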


\subsection*{Direct computations}

The program \verb|yui| \cite{Sano:YUI} has been improved so that it now computes the involutive Khovanov homology and the equivariant Rasmussen invariant using the computation algorithm described above. The improvement is significant: the naive computation of $\KhI$ for the positron knot $17nh_{73}$ takes about 10 minutes on a local machine, whereas the new implementation finishes in 200 msec. This enables us to carry out more extensive computational experiments. 

In \cite{Lam:2025}, Lamm gives a table of symmetric diagrams for all strongly invertible knots up to 10 crossings. Moreover, Appendix B therein lists strongly invertible knots that have two equivalence classes of strong inversions, described in the form of doubly transvergent diagrams.\footnote{
    \cite[Appendix B]{Lam:2025} does not cover all knots that have two equivalence classes of strong inversions. Moreover, for some knots, the two axes of a doubly transvergent diagram give the same class. 
} 
We refer to these diagrams as the \textit{dataset of \cite{Lam:2025}}. Transforming these data into PD-codes, we performed direct computation using our program \verb|yui|. From \cite[Proposition 1.3]{Sano:2025b} we have 
\[
    (\ds(-K), \us(-K)) = (-\us(K), -\ds(K)),
\]
where $-K$ denotes the concordance inverse (mirror-reverse) of $K$, so it is enough to compute one from each mirror pair. For knots that have two strong inversions, such as $8_{21}$, we distinguish them as $8_{21a}$ and $8_{21b}$. We let $s$ denote the $\FF_2$-Rasmussen invariant. We say that the equivariant Rasmussen invariant of $K$ is \textit{anomalous} if $(\ds(K), \us(K)) \neq (s(K), s(K))$.

\begin{table}[t]
    \centering
    \caption{Knots with anomalous equivariant Rasmussen invariant}
    \label{tab:anomalous}
    \begin{tabular}[t]{lcc}
        \toprule
        $K$ & $(\ds, \us)$ & $s$ \\
        \midrule
        $8_{21b}$   & $(2, 4)$  & $2$ \\
        $9_{46a}$   & $(-2, 0)$ & $0$ \\
        $9_{46b}$   & $(-2, 0)$ & $0$ \\
        $10_{141b}$ & $(0, 2)$  & $0$ \\
        \bottomrule
    \end{tabular}
    \qquad
    \begin{tabular}[t]{lcc}
        \toprule
        $K$ & $(\ds, \us)$ & $s$ \\
        \midrule
        $10_{144b}$ & $(2, 4)$   & $2$ \\
        $10_{160}$  & $(2, 4)$   & $4$ \\
        $10_{162}$  & $(-4, -2)$ & $-2$ \\
        $10_{165}$  & $(-4, -2)$ & $-2$ \\
        \bottomrule
    \end{tabular}
\end{table}

\begin{proposition}
\label{prop:experiment1}
    Within the dataset of \cite{Lam:2025}, the knots listed in \Cref{tab:anomalous} are the only strongly invertible knots with anomalous equivariant Rasmussen invariant.
\end{proposition}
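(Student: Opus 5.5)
This is a computational statement, so the plan is an exhaustive, certified machine computation over a finite dataset rather than a conceptual argument.

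\emph{Preparing the input.} I would take every symmetric diagram in the dataset of \cite{Lam:2025}: the main table of strongly invertible knots up to 10 crossings, together with the doubly transvergent diagrams of Appendix B. For the latter, each of the two axes gives its own involutive diagram; these are labelled with suffixes $a$, $b$ as in $8_{21a}$, $8_{21b}$. Each diagram is converted into a PD-code, recording in addition the involution $\tau$ acting on crossings and edges.

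Two checks are needed at this stage:
\begin{itemize}
\item the PD-code is invariant under the prescribed action;
\item the underlying knot type is correct, for instance by checking that the ordinary Khovanov homology (or some polynomial invariant) matches the tabulated knot.
\end{itemize}
By \cite[Proposition 1.3]{Sano:2025b}, passing to $-K$ sends $(\ds, \us)$ to $(-\us, -\ds)$ and $s$ to $-s$. Anomaly is therefore preserved under this operation, so it suffices to treat one representative of each mirror pair.

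\emph{Computing the invariants.} For each diagram I would:
\begin{enumerate}
\item choose a symmetric tangle decomposition $L = D(T, T', \tau T')$;
\item build the $\tau$-equivariant complex $[L]^\tau$ crossing by crossing, applying equivariant delooping and Gaussian elimination at each stage;
\item form $\cal{Q}(L)$, apply the TQFT with the Lee/Bar-Natan deformation, and read off $(\ds, \us)$ as defined in \cite{Sano:2025b}, alongside the $\FF_2$-Rasmussen invariant $s$.
\end{enumerate}
By \Cref{thm:inv-tang-invariance} and \Cref{thm:inv-kh-invariance}, the output depends only on the involutive knot and not on the chosen diagram or decomposition. This justifies using whatever diagram Lamm provides.

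\emph{Reading off the result.} Running \verb|yui| on all entries, I would compare $(\ds, \us)$ with $(s, s)$ and collect the mismatches. The statement then amounts to checking that these mismatches are exactly the rows of \Cref{tab:anomalous}.

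\emph{Main obstacle: reliability of the data and the code.} The mathematics is straightforward; the risk lies in transcription and implementation. To guard against errors I would:
\begin{itemize}
\item cross-check the known cases $m(9_{46})$, $15n_{103488}$ and $17nh_{73}$ against the naive implementation from \cite{Sano:2025b}, and confirm that $m(9_{46})$ reproduces $(-2,0)$ up to the mirror convention;
\item verify the symmetry relation above on a few knots by computing both $K$ and $-K$;
\item verify, for knots in \Cref{tab:anomalous} that admit two inequivalent transvergent diagrams, that different decompositions give the same answer;
\item confirm $\ds \le s \le \us$ in every case as a consistency check.
\end{itemize}
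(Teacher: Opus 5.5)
Your plan is the same as the paper's: the proposition is established by converting the symmetric diagrams of \cite{Lam:2025} to PD-codes and running the program \texttt{yui}, which implements the equivariant reduction of \Cref{alg:inv-kh-reduction} and the linear-system computation of $h$-divisibility from \Cref{subsec:compute-s}. As in the paper, you use $(\ds(-K), \us(-K)) = (-\us(K), -\ds(K))$ to treat one knot per mirror pair and collect the cases with $(\ds, \us) \neq (s, s)$.

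Your extra sanity checks go beyond what the paper reports and are sound; in particular $\ds \le s \le \us$ follows from the projection and inclusion maps of the cone. The one exception is the diagram-independence check, which should only compare diagrams of the same involution, since inequivalent strong inversions such as $8_{21a}$ and $8_{21b}$ genuinely give different values.
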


Next, we consider strongly invertible \textit{slice} knots. For a knot $K$ with two strong inversions $K_a = (K, \tau_a)$ and $K_b = (K, \tau_b)$, we obtain a strongly invertible slice knot by the equivariant connected sum 
\[
    K_a \# -K_b := (K \# -K, \tau_a \# -\tau_b).
\]

\begin{proposition}
\label{prop:experiment2}
    Among the knots in the dataset of \cite{Lam:2025} that have two strong inversions, the following are the only $K_a \# -K_b$ having non-trivial equivariant Rasmussen invariant, all equal to $(-2, 0)$.
    \[
        8_{21a} \# -8_{21b},\quad 
        10_{141a} \# -10_{141b},\quad
        10_{144a} \# -10_{144b}.
    \]
\end{proposition}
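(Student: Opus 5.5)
This statement is established by direct computer computation, so the plan is to describe the computational pipeline and the reductions that make it finite and reliable. First, for each knot $K$ in the dataset of \cite{Lam:2025} with two listed strong inversions, I will convert the doubly transvergent diagram into two involutive PD-codes, one for each axis, giving $K_a=(K,\tau_a)$ and $K_b=(K,\tau_b)$. I will discard the cases flagged in the footnote where both axes give the same class; there $K_a\#-K_b$ is equivariantly isotopic to $K_a\#-K_a$, which is equivariantly slice, so the invariant is $(0,0)$. Next I will form the equivariant connected sum $K_a\#-K_b$ diagrammatically. Concretely, I take the mirror of the diagram of $K_b$ with its orientation reversed, and band the two diagrams together along a $\tau$-invariant arc meeting the axis at fixed points. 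The result is an involutive knot diagram whose involution is $\tau_a\#-\tau_b$.

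For each such diagram I will compute $\KhI$ and $(\ds,\us)$ with \verb|yui|, using the divide-and-conquer algorithm of this paper. I choose a symmetric tangle decomposition $D(T,T',\tau T')$, with the connected-sum region and the on-axis crossings in $T$. I then simplify $[T']$ by ordinary delooping and Gaussian elimination, transport the result to $\tau T'$ by $\tau$, and apply the equivariant delooping and Gaussian elimination on the glued complex. By \Cref{thm:inv-tang-invariance} and \Cref{thm:inv-kh-invariance}, the resulting $\cal{Q}(L)$ has the correct homotopy type, so the equivariant Rasmussen invariants read off from it, as defined in \cite{Sano:2025b}, are correct. The outcome I expect is $(-2,0)$ for the three listed sums and $(0,0)$ for all others.

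As consistency checks, I will use the identity $(\ds(-K),\us(-K))=(-\us(K),-\ds(K))$. Since $-(K_a\#-K_b)=K_b\#-K_a$, computing both orders must give negated, swapped pairs. I will also confirm that $s=0$ in every case, since the sums are slice. \Cref{tab:anomalous} gives a further sanity check: the anomalous factors $8_{21b}$, $10_{141b}$ and $10_{144b}$ are exactly the ones that produce non-trivial sums.

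The main obstacle is ensuring that the input diagrams really realise the intended involutions. In particular, the orientation reversal in $-K_b$ and the choice of band must be compatible with $\tau$, because the invariant can depend on these choices through equivariant concordance class. I would check this by verifying that $\tau$ maps the PD-code to itself, and by testing the pipeline on $K_a\#-K_a$, where the answer must be $(0,0)$.
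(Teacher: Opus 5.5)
Your plan matches how the paper establishes the proposition: it is simply the recorded output of a direct \texttt{yui} computation (\Cref{alg:inv-kh-reduction} followed by the linear-system method of \Cref{subsec:compute-s}) on PD-codes built from Lamm's doubly transvergent diagrams, with no further argument given. One caution about your auxiliary claims: the equivariant connected sum depends on which fixed point of each factor is glued. The crossing-changed diagram and the reflection across a line perpendicular to the axis both represent $-K$, but a $\tau$-commuting rotation exchanges them and swaps the two fixed points. Consequently ``same class $\Rightarrow (0,0)$'', ``$K_a\#-K_a$ must give $(0,0)$'' and ``$K_b\#-K_a$ must give the negated, swapped pair'' hold only when the glued fixed points correspond. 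The comparison with \Cref{tab:anomalous} is purely heuristic ($9_{46}$ has two anomalous inversions but a trivial sum), so you should compute every entry rather than arguing cases away.
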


There is another interesting way of producing a strongly invertible slice knot which appears in \cite{DMS:2023,GHK+:2023,DMT:2026}. For a strongly invertible knot $(K, \tau)$, consider the slice knot
\[
    K \# K \# (-K \# -K)
\]
equipped with a strong inversion
\[
    \tau \# \tau \# (-\tau_f)
\]
where $\tau_f$ is the flipping involution on $K \# K$ that interchanges the two factors (see \Cref{fig:flip-const}). We call this the \textit{DMS construction}. 

\begin{figure}[h]
    \begin{center}
        \tikzset{every picture/.style={line width=0.75pt}} 

\begin{tikzpicture}[x=0.75pt,y=0.75pt,yscale=-1,xscale=1]

\draw [color={rgb, 255:red, 155; green, 155; blue, 155 }  ,draw opacity=1 ][line width=0.75]  [dash pattern={on 0.84pt off 2.51pt}]  (205,2) -- (205,156.75) ;
\draw  [fill={rgb, 255:red, 255; green, 255; blue, 255 }  ,fill opacity=1 ] (183.5,20.6) -- (227.25,20.6) -- (227.25,37.77) -- (183.5,37.77) -- cycle ;

\draw  [draw opacity=0] (192.48,20.49) .. controls (192.55,13.58) and (198.29,8) .. (205.38,8) .. controls (212.45,8) and (218.19,13.57) .. (218.26,20.48) -- (205.38,20.6) -- cycle ; \draw   (192.48,20.49) .. controls (192.55,13.58) and (198.29,8) .. (205.38,8) .. controls (212.45,8) and (218.19,13.57) .. (218.26,20.48) ;  
\draw  [fill={rgb, 255:red, 255; green, 255; blue, 255 }  ,fill opacity=1 ] (181.5,59.35) -- (227.75,59.35) -- (227.75,77.9) -- (181.5,77.9) -- cycle ;

\draw  [fill={rgb, 255:red, 255; green, 255; blue, 255 }  ,fill opacity=1 ] (165.16,145.25) -- (165.16,103) -- (183.42,103) -- (183.42,145.25) -- cycle ;

\draw  [draw opacity=0] (165.09,132.75) .. controls (157.71,132.72) and (151.75,128.87) .. (151.75,124.13) .. controls (151.75,119.38) and (157.71,115.53) .. (165.08,115.5) -- (165.16,124.13) -- cycle ; \draw   (165.09,132.75) .. controls (157.71,132.72) and (151.75,128.87) .. (151.75,124.13) .. controls (151.75,119.38) and (157.71,115.53) .. (165.08,115.5) ;  
\draw  [fill={rgb, 255:red, 255; green, 255; blue, 255 }  ,fill opacity=1 ] (243.54,103) -- (243.54,145.25) -- (225.55,145.25) -- (225.55,103) -- cycle ;

\draw  [draw opacity=0] (243.61,115.5) .. controls (250.88,115.53) and (256.75,119.38) .. (256.75,124.13) .. controls (256.75,128.87) and (250.88,132.72) .. (243.63,132.75) -- (243.54,124.13) -- cycle ; \draw   (243.61,115.5) .. controls (250.88,115.53) and (256.75,119.38) .. (256.75,124.13) .. controls (256.75,128.87) and (250.88,132.72) .. (243.63,132.75) ;  
\draw    (184.5,133) -- (224.5,133) ;
\draw    (184.5,115.74) .. controls (198.75,116.06) and (193.75,101.59) .. (194.5,78) ;
\draw    (224.5,115.74) .. controls (213.75,115.43) and (214.25,94.04) .. (214.5,78) ;

\draw    (194.5,59) -- (194.5,38) ;
\draw    (215,59) -- (215,38) ;
\draw    (202.25,133.15) -- (205.75,133.15) ;
\draw [shift={(208.75,133.15)}, rotate = 180] [fill={rgb, 255:red, 0; green, 0; blue, 0 }  ][line width=0.08]  [draw opacity=0] (7.14,-3.43) -- (0,0) -- (7.14,3.43) -- (4.74,0) -- cycle    ;
\draw    (207.75,8.15) -- (204.75,8.15) ;
\draw [shift={(201.75,8.15)}, rotate = 360] [fill={rgb, 255:red, 0; green, 0; blue, 0 }  ][line width=0.08]  [draw opacity=0] (7.14,-3.43) -- (0,0) -- (7.14,3.43) -- (4.74,0) -- cycle    ;
\draw    (214.25,89.65) -- (214.25,87.75) ;
\draw [shift={(214.25,84.75)}, rotate = 90] [fill={rgb, 255:red, 0; green, 0; blue, 0 }  ][line width=0.08]  [draw opacity=0] (7.14,-3.43) -- (0,0) -- (7.14,3.43) -- (4.74,0) -- cycle    ;
\draw    (194.25,88.25) -- (194.25,89.75) ;
\draw [shift={(194.25,92.75)}, rotate = 270] [fill={rgb, 255:red, 0; green, 0; blue, 0 }  ][line width=0.08]  [draw opacity=0] (7.14,-3.43) -- (0,0) -- (7.14,3.43) -- (4.74,0) -- cycle    ;
\draw    (214.75,49.65) -- (214.75,47.75) ;
\draw [shift={(214.75,44.75)}, rotate = 90] [fill={rgb, 255:red, 0; green, 0; blue, 0 }  ][line width=0.08]  [draw opacity=0] (7.14,-3.43) -- (0,0) -- (7.14,3.43) -- (4.74,0) -- cycle    ;
\draw    (194.75,48.25) -- (194.75,49.75) ;
\draw [shift={(194.75,52.75)}, rotate = 270] [fill={rgb, 255:red, 0; green, 0; blue, 0 }  ][line width=0.08]  [draw opacity=0] (7.14,-3.43) -- (0,0) -- (7.14,3.43) -- (4.74,0) -- cycle    ;

\draw [color={rgb, 255:red, 155; green, 155; blue, 155 }  ,draw opacity=1 ][line width=0.75]  [dash pattern={on 0.84pt off 2.51pt}]  (48.5,19.62) -- (48.5,131.38) ;
\draw  [fill={rgb, 255:red, 255; green, 255; blue, 255 }  ,fill opacity=1 ] (13.5,55.5) -- (83.5,55.5) -- (83.5,95.5) -- (13.5,95.5) -- cycle ;

\draw  [draw opacity=0] (27.88,55.32) .. controls (27.99,45.88) and (37.18,38.25) .. (48.5,38.25) .. controls (59.81,38.25) and (68.99,45.86) .. (69.12,55.29) -- (48.5,55.5) -- cycle ; \draw   (27.88,55.32) .. controls (27.99,45.88) and (37.18,38.25) .. (48.5,38.25) .. controls (59.81,38.25) and (68.99,45.86) .. (69.12,55.29) ;  
\draw    (51.25,38.32) -- (47.25,38.32) ;
\draw [shift={(44.25,38.32)}, rotate = 360] [fill={rgb, 255:red, 0; green, 0; blue, 0 }  ][line width=0.08]  [draw opacity=0] (7.14,-3.43) -- (0,0) -- (7.14,3.43) -- (4.74,0) -- cycle    ;

\draw  [draw opacity=0] (69.12,95.68) .. controls (69.02,105.67) and (59.82,113.75) .. (48.5,113.75) .. controls (37.19,113.75) and (28,105.69) .. (27.88,95.71) -- (48.5,95.5) -- cycle ; \draw   (69.12,95.68) .. controls (69.02,105.67) and (59.82,113.75) .. (48.5,113.75) .. controls (37.19,113.75) and (28,105.69) .. (27.88,95.71) ;  
\draw    (45.25,113.67) -- (48.75,113.67) ;
\draw [shift={(51.75,113.67)}, rotate = 180] [fill={rgb, 255:red, 0; green, 0; blue, 0 }  ][line width=0.08]  [draw opacity=0] (7.14,-3.43) -- (0,0) -- (7.14,3.43) -- (4.74,0) -- cycle    ;

\draw  [color={rgb, 255:red, 155; green, 155; blue, 155 }  ,draw opacity=1 ] (118.25,70.38) -- (129.35,70.38) -- (129.35,63.75) -- (136.75,77) -- (129.35,90.25) -- (129.35,83.63) -- (118.25,83.63) -- cycle ;

\draw (48.5,75.5) node    {$K$};
\draw (205.38,29.19) node    {$K$};
\draw (204.63,68.63) node    {$K$};
\draw (174.29,124.13) node  [rotate=-270]  {$-K$};
\draw (234.55,124.13) node  [rotate=-90]  {$-K$};

\end{tikzpicture}
    \end{center}
    \caption{DMS construction}
    \label{fig:flip-const}
\end{figure}

\begin{proposition}
\label{prop:experiment3}
    Within the dataset of \cite{Lam:2025}, the eight strongly invertible knots listed in \Cref{tab:anomalous} are the only ones for which the corresponding DMS construction gives a strongly invertible slice knot with non-trivial equivariant Rasmussen invariant; these invariants are each equal to either $(-2, 0)$ or $(0, 2)$.
\end{proposition}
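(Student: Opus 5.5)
This is a computational statement, so the plan is to run the improved \verb|yui| program over the whole relevant part of the dataset of \cite{Lam:2025} and then check the output against some structural constraints.

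\textbf{Building the input diagrams.} For each strongly invertible knot $(K,\tau)$ in the dataset, I first construct a symmetric diagram of the DMS construction $K \# K \# (-K \# -K)$ with inversion $\tau \# \tau \# (-\tau_f)$, following \Cref{fig:flip-const}.
\begin{enumerate}[label=(\roman*)]
    \item Place one copy of $K$ on the axis with its transvergent diagram.
    \item Add a second on-axis copy of $K$, connected equivariantly at an axis point, so that $\tau\#\tau$ is visible.
    \item Attach $-K\#-K$ as a pair of off-axis copies $-K$ and $\tau(-K)$, swapped by the rotation; this realizes $-\tau_f$.
\end{enumerate}
This gives a PD-code together with the involution on crossings and arcs. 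It also gives a symmetric tangle decomposition $L = D(T, T', \tau T')$ in the sense of \Cref{sec:reducing}, with $T$ the on-axis part $K\#K$ and $T'$ one copy of $-K$.

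\textbf{Computing the invariants.} I then compute $\KhI$ using equivariant delooping and Gaussian elimination on $[L]^\tau$. The off-axis tangle $T'$ is simplified once, and the result is reused for $\tau T'$. I extract $(\ds,\us)$ as in \cite{Sano:2025b}. By \Cref{thm:inv-tang-invariance} and \Cref{thm:inv-kh-invariance}, the answer is independent of the symmetric diagram chosen, so any convenient diagram may be used.

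\textbf{Consistency checks and the shape of the answer.} Since the DMS knot is slice, $s=0$. The invariants must obstruct isotopy-equivariant sliceness, and they must respect the mirror relation $(\ds(-K),\us(-K)) = (-\us(K),-\ds(K))$. Mirroring $K$ sends the DMS knot to its concordance inverse, up to equivariant equivalence. So I only need to compute one knot from each mirror pair and deduce the other. The expected pattern is a dichotomy:
\begin{itemize}
    \item If $(\ds,\us)(K)$ is non-anomalous, I expect the DMS knot to have $(0,0)$. Heuristically, $\KhI$ of the summands then looks like that of the trivial equivariant complex.
    \item For the eight knots in \Cref{tab:anomalous}, I expect $(0,2)$ or $(-2,0)$. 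Which sign appears should be governed by whether the anomaly sits in $\us$ or in $\ds$.
\end{itemize}
This heuristic is not used as a proof. It only serves as a sanity check on the computed table, and any disagreement would prompt rechecking the diagram encoding.

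\textbf{Main obstacle.} The hardest step is correctly and symmetrically encoding the DMS diagram, with its involution, from the transvergent diagrams of \cite{Lam:2025}. The crossing number doubles twice, to roughly $40$ crossings for $10$-crossing $K$, and the flip $\tau_f$ must be realized exactly. I would verify each encoding in three ways:
\begin{itemize}
    \item recovering $\Kh$ and $s=0$;
    \item checking that $\tau$ is an involution on the PD-code;
    \item for the knots in \Cref{tab:anomalous}, cross-checking with the known value $(0,2)$ for $m(9_{46})$-type examples.
\end{itemize}
Feasibility at this size relies on the divide-and-conquer speedup described above.
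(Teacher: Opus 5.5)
Your proposal matches the paper's approach: the proposition is established purely by direct computation with \texttt{yui} on PD-codes built from the symmetric diagrams of \cite{Lam:2025}, here for the DMS knots decomposed as an on-axis $K\#K$ together with the off-axis pair $-K$, $\tau(-K)$. The computation uses the equivariant reduction of the symmetric tangle decomposition, and the mirror relation $(\ds(-K),\us(-K))=(-\us(K),-\ds(K))$ means only one member of each mirror pair needs to be computed. Your heuristics and cross-checks are not part of the argument; note also that the known value $(0,2)$ from \cite{Sano:2025b} is for $m(9_{46})$ itself, not its DMS knot, so it can only validate the single-knot encoding.
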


\begin{question}
    For a strongly invertible knot $K$ with anomalous equivariant Rasmussen invariant, does the corresponding DMS construction give a strongly invertible slice knot with non-trivial equivariant Rasmussen invariant?
\end{question}

Finally, we consider Whitehead doubles. Since applying $\Wh$ to a diagram at least quadruples the number of crossings, computations are out of reach for most of the strongly invertible slice knots obtained above with non-trivial equivariant Rasmussen invariant. Nonetheless, we managed to obtain the following:

\begin{proposition}
\label{prop:experiment4}
    The Whitehead doubles of the (slice) pretzel knots $P(-3, 3, -3)$ and $P(-5, 5, -5)$ have non-trivial equivariant Rasmussen invariant, both equal to $(0, 2)$.
\end{proposition}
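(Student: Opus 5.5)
This statement is a computational result, so the plan is to set up the input data carefully, run the divide-and-conquer algorithm, and cross-check the output wherever possible. First we fix a symmetric diagram. We take the standard diagram of $P(-p,p,-p)$, $p = 3, 5$, in which the strong inversion $\tau$ is the $180$-degree rotation about an axis passing vertically through the middle twist region. In this diagram the middle column of $p$ crossings meets the axis, and the two outer columns of $-p$ crossings are exchanged by $\tau$.

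Next we form the positive untwisted Whitehead double equivariantly. Take the $\tau$-invariant $2$-cable with blackboard framing. Add full twists to correct the framing to $0$; since the writhe is preserved by $\tau$, these twists can be split symmetrically between the two $\tau$-exchanged sides, or placed on the axis, with the parity checked. Finally, insert the positive clasp on the axis, as a $\tau$-invariant tangle, so that the resulting diagram $\Wh(K)$ is an involutive knot diagram with $\tau$ reversing orientation. As a sanity check, we confirm that the resulting PD-code has the correct non-equivariant invariants. For instance, $s(\Wh(K)) = 0$, which must hold because $\Wh(K)$ is slice, and $\Kh$ should agree with known tables or an independent non-equivariant computation.

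Then we write the diagram as a symmetric tangle decomposition $D(T, T', \tau T')$. Here $T$ is the involutive tangle containing the clasp and the doubled middle twist region, and $T'$ and $\tau T'$ are the doubled outer twist regions. We apply the equivariant delooping and Gaussian elimination to $[T]^\tau$, which is legitimate by \Cref{thm:inv-tang-invariance}. Independently of that, we simplify $[T']$ by the ordinary algorithm and transport the result to $\tau T'$ by applying $\tau$. We then glue along $D$ and continue the reduction, performing eliminations only along $\tau$-symmetric pairs or on isomorphisms fixed by $i$, so that the $\tau$-equivariant structure survives. At the end we pass to $\cal{Q}$, justified by \Cref{thm:inv-kh-invariance}, apply the TQFT with the Bar-Natan/Lee-type deformation used to define $(\ds,\us)$, and read off the two invariants from the filtration degrees of the equivariant canonical classes, following the definitions in \cite{Sano:2025b}. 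This is exactly what the improved \verb|yui| implements.

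The main obstacle is feasibility together with correctness of the symmetric input. The doubled diagram of $P(-5,5,-5)$ has well over $60$ crossings, so the order in which crossings are added within each tangle must keep the intermediate complexes small. We would first try adding crossings column by column from the axis outward, and keep the number of boundary points of each partial tangle minimal. To guard against an error in the input, we would run the computation on two different symmetric diagrams, for example with the framing twists placed differently. We also check compatibility with $-K$ using $(\ds(-K),\us(-K)) = (-\us(K),-\ds(K))$: computing $\Wh(K)$ and the reversed mirror of this diagram, we verify that the results are $(0,2)$ and $(-2,0)$ respectively.
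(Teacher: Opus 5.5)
Your proposal is correct and follows essentially the paper's approach: the proposition is established by direct computation with \texttt{yui}, running \Cref{alg:inv-kh-reduction} on a symmetric diagram of $\Wh(K)$ (72 crossings for $p=5$) and extracting the $h$-divisibility of the images of the equivariant Lee cycles, which by \Cref{cor:h-div-by-linear-sys} is the same as your filtration-level reading at $h=1$. Two small corrections: the equivariant reductions are justified by \Cref{prop:equiv-deloop,prop:equiv-gauss-elim} rather than \Cref{thm:inv-tang-invariance}, and eliminating a symmetric pair $X \to Y$, $\tau X \to \tau Y$ also requires the cross component $X \to \tau Y$ to vanish.
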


Note that for a general strongly invertible knot $K$, there are two ways of forming its Whitehead double corresponding to the two fixed points of $\tau$ on $K$ (see \cite[Figure 4]{DMT:2026}); but for the two pretzel knots, they are equivalent. The diagram of $\Wh(P(-5, 5, -5))$ has $72$ crossings, and the computation for the equivariant Rasmussen invariant was performed on a 64-core machine with 3 TB of memory, finishing in 17 hours. 

\Cref{thm:wh-pretzel-exotic} follows by combining \Cref{thm:pretzel-equiv-s,prop:experiment4} with previously known results. First, a \textit{$\ZZ$-slice disk} $D \subset D^4$ for a knot $K \subset S^3$ is a slice disk such that $\pi_1(D^4 \setminus D) \isom \ZZ$, and a knot is \textit{$\ZZ$-slice} if it possesses a $\ZZ$-slice disk. From \cite[Proposition 2.1]{GHK+:2023}, if $D$ is a slice disk of a knot $K$, then the doubled disk $\Wh(D)$ is a $\ZZ$-slice disk of the Whitehead double $\Wh(K)$; and from \cite[Theorem 1.2]{CP:2021} and \cite[Theorem 2.2]{Hay:2021}, any two $\ZZ$-slice disks are topologically isotopic rel boundary. This completes the proof of \Cref{thm:wh-pretzel-exotic}. 

\begin{question}
    For every pretzel knot $P(-p, p, -p)$ with $p \geq 3$ odd, does its Whitehead double have non-trivial equivariant Rasmussen invariant? 
\end{question}

\begin{question}
\label{q:wh-ssi}
    More strongly, if the equivariant Rasmussen invariant of a strongly invertible slice knot $K$ is non-trivial, is that of $\Wh(K)$ also non-trivial?
\end{question}

If \Cref{q:wh-ssi} has a positive answer, then every strongly invertible slice knot with non-trivial equivariant Rasmussen invariant gives further evidence supporting \Cref{q:wh-slice-disks}. 

\subsection*{Future directions}

In the above examples, every non-trivial equivariant Rasmussen invariant of a strongly invertible slice knot was either $(-2, 0)$ or $(0, 2)$. One may ask whether there is an example $K$ with $|\ds(K)|$ or $|\us(K)|$ greater than $2$. The answer is positive; indeed, the strongly invertible knots appearing in \cite{Hay:2023} provide such examples. This direction will be pursued in future work. 

\subsection*{Organization} 

The paper is organized as follows. In \Cref{sec:kh-tangles}, we review Bar-Natan's formulation of Khovanov homology for tangles, together with the delooping and Gaussian elimination techniques. In \Cref{sec:inv-kh}, we introduce the $\tau$-equivariant Khovanov complex for involutive tangles, prove its invariance under the involutive Reidemeister moves and the $I$-move (\Cref{thm:inv-tang-invariance}), and construct the involutive Khovanov complex for involutive links (\Cref{thm:inv-kh-invariance}), relating it with the construction of \cite{Sano:2025b}. In \Cref{sec:reducing}, we develop the equivariant versions of delooping and Gaussian elimination, and describe the divide-and-conquer procedure that computes the involutive Khovanov homology from a symmetric tangle decomposition. In \Cref{sec:equiv-rasmussen}, we review the definition of the equivariant Rasmussen invariant via the equivariant Lee class, and explain how it can be computed effectively from a linear system. Finally, in \Cref{sec:pen-paper-computation}, we prove \Cref{thm:pretzel-equiv-s} by illustrating the method through explicit diagrammatic computations for pretzel knots.
    
\subsection*{AI declaration} 

In the preparation of this paper, generative AI was used only to improve the English writing and to check for errors; the author takes full responsibility for all mathematical content. In the development of the program \verb|yui| \cite{Sano:YUI}, Claude Code (Opus and Fable) was used extensively to improve the computational efficiency and to carry out the computational experiments; the author has reviewed all of the generated code and takes full responsibility for its correctness. 
    \subsection*{Acknowledgements}
The author thanks Masaki Taniguchi for the helpful discussions. 
The author was supported by JSPS KAKENHI Grant Numbers 23K12982, RIKEN iTHEMS Program and academist crowdfunding.

    \section{Khovanov homology for tangles}
\label{sec:kh-tangles}

First, we briefly review Bar-Natan's reformulation of Khovanov homology for tangles \cite{BN:2005}. 

\subsection{Dotted cobordisms}

For any set $B$ of an even number of points on the boundary $\del D^2$ of the unit disk $D^2$, the preadditive category of \textit{dotted cobordisms} $\Cob^3_\bullet(B)$ is defined as follows: an object of $\Cob^3_\bullet(B)$ is an unoriented compact $1$-manifold $X$ properly embedded in $D^2$ with $\partial X = B$. A morphism $f: X \rightarrow Y$ in $\Cob^3_\bullet(B)$ is a $\ZZ$-linear combination of boundary-fixing isotopy classes of dotted cobordisms from $X$ to $Y$, where a \textit{dotted cobordism} is an unoriented compact $2$-manifold $S$ properly embedded in $D^2 \times I$, possibly carrying dots on some of its components, with boundary $\del S = X \times \{0\} \cup B \times I \cup Y \times \{1\}$. The \textit{$q$-degree} of a dotted cobordism $S$ with $k$ dots is defined by
\[
    \qdeg S = \chi(S) - \frac{1}{2} |B| - 2k
\]
where $\chi$ denotes the Euler characteristic.
The following \textit{local relations} are imposed on the hom-sets of $\Cob^3_\bullet(B)$, and the resulting quotient category is denoted $\Cob^3_{\bullet/l}(B)$:
\begin{center}
    \input{tikzpictures/loc-relation-dot}
\end{center}
Let $h, t$ denote the following closed dotted cobordisms:
\begin{center}
    \tikzset{every picture/.style={line width=0.75pt}} 

\begin{tikzpicture}[x=0.75pt,y=0.75pt,yscale=-1,xscale=1]

\draw  [draw opacity=0] (89.94,40.75) .. controls (89.29,43) and (82.45,44.76) .. (74.11,44.76) .. controls (65.35,44.76) and (58.24,42.81) .. (58.24,40.41) .. controls (58.24,40.26) and (58.27,40.11) .. (58.32,39.97) -- (74.11,40.41) -- cycle ; \draw  [color={rgb, 255:red, 128; green, 128; blue, 128 }  ,draw opacity=1 ] (89.94,40.75) .. controls (89.29,43) and (82.45,44.76) .. (74.11,44.76) .. controls (65.35,44.76) and (58.24,42.81) .. (58.24,40.41) .. controls (58.24,40.26) and (58.27,40.11) .. (58.32,39.97) ;  
\draw  [draw opacity=0][dash pattern={on 0.84pt off 2.51pt}] (58.63,39.33) .. controls (59.9,37.27) and (66.43,35.7) .. (74.28,35.7) .. controls (82.93,35.7) and (89.96,37.6) .. (90.15,39.96) -- (74.28,40.06) -- cycle ; \draw  [color={rgb, 255:red, 128; green, 128; blue, 128 }  ,draw opacity=1 ][dash pattern={on 0.84pt off 2.51pt}] (58.63,39.33) .. controls (59.9,37.27) and (66.43,35.7) .. (74.28,35.7) .. controls (82.93,35.7) and (89.96,37.6) .. (90.15,39.96) ;  
\draw   (58.12,40.41) .. controls (58.12,31.57) and (65.28,24.41) .. (74.11,24.41) .. controls (82.95,24.41) and (90.11,31.57) .. (90.11,40.41) .. controls (90.11,49.24) and (82.95,56.4) .. (74.11,56.4) .. controls (65.28,56.4) and (58.12,49.24) .. (58.12,40.41) -- cycle ;
\draw  [fill={rgb, 255:red, 0; green, 0; blue, 0 }  ,fill opacity=1 ] (67.63,31.68) .. controls (67.63,30.55) and (68.55,29.63) .. (69.68,29.63) .. controls (70.82,29.63) and (71.73,30.55) .. (71.73,31.68) .. controls (71.73,32.81) and (70.82,33.73) .. (69.68,33.73) .. controls (68.55,33.73) and (67.63,32.81) .. (67.63,31.68) -- cycle ;
\draw  [fill={rgb, 255:red, 0; green, 0; blue, 0 }  ,fill opacity=1 ] (77.23,31.68) .. controls (77.23,30.55) and (78.15,29.63) .. (79.28,29.63) .. controls (80.42,29.63) and (81.33,30.55) .. (81.33,31.68) .. controls (81.33,32.81) and (80.42,33.73) .. (79.28,33.73) .. controls (78.15,33.73) and (77.23,32.81) .. (77.23,31.68) -- cycle ;
\draw  [draw opacity=0] (249.56,23.31) .. controls (249,25.25) and (243.11,26.77) .. (235.92,26.77) .. controls (228.36,26.77) and (222.23,25.09) .. (222.23,23.01) .. controls (222.23,22.88) and (222.26,22.76) .. (222.3,22.64) -- (235.92,23.01) -- cycle ; \draw  [color={rgb, 255:red, 128; green, 128; blue, 128 }  ,draw opacity=1 ] (249.56,23.31) .. controls (249,25.25) and (243.11,26.77) .. (235.92,26.77) .. controls (228.36,26.77) and (222.23,25.09) .. (222.23,23.01) .. controls (222.23,22.88) and (222.26,22.76) .. (222.3,22.64) ;  
\draw  [draw opacity=0][dash pattern={on 0.84pt off 2.51pt}] (222.57,22.08) .. controls (223.67,20.31) and (229.29,18.95) .. (236.06,18.95) .. controls (243.51,18.95) and (249.57,20.59) .. (249.75,22.62) -- (236.06,22.71) -- cycle ; \draw  [color={rgb, 255:red, 128; green, 128; blue, 128 }  ,draw opacity=1 ][dash pattern={on 0.84pt off 2.51pt}] (222.57,22.08) .. controls (223.67,20.31) and (229.29,18.95) .. (236.06,18.95) .. controls (243.51,18.95) and (249.57,20.59) .. (249.75,22.62) ;  
\draw   (222.13,23.01) .. controls (222.13,15.4) and (228.3,9.22) .. (235.92,9.22) .. controls (243.53,9.22) and (249.71,15.4) .. (249.71,23.01) .. controls (249.71,30.63) and (243.53,36.8) .. (235.92,36.8) .. controls (228.3,36.8) and (222.13,30.63) .. (222.13,23.01) -- cycle ;
\draw  [fill={rgb, 255:red, 0; green, 0; blue, 0 }  ,fill opacity=1 ] (230.33,15.49) .. controls (230.33,14.51) and (231.12,13.72) .. (232.1,13.72) .. controls (233.07,13.72) and (233.87,14.51) .. (233.87,15.49) .. controls (233.87,16.47) and (233.07,17.26) .. (232.1,17.26) .. controls (231.12,17.26) and (230.33,16.47) .. (230.33,15.49) -- cycle ;
\draw  [fill={rgb, 255:red, 0; green, 0; blue, 0 }  ,fill opacity=1 ] (238.61,15.49) .. controls (238.61,14.51) and (239.4,13.72) .. (240.37,13.72) .. controls (241.35,13.72) and (242.14,14.51) .. (242.14,15.49) .. controls (242.14,16.47) and (241.35,17.26) .. (240.37,17.26) .. controls (239.4,17.26) and (238.61,16.47) .. (238.61,15.49) -- cycle ;

\draw  [draw opacity=0] (249.56,60.11) .. controls (249,62.05) and (243.11,63.57) .. (235.92,63.57) .. controls (228.36,63.57) and (222.23,61.89) .. (222.23,59.81) .. controls (222.23,59.68) and (222.26,59.56) .. (222.3,59.44) -- (235.92,59.81) -- cycle ; \draw  [color={rgb, 255:red, 128; green, 128; blue, 128 }  ,draw opacity=1 ] (249.56,60.11) .. controls (249,62.05) and (243.11,63.57) .. (235.92,63.57) .. controls (228.36,63.57) and (222.23,61.89) .. (222.23,59.81) .. controls (222.23,59.68) and (222.26,59.56) .. (222.3,59.44) ;  
\draw  [draw opacity=0][dash pattern={on 0.84pt off 2.51pt}] (222.57,58.88) .. controls (223.67,57.11) and (229.29,55.75) .. (236.06,55.75) .. controls (243.51,55.75) and (249.57,57.39) .. (249.75,59.42) -- (236.06,59.51) -- cycle ; \draw  [color={rgb, 255:red, 128; green, 128; blue, 128 }  ,draw opacity=1 ][dash pattern={on 0.84pt off 2.51pt}] (222.57,58.88) .. controls (223.67,57.11) and (229.29,55.75) .. (236.06,55.75) .. controls (243.51,55.75) and (249.57,57.39) .. (249.75,59.42) ;  
\draw   (222.13,59.81) .. controls (222.13,52.2) and (228.3,46.02) .. (235.92,46.02) .. controls (243.53,46.02) and (249.71,52.2) .. (249.71,59.81) .. controls (249.71,67.43) and (243.53,73.6) .. (235.92,73.6) .. controls (228.3,73.6) and (222.13,67.43) .. (222.13,59.81) -- cycle ;
\draw  [fill={rgb, 255:red, 0; green, 0; blue, 0 }  ,fill opacity=1 ] (230.33,52.29) .. controls (230.33,51.31) and (231.12,50.52) .. (232.1,50.52) .. controls (233.07,50.52) and (233.87,51.31) .. (233.87,52.29) .. controls (233.87,53.27) and (233.07,54.06) .. (232.1,54.06) .. controls (231.12,54.06) and (230.33,53.27) .. (230.33,52.29) -- cycle ;
\draw  [fill={rgb, 255:red, 0; green, 0; blue, 0 }  ,fill opacity=1 ] (238.61,52.29) .. controls (238.61,51.31) and (239.4,50.52) .. (240.37,50.52) .. controls (241.35,50.52) and (242.14,51.31) .. (242.14,52.29) .. controls (242.14,53.27) and (241.35,54.06) .. (240.37,54.06) .. controls (239.4,54.06) and (238.61,53.27) .. (238.61,52.29) -- cycle ;

\draw  [draw opacity=0] (195.94,39.76) .. controls (195.29,42) and (188.45,43.77) .. (180.11,43.77) .. controls (171.35,43.77) and (164.24,41.82) .. (164.24,39.41) .. controls (164.24,39.26) and (164.27,39.12) .. (164.32,38.98) -- (180.11,39.41) -- cycle ; \draw  [color={rgb, 255:red, 128; green, 128; blue, 128 }  ,draw opacity=1 ] (195.94,39.76) .. controls (195.29,42) and (188.45,43.77) .. (180.11,43.77) .. controls (171.35,43.77) and (164.24,41.82) .. (164.24,39.41) .. controls (164.24,39.26) and (164.27,39.12) .. (164.32,38.98) ;  
\draw  [draw opacity=0][dash pattern={on 0.84pt off 2.51pt}] (164.63,38.33) .. controls (165.9,36.27) and (172.43,34.71) .. (180.28,34.71) .. controls (188.93,34.71) and (195.96,36.6) .. (196.15,38.96) -- (180.28,39.06) -- cycle ; \draw  [color={rgb, 255:red, 128; green, 128; blue, 128 }  ,draw opacity=1 ][dash pattern={on 0.84pt off 2.51pt}] (164.63,38.33) .. controls (165.9,36.27) and (172.43,34.71) .. (180.28,34.71) .. controls (188.93,34.71) and (195.96,36.6) .. (196.15,38.96) ;  
\draw   (164.12,39.41) .. controls (164.12,30.58) and (171.28,23.42) .. (180.11,23.42) .. controls (188.95,23.42) and (196.11,30.58) .. (196.11,39.41) .. controls (196.11,48.25) and (188.95,55.41) .. (180.11,55.41) .. controls (171.28,55.41) and (164.12,48.25) .. (164.12,39.41) -- cycle ;
\draw  [fill={rgb, 255:red, 0; green, 0; blue, 0 }  ,fill opacity=1 ] (170.63,30.49) .. controls (170.63,29.35) and (171.55,28.44) .. (172.68,28.44) .. controls (173.82,28.44) and (174.73,29.35) .. (174.73,30.49) .. controls (174.73,31.62) and (173.82,32.54) .. (172.68,32.54) .. controls (171.55,32.54) and (170.63,31.62) .. (170.63,30.49) -- cycle ;
\draw  [fill={rgb, 255:red, 0; green, 0; blue, 0 }  ,fill opacity=1 ] (178.23,30.49) .. controls (178.23,29.35) and (179.15,28.44) .. (180.28,28.44) .. controls (181.42,28.44) and (182.33,29.35) .. (182.33,30.49) .. controls (182.33,31.62) and (181.42,32.54) .. (180.28,32.54) .. controls (179.15,32.54) and (178.23,31.62) .. (178.23,30.49) -- cycle ;
\draw  [fill={rgb, 255:red, 0; green, 0; blue, 0 }  ,fill opacity=1 ] (185.83,30.49) .. controls (185.83,29.35) and (186.75,28.44) .. (187.88,28.44) .. controls (189.02,28.44) and (189.93,29.35) .. (189.93,30.49) .. controls (189.93,31.62) and (189.02,32.54) .. (187.88,32.54) .. controls (186.75,32.54) and (185.83,31.62) .. (185.83,30.49) -- cycle ;

\draw (204.5,30.9) node [anchor=north west][inner sep=0.75pt]    {$-$};
\draw (21,31.61) node [anchor=north west][inner sep=0.75pt]    {$h\ =\ $};
\draw (127,31.61) node [anchor=north west][inner sep=0.75pt]    {$t\ =\ $};
\draw (97,31.61) node [anchor=north west][inner sep=0.75pt]    {$,$};

\end{tikzpicture}
\end{center}
We have $\qdeg h = -2,\ \qdeg t = -4$. The relation (NC) implies the following relation:
\begin{center}
    \tikzset{every picture/.style={line width=0.75pt}} 

\begin{tikzpicture}[x=0.75pt,y=0.75pt,yscale=-.75,xscale=.75]

\draw  [dash pattern={on 0.84pt off 2.51pt}] (10.93,11.56) -- (56.56,11.56) -- (56.56,55.49) -- (10.93,55.49) -- cycle ;
\draw  [dash pattern={on 0.84pt off 2.51pt}] (122.41,11.06) -- (169.08,11.06) -- (169.08,55.99) -- (122.41,55.99) -- cycle ;
\draw  [dash pattern={on 0.84pt off 2.51pt}] (224.95,12.06) -- (269.54,12.06) -- (269.54,54.99) -- (224.95,54.99) -- cycle ;
\draw  [fill={rgb, 255:red, 0; green, 0; blue, 0 }  ,fill opacity=1 ] (23.5,34.2) .. controls (23.5,32.43) and (24.93,31) .. (26.7,31) .. controls (28.47,31) and (29.91,32.43) .. (29.91,34.2) .. controls (29.91,35.97) and (28.47,37.41) .. (26.7,37.41) .. controls (24.93,37.41) and (23.5,35.97) .. (23.5,34.2) -- cycle ;
\draw  [fill={rgb, 255:red, 0; green, 0; blue, 0 }  ,fill opacity=1 ] (36,34.2) .. controls (36,32.43) and (37.43,31) .. (39.2,31) .. controls (40.97,31) and (42.41,32.43) .. (42.41,34.2) .. controls (42.41,35.97) and (40.97,37.41) .. (39.2,37.41) .. controls (37.43,37.41) and (36,35.97) .. (36,34.2) -- cycle ;
\draw  [fill={rgb, 255:red, 0; green, 0; blue, 0 }  ,fill opacity=1 ] (142.54,33.52) .. controls (142.54,31.75) and (143.97,30.32) .. (145.74,30.32) .. controls (147.51,30.32) and (148.95,31.75) .. (148.95,33.52) .. controls (148.95,35.29) and (147.51,36.73) .. (145.74,36.73) .. controls (143.97,36.73) and (142.54,35.29) .. (142.54,33.52) -- cycle ;

\draw (68,25.32) node [anchor=north west][inner sep=0.75pt]    {$=$};
\draw (183,25.32) node [anchor=north west][inner sep=0.75pt]    {$+$};
\draw (103,25.32) node [anchor=north west][inner sep=0.75pt]    {$h$};
\draw (207,25.32) node [anchor=north west][inner sep=0.75pt]    {$t$};

\end{tikzpicture}
\end{center}
It is convenient to define the \textit{hollow dot} as
\begin{center}
    \tikzset{every picture/.style={line width=0.75pt}} 

\begin{tikzpicture}[x=0.75pt,y=0.75pt,yscale=-.75,xscale=.75]

\draw  [dash pattern={on 0.84pt off 2.51pt}] (10.93,11.56) -- (56.56,11.56) -- (56.56,55.49) -- (10.93,55.49) -- cycle ;
\draw  [dash pattern={on 0.84pt off 2.51pt}] (101.41,12.06) -- (148.08,12.06) -- (148.08,56.99) -- (101.41,56.99) -- cycle ;
\draw  [color={rgb, 255:red, 0; green, 0; blue, 0 }  ,draw opacity=1 ][fill={rgb, 255:red, 255; green, 255; blue, 255 }  ,fill opacity=1 ][line width=0.75]  (29.5,34.2) .. controls (29.5,32.43) and (30.93,31) .. (32.7,31) .. controls (34.47,31) and (35.91,32.43) .. (35.91,34.2) .. controls (35.91,35.97) and (34.47,37.41) .. (32.7,37.41) .. controls (30.93,37.41) and (29.5,35.97) .. (29.5,34.2) -- cycle ;
\draw  [fill={rgb, 255:red, 0; green, 0; blue, 0 }  ,fill opacity=1 ] (121.54,34.52) .. controls (121.54,32.75) and (122.97,31.32) .. (124.74,31.32) .. controls (126.51,31.32) and (127.95,32.75) .. (127.95,34.52) .. controls (127.95,36.29) and (126.51,37.73) .. (124.74,37.73) .. controls (122.97,37.73) and (121.54,36.29) .. (121.54,34.52) -- cycle ;
\draw  [dash pattern={on 0.84pt off 2.51pt}] (209.95,11.91) -- (254.54,11.91) -- (254.54,54.84) -- (209.95,54.84) -- cycle ;

\draw (68,25.32) node [anchor=north west][inner sep=0.75pt]    {$=$};
\draw (167,24.24) node [anchor=north west][inner sep=0.75pt]    {$-$};
\draw (190,24.17) node [anchor=north west][inner sep=0.75pt]    {$h$};

\end{tikzpicture}
\end{center}
Then (NC) can be rewritten as 
\begin{center}
    \input{tikzpictures/hollow-dot-NC}
\end{center}

When $B = \emptyset$, the \textit{tautological functor}
\[
    \Hom_{\bullet/l}(\emptyset, -)\colon
    \Cob^3_{\bullet/l}(\emptyset) \to \GrMod
\]
recovers the TQFT for the \textit{$U(2)$-equivariant Khovanov homology}, i.e.\ it gives rise to the graded ring 
\[
    \Hom_{\bullet/l}(\emptyset, \emptyset) \isom R = \ZZ[h, t] 
\]
and the graded Frobenius algebra
\[
    \Hom_{\bullet/l}(\emptyset, \bigcirc) \isom A = R[X]/(X^2 - hX - t).
\]
The two generators $1, X$ of $A$ as a rank 2 free $R$-module are identified with
\begin{center}
    \tikzset{every picture/.style={line width=0.75pt}} 

\begin{tikzpicture}[x=0.75pt,y=0.75pt,yscale=-1,xscale=1]

\draw  [draw opacity=0][dash pattern={on 0.84pt off 2.51pt}] (25.7,33.61) .. controls (23.49,31.96) and (21.86,26.52) .. (21.86,20.07) .. controls (21.86,13.63) and (23.48,8.2) .. (25.7,6.54) -- (27.13,20.07) -- cycle ; \draw  [dash pattern={on 0.84pt off 2.51pt}] (25.7,33.61) .. controls (23.49,31.96) and (21.86,26.52) .. (21.86,20.07) .. controls (21.86,13.63) and (23.48,8.2) .. (25.7,6.54) ;  
\draw  [draw opacity=0] (26.55,6.1) .. controls (26.74,6.04) and (26.93,6.02) .. (27.13,6.02) .. controls (30.04,6.02) and (32.4,12.31) .. (32.4,20.07) .. controls (32.4,27.84) and (30.04,34.13) .. (27.13,34.13) .. controls (26.93,34.13) and (26.74,34.11) .. (26.55,34.05) -- (27.13,20.07) -- cycle ; \draw   (26.55,6.1) .. controls (26.74,6.04) and (26.93,6.02) .. (27.13,6.02) .. controls (30.04,6.02) and (32.4,12.31) .. (32.4,20.07) .. controls (32.4,27.84) and (30.04,34.13) .. (27.13,34.13) .. controls (26.93,34.13) and (26.74,34.11) .. (26.55,34.05) ;  
\draw  [draw opacity=0] (27.71,6.02) .. controls (27.51,6.02) and (27.32,6.02) .. (27.13,6.02) .. controls (18,6.02) and (10.61,12.31) .. (10.61,20.07) .. controls (10.61,27.84) and (18,34.13) .. (27.13,34.13) .. controls (27.32,34.13) and (27.51,34.13) .. (27.71,34.12) -- (27.13,20.07) -- cycle ; \draw   (27.71,6.02) .. controls (27.51,6.02) and (27.32,6.02) .. (27.13,6.02) .. controls (18,6.02) and (10.61,12.31) .. (10.61,20.07) .. controls (10.61,27.84) and (18,34.13) .. (27.13,34.13) .. controls (27.32,34.13) and (27.51,34.13) .. (27.71,34.12) ;  

\draw  [draw opacity=0][dash pattern={on 0.84pt off 2.51pt}] (117.7,34.61) .. controls (115.49,32.96) and (113.86,27.52) .. (113.86,21.07) .. controls (113.86,14.63) and (115.48,9.2) .. (117.7,7.54) -- (119.13,21.07) -- cycle ; \draw  [dash pattern={on 0.84pt off 2.51pt}] (117.7,34.61) .. controls (115.49,32.96) and (113.86,27.52) .. (113.86,21.07) .. controls (113.86,14.63) and (115.48,9.2) .. (117.7,7.54) ;  
\draw  [draw opacity=0] (118.55,7.1) .. controls (118.74,7.04) and (118.93,7.02) .. (119.13,7.02) .. controls (122.04,7.02) and (124.4,13.31) .. (124.4,21.07) .. controls (124.4,28.84) and (122.04,35.13) .. (119.13,35.13) .. controls (118.93,35.13) and (118.74,35.11) .. (118.55,35.05) -- (119.13,21.07) -- cycle ; \draw   (118.55,7.1) .. controls (118.74,7.04) and (118.93,7.02) .. (119.13,7.02) .. controls (122.04,7.02) and (124.4,13.31) .. (124.4,21.07) .. controls (124.4,28.84) and (122.04,35.13) .. (119.13,35.13) .. controls (118.93,35.13) and (118.74,35.11) .. (118.55,35.05) ;  
\draw  [draw opacity=0] (119.71,7.02) .. controls (119.51,7.02) and (119.32,7.02) .. (119.13,7.02) .. controls (110,7.02) and (102.61,13.31) .. (102.61,21.07) .. controls (102.61,28.84) and (110,35.13) .. (119.13,35.13) .. controls (119.32,35.13) and (119.51,35.13) .. (119.71,35.12) -- (119.13,21.07) -- cycle ; \draw   (119.71,7.02) .. controls (119.51,7.02) and (119.32,7.02) .. (119.13,7.02) .. controls (110,7.02) and (102.61,13.31) .. (102.61,21.07) .. controls (102.61,28.84) and (110,35.13) .. (119.13,35.13) .. controls (119.32,35.13) and (119.51,35.13) .. (119.71,35.12) ;  
\draw  [fill={rgb, 255:red, 0; green, 0; blue, 0 }  ,fill opacity=1 ] (105.49,20.82) .. controls (105.49,19.96) and (106.19,19.27) .. (107.04,19.27) .. controls (107.9,19.27) and (108.6,19.96) .. (108.6,20.82) .. controls (108.6,21.68) and (107.9,22.37) .. (107.04,22.37) .. controls (106.19,22.37) and (105.49,21.68) .. (105.49,20.82) -- cycle ;

\draw (39,10.4) node [anchor=north west][inner sep=0.75pt]    {$\mapsto \ 1,$};
\draw (130,11.4) node [anchor=north west][inner sep=0.75pt]    {$\mapsto \ X.$};

\end{tikzpicture}
\end{center}
Put $Y = X - h$. Then, elementary cobordisms in $\Cob^3_{\bullet/l}$ correspond to operations on $A$ as follows:
\begin{center}
    \input{tikzpictures/elem-cob-symbols}    
\end{center}

\subsection{Khovanov homology for tangles}
\label{subsec:kh-for-tangles}

Let $\Diag(B)$ denote the category whose objects are oriented tangle diagrams $T$ with $\del T = B$, and whose morphisms\footnote{
    In \cite{BN:2005}, the domain category of $[\cdot]$ is denoted $\Cob^4(B)$. Here we adopt the notation and definitions of \cite{BHL:2019} for clarity and to avoid a genericity argument for the cobordisms.
} are oriented movies between tangle diagrams that fix the boundary $B$. Here, we assume that each tangle diagram $T$ is equipped with an ordering of the crossings. Its algebraic counterpart is the category of complexes in the additive closure $\Mat(\Cob^3_{\bullet/l}(B))$ of $\Cob^3_{\bullet/l}(B)$, denoted
\[
    \Kob_{\bullet/l}(B) := \Kom(\Mat(\Cob^3_{\bullet/l}(B))).
\]

\begin{figure}[t]
    \centering
    \resizebox{.95\textwidth}{!}{
        \input{tikzpictures/ckh-tangle}
    }
    \caption{The formal Khovanov bracket $[T]$}
    \label{fig:formal-ckh}
\end{figure}

Given a tangle diagram $T$ in $\Diag(B)$, the formal Khovanov bracket $[T]$ is defined as a bigraded chain complex in the additive closure $\Mat(\Cob^3_{\bullet/l}(B))$ of $\Cob^3_{\bullet/l}(B)$. The construction is identical to Khovanov's original one, except that each ``chain group'' is a formal direct sum of diagrams, and the differential $d$ is made of matrices whose entries are elementary cobordisms. See \Cref{fig:formal-ckh} for an example. Similarly, for a movie $S$ between tangle diagrams, a chain map $[S]$ between the corresponding complexes is defined as the composition of the chain maps associated with the elementary moves. This gives a functor 
\[
    [\cdot]\colon \Diag(B) \to \Kob_{\bullet/l}(B).
\]
It is proved in \cite{BN:2005} that the chain homotopy type of $[T]$ is an invariant of the tangle. In particular, when $\partial T = \emptyset$, the \textit{tautological functor} (TQFT)
\[
    \Hom_{\bullet/l}([\emptyset], -)\colon \Kob_{\bullet/l}(\emptyset) \longrightarrow \Kom({R\GrMod})
\]
recovers the \textit{$U(2)$-equivariant Khovanov complex} $\CKh_{h, t}(T) = \Hom_{\bullet/l}([\emptyset], [T])$, from which the original Khovanov homology \cite{Kh:2000} and its deformations \cite{Lee:2005,BN:2005,Kh:2006} can be fully recovered by substituting $h$ and $t$.

Furthermore, \cite[Theorem 5]{BN:2005} states that $[\cdot]$ descends to a functor, well-defined up to sign,
\[
    [\cdot]\colon \Diag_{/i}(B) \to h\Kob_{\bullet/l}(B)
\]
where $\Diag_{/i}(B)$ denotes the quotient category of $\Diag(B)$ modulo the 15 \textit{movie moves} of Carter and Saito \cite{CS:1993}, and $h\Kob_{\bullet/l}(B)$ denotes the homotopy category of $\Kob_{\bullet/l}(B)$. This proves that the cobordism map of Khovanov homology is isotopy invariant up to sign, and that Khovanov homology is functorial up to sign with respect to tangle and link cobordisms.

\subsection{Delooping and Gaussian elimination}

For $n \in \ZZ$, we write $q^n X$ for the object $X$ with its $q$-grading shifted by $n$. The following two propositions, \textit{Delooping} and \textit{Gaussian elimination}, were originally proved in \cite{BN:2007} and will be used repeatedly in this paper.

\begin{proposition}[Delooping]
\label{prop:delooping}
    The following matrices of morphisms give mutually inverse isomorphisms in $\Mat(\Cob^3_{\bullet/l}(B))$:
    \begin{center}
        \input{tikzpictures/delooping}
    \end{center}
\end{proposition}

\begin{definition}
    Consider the category of chain complexes in any additive category. A chain map $r\colon \Omega \rightarrow \Omega'$ is a \textit{strong deformation retraction}
    if there is a chain map $i\colon \Omega' \rightarrow \Omega$ (called the \textit{inclusion}) and a homotopy $h$ on $\Omega$ satisfying (i) $ri = 1$, (ii) $ir - 1 = dh + hd$, (iii) $hi = 0$, (iv) $rh = 0$ and (v) $h^2 = 0$. Such $\Omega'$ is called a \textit{strong deformation retract} of $\Omega$. The following diagram will be frequently used to describe these maps. 
    \[
        \begin{tikzcd}
        \Omega 
            \arrow[r, "r", shift left] 
            \arrow["h"', loop, distance=2em, in=305, out=235] & 
        \Omega' 
            \arrow[l, "i", shift left]
        \end{tikzcd}        
    \]
\end{definition}

\begin{proposition}[Gaussian elimination]
\label{prop:gauss-elim}
    Suppose the top row of the following diagram is part of a chain complex $\Omega$ in which the morphism $X \xrightarrow{a} Y$ is invertible. Then there is a strong deformation retract $\Omega'$ of $\Omega$ described by the bottom row, with a strong deformation retraction indicated by the downward vertical arrows, 
    \[
\begin{tikzcd}
[row sep=7em, column sep=4em, ampersand replacement=\&]
U 
    \arrow[r, "{\begin{pmatrix} \alpha \\ \beta \end{pmatrix}}"] 
    \arrow[d, equal] 
\& \begin{pmatrix} X \\ X' \end{pmatrix} 
    \arrow[rr, "{\begin{pmatrix} a & b \\ c & d \end{pmatrix}}", shift left] 
    \arrow[d, "{\begin{pmatrix} 0 & 1 \end{pmatrix}}"', shift right] 
\& \& \begin{pmatrix} Y \\ Y' \end{pmatrix} 
    \arrow[r, "{\begin{pmatrix} \gamma & \delta \end{pmatrix}}"] 
    \arrow[d, "{\begin{pmatrix} -ca^{-1} & 1 \end{pmatrix}}"', shift right] 
    \arrow[ll, "{\begin{pmatrix} -a^{-1} & 0 \\ 0 & 0 \end{pmatrix}}", dashed, shift left]
\& V 
    \arrow[d, equal] 
\\ U 
    \arrow[r, "\beta"] 
\& X' 
    \arrow[rr, "d - c a^{-1} b"] 
    \arrow[u, "{\begin{pmatrix} -a^{-1}b \\ 1 \end{pmatrix}}"', dashed, shift right]
\& \& Y' 
    \arrow[r, "\delta"] 
    \arrow[u, "{\begin{pmatrix} 0 \\ 1 \end{pmatrix}}"', dashed, shift right]
\& V.
\end{tikzcd}
    \]
    The inclusion and the homotopy associated to the retraction are indicated by the dashed arrows.
\end{proposition}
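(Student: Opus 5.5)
The plan is a direct verification. The data are explicit, so we check each of the five identities (i)--(v) in the definition of a strong deformation retraction, together with the chain-map conditions, using only $d^2 = 0$ in $\Omega$ and the invertibility of $a$. Write $r$, $i$, $h$ for the maps in the diagram. On $U$ and $V$ both $r$ and $i$ are the identity. In the two middle degrees they are
\[
    r = \begin{pmatrix} 0 & 1 \end{pmatrix},\ i = \begin{pmatrix} -a^{-1}b \\ 1 \end{pmatrix} \text{ on } X \oplus X',
    \qquad
    r = \begin{pmatrix} -ca^{-1} & 1 \end{pmatrix},\ i = \begin{pmatrix} 0 \\ 1 \end{pmatrix} \text{ on } Y \oplus Y'.
\]
The homotopy $h\colon Y \oplus Y' \to X \oplus X'$ has the single nonzero entry $-a^{-1}$, and $h$ vanishes in all other degrees.

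First I will record the consequences of $d^2 = 0$ around the square:
\[
    a\alpha + b\beta = 0,\quad c\alpha + d\beta = 0,\quad \gamma a + \delta c = 0,\quad \gamma b + \delta d = 0.
\]
From these, using $\alpha = -a^{-1}b\beta$ and $\gamma = -\delta c a^{-1}$, we get
\[
    (d - ca^{-1}b)\beta = 0, \qquad \delta(d - ca^{-1}b) = 0.
\]
So the bottom row is a chain complex. Next I will check that $r$ and $i$ are chain maps, degree by degree. For example, at $U \to X \oplus X'$, the composite $i\beta$ equals $\begin{pmatrix} \alpha \\ \beta \end{pmatrix}$ precisely because $\alpha = -a^{-1}b\beta$. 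Also
\[
    \begin{pmatrix} a & b \\ c & d \end{pmatrix} i = \begin{pmatrix} 0 \\ d - ca^{-1}b \end{pmatrix},
\]
which equals $i$ applied to the bottom differential. The identities $r(a\ b;\ c\ d) = (d - ca^{-1}b)\,r$ on $X \oplus X'$ and $\delta r = (\gamma\ \delta)$ on $Y \oplus Y'$ follow in the same way.

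Then the identities. Condition (i), $ri = 1$, is immediate in each degree. Conditions (iii), (iv) and (v) are one-line matrix products: $h i = 0$ on $Y \oplus Y'$ because $h$ kills the $Y'$ summand, and $r h = 0$ on $Y \oplus Y'$ because $h$ lands in $X$ while $r = (0\ 1)$ kills $X$. Also $h^2 = 0$ for degree reasons, since $h$ is nonzero in only one degree. In degrees $U$ and $V$, conditions (iii)--(iv) hold trivially.

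The substantive step is (ii), $ir - 1 = dh + hd$, in each of the four degrees.
\begin{itemize}
\item On $U$ and $V$ both sides vanish, since $ir = 1$ there and $h$ vanishes on the relevant maps.
\item On $X \oplus X'$, the left side is
\[
    \begin{pmatrix} -1 & -a^{-1}b \\ 0 & 0 \end{pmatrix},
\]
and $hd$ is $-a^{-1}$ times the top row $(a\ b)$. These agree. The other term, $dh$, involves the $h$ out of $X \oplus X'$, which is zero.
\item On $Y \oplus Y'$, the left side is
\[
    \begin{pmatrix} -1 & 0 \\ -ca^{-1} & 0 \end{pmatrix},
\]
and $dh = \begin{pmatrix} a & b \\ c & d \end{pmatrix}\begin{pmatrix} -a^{-1} & 0 \\ 0 & 0 \end{pmatrix}$ gives exactly this. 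The term $hd$ involves the $h$ out of $V$, which is zero.
\end{itemize}

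I expect no real obstacle. The only care needed is bookkeeping: the sign conventions on $h$ and the order of composition in a general additive category, where entries need not commute. In particular $a^{-1}$ must always sit on the correct side, as in $ca^{-1}b$. Since every step is a finite matrix identity, the proof reduces to these checks.
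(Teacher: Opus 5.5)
Your verification is correct. The four relations from $d^2=0$, the chain-map checks for $r$ and $i$, and identities (i)--(v) all come out as you state. Since you keep $a^{-1}$ on the correct side throughout, the argument works in any additive category, in particular with $\ZZ$ coefficients, where the proposition is stated.

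The paper gives no proof of its own and simply cites \cite{BN:2007}. The argument there is structural rather than computational. The unipotent isomorphisms $P=\left(\begin{smallmatrix}1 & a^{-1}b\\ 0 & 1\end{smallmatrix}\right)$ on $X\oplus X'$ and $Q=\left(\begin{smallmatrix}1 & 0\\ -ca^{-1} & 1\end{smallmatrix}\right)$ on $Y\oplus Y'$, extended by the identity elsewhere, form a chain isomorphism $\Phi$. It carries $\Omega$ to the direct sum of $\Omega'$ with the contractible complex $X\xrightarrow{a}Y$. Write $\pi,\iota$ for projection onto and inclusion of the summand $\Omega'$, and $h_0=-a^{-1}$ for the contraction of $X\xrightarrow{a}Y$. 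Then the data in the statement are exactly $r=\pi\Phi$, $i=\Phi^{-1}\iota$ and $h=\Phi^{-1}h_0\Phi$, and (i)--(v) transfer from the split situation by conjugation.

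That route explains where the formulas come from, in particular $d-ca^{-1}b$ and the shape of $r$ and $i$. Once $\Phi$ is written down it needs essentially no computation. Your route needs no insight into the splitting and checks every identity by hand. That is perfectly adequate here, and it makes explicit the strong-deformation-retract conditions the paper later uses when tracking cycles through eliminations.
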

    \section{Involutive Khovanov homology for tangles}
\label{sec:inv-kh}

In \cite{Sano:2025b}, the author introduced the \textit{involutive Khovanov homology} for \textit{involutive links}. Here, we extend the construction to \textit{involutive tangles}, based on Bar-Natan's framework of Khovanov homology for tangles \cite{BN:2005}.

Throughout the rest of this paper, all categories are taken modulo the local relations, and we abbreviate
\[
    \Cob(B) := \Cob^3_{\bullet/l}(B),\quad
    \Kob(B) := \Kob_{\bullet/l}(B),
\]
writing $\Hom(-, -)$ for the hom-sets accordingly. Moreover, unless otherwise stated, we work over $\FF_2$; that is, the morphism groups of $\Cob(B)$ are taken to be $\FF_2$-vector spaces. Signs are nonetheless written out where they naturally arise, both for clarity and in view of a future upgrade to $\ZZ$ coefficients.

\subsection{$\tau$-equivariant Khovanov complex}

Let $B$ be a set of an even number of points on the boundary $\del D^2$ of the unit disk $D^2$. Let $\tau$ denote the 180-degree rotation of $\RR^2$ in $\RR^3$ about the $y$-axis. Its extension $\tau \times \id$ in $\RR^2 \times I$ will also be denoted $\tau$. Since the local relations are preserved by $\tau$, this gives rise to a functor
\[
    \tau\colon
    \Cob(B) \to \Cob(\tau B)
\]
and further induces a functor between the corresponding categories of complexes,
\[
    \tau\colon
    \Kob(B) \to \Kob(\tau B).
\]
By extending $\tau$ to tangle diagrams, we obtain a functor,
\[
    \tau\colon 
    \Diag(B) \to \Diag(\tau B).
\]
Here, the action of $\tau$ on a tangle diagram is defined to preserve the ordering of the crossings. 

Although these definitions seem straightforward and we have $[\tau T] = \tau [T]$ for any tangle $T$, the following diagram of functors is not strictly commutative:
\[
    \begin{tikzcd}
    \Diag(B) \arrow[r, "{[\cdot]}"] \arrow[d, "\tau"] & \Kob(B) \arrow[d, "\tau"] \\
    \Diag(\tau B) \arrow[r, "{[\cdot]}"] & \Kob(\tau B).
    \end{tikzcd}
\]
This can be exhibited by the cobordism map for the R3-move given in \cite[Figure 9]{BN:2005}. The map is defined by selecting the \textit{lowermost} crossing among the three crossings, but being lowermost is not preserved under $\tau$. Nonetheless, the diagram commutes up to homotopy, as the following proposition shows. 

\begin{proposition}
\label{prop:tau-equiv-mor}
    Given a morphism $S\colon T \to T'$ in $\Diag(B)$, the chain maps $\tau[S]$ and $[\tau S]$ are homotopic.
\end{proposition}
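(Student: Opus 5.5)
Since $[\cdot]$ is a functor and $\tau$ is a functor on $\Kob$, both $S \mapsto \tau[S]$ and $S \mapsto [\tau S]$ respect composition of movies. It therefore suffices to treat the case where $S$ is a single elementary move: a planar isotopy, a Morse move (birth, death, saddle), or a Reidemeister move R1, R2, R3 or its inverse. Homotopy is compatible with composition, so once each elementary piece gives homotopic maps, a composite $S = S_n \circ \cdots \circ S_1$ gives
\[
    \tau[S] = \tau[S_n]\circ\cdots\circ\tau[S_1] \simeq [\tau S_n]\circ\cdots\circ[\tau S_1] = [\tau S].
\]

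For planar isotopies and Morse moves, the chain maps are built directly from the corresponding elementary cobordisms applied resolution-wise. Applying $\tau$ to such a cobordism gives exactly the cobordism assigned to the rotated move, because the local relations are $\tau$-invariant and crossing orderings are preserved by $\tau$. Hence in these cases $\tau[S] = [\tau S]$ on the nose. The same reasoning applies to R1 and R2, provided their maps in \cite{BN:2005} are defined by local pictures with no auxiliary choice. The only subtlety is orientation or mirror conventions: $\tau$ maps an R1 or R2 move of one type to a move of the same combinatorial shape, possibly rotated in the plane, and Bar-Natan's maps are natural under planar isotopy of the local disk.

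The main obstacle is R3, where the map depends on choosing the lowermost strand or crossing. Since $\tau$ is a rotation about the $y$-axis, it reverses over and under information relative to the new projection, so $[\tau S]$ uses a different choice from $\tau[S]$. Both maps are chain homotopy equivalences between the same pair of complexes $[\tau T]$ and $[\tau T']$. My first approach is Bar-Natan's rigidity argument: the R3 tangle is a tangle with six endpoints, the complexes are \emph{Kh-simple}, meaning every degree-$0$ automorphism of the tangle complex up to homotopy is $\pm 1$, and we work over $\FF_2$, so the sign ambiguity disappears. Concretely, $[\tau S]^{-1}\circ\tau[S]$ is a homotopy self-equivalence of $[\tau T]$ that preserves $q$-degree. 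By the Kh-simplicity lemma of \cite{BN:2005}, which is used in the proof of the movie move theorem for tangles with a crossingless-matching end (as in the R3 local picture), this composite is homotopic to the identity.

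If that lemma does not apply cleanly, the fallback is more hands-on. I would express the alternative R3 map, with the other crossing chosen as lowermost, as a composite of R2 moves and the original R3 map, via the standard braid-like manipulation. I would then invoke movie moves from \cite[Theorem 5]{BN:2005}, which state that movies isotopic rel boundary induce homotopic maps up to sign, and the sign is irrelevant over $\FF_2$. Both the R3 with one choice of lowermost crossing and the R3 with the other are realizations of the same isotopy class of movie. This is also the cleanest global argument. Indeed, $\tau S$ and $S$ rotated are the same cobordism of tangles up to isotopy rel boundary, and $\tau[S]$ equals $[S']$ for a movie $S'$ representing the same surface as $\tau S$ (the rotated movie with the other choices). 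So \cite[Theorem 5]{BN:2005} gives $\tau[S] \simeq [\tau S]$, and the elementary-move analysis only serves to identify $\tau[S]$ as $[S']$.
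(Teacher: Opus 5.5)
Your main line is the paper's proof: reduce to elementary moves, observe strict equality $\tau[S] = [\tau S]$ for Morse moves, R1 and R2, and for R3 use Kh-simplicity of the minimal six-ended R3 tangle, with the $\pm$ ambiguity irrelevant over $\FF_2$ (the paper, like you, first restricts to the minimal local diagrams). Drop the fallback paragraph rather than calling it the cleanest global argument: its identification of $\tau[S]$ with $[S']$ for some movie $S'$ is unjustified, since the rotated R3 map is not a priori the image of any movie under $[\cdot]$, and establishing it would itself seem to need the Kh-simplicity argument.
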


\begin{proof}
    This is essentially proved in \cite[Lemma 4.27]{Sano:2025b}. It suffices to treat the case where $S$ is either a Reidemeister move or a Morse move, and in either case we may assume that $T, T'$ are minimal tangle diagrams representing the move. For the R1, R2 moves and the three Morse moves, the two chain maps are strictly equal. For the R3 move, since the diagrams $T, T'$ are \textit{$\Kh$-simple} (see \cite[Definition 8.5]{BN:2005}), it follows that the two chain maps are homotopic.\footnote{Alternatively, we may construct an explicit homotopy between the two chain maps.}
\end{proof}

Hereafter, we fix the homotopy for the R3 move, so that the homotopy exhibiting $\tau[S] \htpy [\tau S]$ is determined without ambiguity.

Now, we move on to the involutive setting. Recall that an \textit{involutive link} in $S^3$ is an oriented link $L$ equipped with an involution $\tau$ on $S^3$ (with fixed point set an unknotted circle) such that $\tau(L) = L$, not necessarily preserving orientation. An \textit{involutive link diagram} is a link diagram that is symmetric with respect to the 180-degree rotation about the $y$-axis. We extend these definitions to tangles. Let $B \subset \partial D^2$ be a set of boundary points satisfying $\tau B = B$.

\begin{definition}
\label{def:inv-tangle}
    An \textit{involutive tangle} with boundary in $B$ is a tangle $T$ in the $3$-ball $D^3$ with $\partial T = B$ that is fixed under the $180$-degree rotation $\tau$ about the $y$-axis, i.e.\ $\tau(T) = T$, not necessarily preserving orientation. Two involutive tangles are \textit{equivalent} if they are isotopic through involutive tangles fixing the boundary.
\end{definition}

\begin{definition}
\label{def:inv-tangle-diag}
    An \textit{involutive tangle diagram} $T$ with boundary in $B$ is a tangle diagram with $\partial T = B$ that is symmetric with respect to the 180-degree rotation about the $y$-axis, i.e.\ $\tau T = T$, not necessarily preserving orientation. 
\end{definition}

Every involutive tangle is presented by an involutive tangle diagram, and two involutive tangle diagrams present equivalent involutive tangles if and only if they are related by a finite sequence of symmetric planar isotopies and \textit{involutive Reidemeister moves} (see \Cref{fig:inv-reidemeister} in \Cref{subsec:invariance}). This equivariant analogue of the Reidemeister theorem follows from the argument of \cite[Theorem 2.3]{LW:2021}, applied with the boundary held fixed.

For an involutive tangle diagram $T$, the two diagrams $T$ and $\tau T$ are identical but have different orderings of the crossings, and therefore, the complexes $[T]$ and $\tau [T] = [\tau T]$ are not strictly equal. Nonetheless, we have the following proposition.

\begin{proposition}
\label{prop:inv-tng-assoc-isom}
    There is a canonical isomorphism $i\colon [T] \to \tau [T]$ such that $\tau(i) = i^{-1}\colon \tau [T] \to [T]$. 
\end{proposition}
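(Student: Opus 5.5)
The plan is to build $i$ directly from the cube of resolutions. The complexes $[T]$ and $\tau[T]=[\tau T]$ are built from the same underlying diagram, since $\tau T = T$ as a diagram. They differ only in how the crossings are labeled: crossing $c_k$ of $T$ sits at position $k$ in $[T]$, while in $\tau T$ the $k$-th crossing is $\tau(c_k)$. So $\tau$ induces a permutation $\sigma$ of $\{1,\dots,n\}$, determined by $\tau(c_k)=c_{\sigma(k)}$.

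First I will identify the vertices of the two cubes. For a vertex $v\in\{0,1\}^n$ of the cube for $[T]$, the resolution $T_v$ is a crossingless diagram in $D^2$. In $\tau[T]$ the object at $v$ is $\tau(T_v)$, the image of $T_v$ under $\tau$. By the symmetry of $T$, this equals the resolution $T_{\sigma^*v}$ of $T$, where $(\sigma^*v)_k = v_{\sigma^{-1}(k)}$. One has to check that the $0$/$1$-smoothing convention is preserved by $\tau$. It is: $\tau$ is a rotation in $\RR^3$, so it sends a diagram crossing with its over/under data to a crossing of the same type, and it respects the $0$/$1$ smoothings.

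Next I will define $i$ summand by summand. The summand of $[T]$ at $v$ is sent by the identity cobordism to the summand of $\tau[T]$ at the vertex $\sigma^{*-1}(v)$, which is literally the same crossingless diagram. The homological and $q$-degrees agree, since they depend only on $|v|$ and on the crossing signs, and $\tau$ preserves both. I then check that this is a chain map. For each edge $v\to w$ changing crossing $c_k$, the saddle cobordism used in $[T]$ and the saddle used in $\tau[T]$ are the same surface: the latter is $\tau$ applied to the saddle at $\tau^{-1}(c_k)$, which is the saddle at $c_k$. Over $\FF_2$ the edge signs can be ignored, so $i$ commutes with the differentials and is an isomorphism. (With $\ZZ$ coefficients one would add the sign of the permutation of the tensor factors, i.e.\ the standard reordering isomorphism of cubes.)

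Finally, $\tau(i)\colon \tau[T]\to\tau\tau[T]=[T]$ is again componentwise the identity cobordism between equal diagrams, now using the permutation $\sigma^{-1}$. Since $\tau^2=\id$ on diagrams and cobordisms, this is exactly the inverse reindexing, so $\tau(i)\circ i=\id$ and $\tau(i)=i^{-1}$. Canonicity holds because $i$ involves no choices beyond the given orderings.

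The main obstacle I expect is bookkeeping: confirming that $\tau$ preserves smoothing types, degree shifts, and the identification of saddles, and, if signs are kept, that the reordering sign cocycle is compatible with $\tau(i)=i^{-1}$. Over $\FF_2$ this last point is vacuous.
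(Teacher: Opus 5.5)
Your proposal is correct and follows the same route as the paper. In both, $i$ is the reindexing of states induced by the crossing bijection $c\mapsto\tau(c)$ with identity components, and $\tau(i)\circ i=\id$ holds because that bijection (your $\sigma$) is an involution.

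The only difference is in how signs are handled. The paper, with $\ZZ$ coefficients in mind, invokes the Lipshitz--Sarkar uniqueness of such isomorphisms up to overall sign and normalizes $i$ at the initial state $\bar{0}$. You work over $\FF_2$, where this step is vacuous, and that is the paper's actual setting.
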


\begin{proof}
    The isomorphism $i$ is given by the reordering of the crossings from $[T]$ to $\tau[T]$. To be precise, each crossing $x$ of $T$ corresponds to a unique crossing $x'$ of $\tau T$ lying at the same position. The mapping $x \mapsto x'$ gives a bijection $c(T) \to c(\tau T)$ between the sets of crossings of $T$ and $\tau T$, which induces a bijection $2^{c(T)} \to 2^{c(\tau T)}$ between the sets of states of $T$ and $\tau T$. This gives an isomorphism $i\colon [T] \to \tau[T]$ between the two complexes, which is unique up to overall sign, by the argument given after \cite[Definition 4.5]{LS:2014a}. We choose the one that restricts to $I$ on the initial state $\bar{0}$. 
    
    Now, consider $\tau(i)\colon \tau [T] \to [T]$. The composition $\tau(i) \circ i$ is an automorphism of $[T]$ that restricts to $\pm I$ on each state space. By the same argument, this sign is uniform, and since it restricts to $I$ on $\bar{0}$, we have $\tau(i) \circ i = \id_{[T]}$.
\end{proof}

\begin{figure}
    \centering
    \input{tikzpictures/tau-equiv-complex}
    \caption{Explicit identification between $[T]$ and $\tau [T]$.}
    \label{fig:tau-equiv-cpx}
\end{figure}

The isomorphism $i$ of \Cref{prop:inv-tng-assoc-isom} is called the \textit{isomorphism associated with the involutive tangle diagram} $T$. \Cref{fig:tau-equiv-cpx} shows the action of $\tau$ on an involutive tangle $T$ together with the associated isomorphism $i\colon [T] \isom \tau [T]$. 

A general \textit{$\tau$-equivariant complex} is obtained by abstracting the structure of $([T], i)$. 

\begin{definition}
\label{def:tau-equiv-complex}
    A \textit{$\tau$-equivariant complex} (or simply a \textit{$\tau$-complex}), is a complex $\Omega \in \Kob(B)$ equipped with an isomorphism $i\colon \Omega \to \tau \Omega$ such that $\tau(i) = i^{-1}$. 
    A \textit{$\tau$-equivariant chain map} (or simply a \textit{$\tau$-map}) between $\tau$-complexes $(\Omega, i)$ and $(\Omega', i')$, is a pair consisting of a morphism $f\colon \Omega \to \Omega'$ and a homotopy $F$ exhibiting $i' f - \tau(f) i = [d, F]$. We express $F$ as a homotopy commutative diagram
    \[
        \begin{tikzcd}[row sep = 3em, column sep = 3em]
        \Omega \arrow[r, "f"] \arrow[d, "i"'] & 
        \Omega' \arrow[d, "i'"] \arrow[ld, "F"', Rightarrow] \\
        \tau \Omega \arrow[r, "\tau f"'] 
        & \tau \Omega'.
        \end{tikzcd}
    \]
    A morphism $(f, F)$ is \textit{strictly $\tau$-equivariant} when $F = 0$. The identity morphism on $(\Omega, i)$ is given by 
    \[
        \id_{(\Omega, i)} := (\id_\Omega, 0).
    \]
    For morphisms $(f, F)\colon (\Omega, i) \to (\Omega', i')$ and $(g, G)\colon (\Omega', i') \to (\Omega'', i'')$, the composition is defined as 
    \[
        (g, G) \circ (f, F) := (g f, Gf + \tau(g) F).
    \]
    The above definitions give rise to the \textit{category of $\tau$-complexes}, denoted $\Kob^\tau(B)$. 
\end{definition}

\begin{proposition}
    $\Kob^\tau(B)$ is indeed a category. 
\end{proposition}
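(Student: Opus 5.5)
We verify the category axioms directly from \Cref{def:tau-equiv-complex}: the composite of two $\tau$-maps is again a $\tau$-map, composition is associative, and $(\id_\Omega, 0)$ is a two-sided unit. Throughout we use only that $\tau\colon \Kob(B) \to \Kob(\tau B)$ is an additive functor commuting with differentials. In particular $\tau(d_\Omega) = d_{\tau\Omega}$, so $\tau$ preserves graded commutators: $\tau([d, F]) = [d, \tau F]$.

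First, the composite is well-defined. Let $(f,F)\colon(\Omega,i)\to(\Omega',i')$ and $(g,G)\colon(\Omega',i')\to(\Omega'',i'')$ be $\tau$-maps, so that
\[
    i'f - \tau(f)\,i = [d,F], \qquad i''g - \tau(g)\,i' = [d,G].
\]
We must show $i''gf - \tau(gf)\,i = [d,\, Gf + \tau(g)F]$. Telescope through the middle term $\tau(g)\,i' f$:
\[
    i''gf - \tau(g)\tau(f)\,i = \bigl(i''g - \tau(g)i'\bigr)f + \tau(g)\bigl(i'f - \tau(f)i\bigr) = [d,G]\,f + \tau(g)\,[d,F].
\]
Since $f$ and $\tau(g)$ are chain maps, $[d,G]f = [d,Gf]$ and $\tau(g)[d,F] = [d,\tau(g)F]$. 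Functoriality gives $\tau(gf) = \tau(g)\tau(f)$, which closes this step.

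Second, associativity. Take a third $\tau$-map $(k,K)\colon(\Omega'',i'')\to(\Omega''',i''')$. Then
\[
    (k,K)\circ\bigl((g,G)\circ(f,F)\bigr) = \bigl(kgf,\; Kgf + \tau(k)Gf + \tau(k)\tau(g)F\bigr),
\]
\[
    \bigl((k,K)\circ(g,G)\bigr)\circ(f,F) = \bigl(kgf,\; (Kg + \tau(k)G)f + \tau(kg)F\bigr).
\]
These agree by bilinearity of composition and $\tau(kg) = \tau(k)\tau(g)$.

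Third, identities. The pair $(\id_\Omega, 0)$ is a $\tau$-map, since $i\,\id - \tau(\id)\,i = 0 = [d,0]$, again using $\tau(\id) = \id$. For any $(f,F)$,
\[
    (f,F)\circ(\id,0) = (f,\; F\,\id + \tau(f)\,0) = (f,F), \qquad (\id,0)\circ(f,F) = (f,\; 0\cdot f + \tau(\id)F) = (f,F).
\]

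No step is a real obstacle. The one point needing care is the first: checking that the composite homotopy $Gf + \tau(g)F$ has the right form, including sign conventions for the commutator if one works over $\ZZ$. Because $f$ and $\tau(g)$ are degree-$0$ chain maps, no signs appear.
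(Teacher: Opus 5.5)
Your proof is correct and follows the paper's argument. The key step is showing the composite is a $\tau$-map via the telescoping identity $i''gf - \tau(gf)i = (i''g - \tau(g)i')f + \tau(g)(i'f - \tau(f)i) = [d, Gf + \tau(g)F]$, which is exactly the paper's computation; you additionally write out the associativity and unit checks that the paper dismisses as ``easy to check''.
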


\begin{proof}
    It suffices to check that the composition is well-defined. For the above morphisms $(f, F)$ and $(g, G)$, we have
    \begin{align*}
        i'' (gf) - \tau(gf) i 
        &= (i''g - \tau(g)i')f + \tau(g)(i'f - \tau(f)i) \\ 
        &= [d, G]f + \tau(g)[d, F] \\
        &= [d, Gf + \tau(g)F]
    \end{align*}
    as desired. It is also easy to check that the composition is associative, and that the identity morphism is unital. 
\end{proof}

\begin{definition}
\label{def:inv-kh-bracket}
    For an involutive tangle diagram $T$, the $\tau$-complex $[T]^\tau := ([T], i)$, which is the pair of the Khovanov complex $[T]$ and the associated isomorphism $i\colon [T] \to \tau [T]$, is called the \textit{$\tau$-equivariant Khovanov complex} of $T$.
\end{definition}

In order to state the invariance of $[T]^\tau$, we need a notion of homotopy in $\Kob^\tau(B)$. 

\begin{definition}
\label{def:tau-equiv-homotopy}
    A \textit{$\tau$-equivariant homotopy} $(h, H)$ between $\tau$-chain maps $(f, F)$ and $(g, G)$ is a pair consisting of a degree $-1$ homotopy $h$ with $f - g = [d, h]$ and a degree $-2$ homotopy $H$ exhibiting
    \[
        F - G = [d, H] + i' h - \tau(h) i.
    \]
\end{definition}

The following diagram describes the condition given in \Cref{def:tau-equiv-homotopy}, where $F$ fills the back side of the cylinder, $G$ fills the front, and $H$ fills the interior.
\[
    \begin{tikzcd}[column sep=5em, row sep=3.5em]
    \Omega 
        \arrow[d, "i"]
        \arrow[r, "f"{name=f}, bend left]
        \arrow[r, "g"'{name=g}, bend right] 
        \arrow[Rightarrow, from=f, to=g, "h", shorten <= 2pt, shorten >= 2pt]
    & \Omega' 
        \arrow[d, "i'"] \\
    \tau\Omega 
        \arrow[r, "\tau(f)"{name=tf}, bend left]
        \arrow[r, "\tau(g)"'{name=tg}, bend right] 
        \arrow[Rightarrow, from=tf, to=tg, "\tau(h)", shorten <= 2pt, shorten >= 2pt]
    & \tau\Omega'
    \end{tikzcd}
\]
One verifies that $\tau$-equivariant homotopies give an equivalence relation on the hom-sets of $\Kob^\tau(B)$, compatible with the composition. Concepts such as \textit{$\tau$-equivariant homotopy equivalences} and \textit{$\tau$-equivariant strong deformation retractions} are defined in the obvious way. 

\subsection{Invariance}
\label{subsec:invariance}

\begin{figure}[t]
    \centering
    \resizebox{\textwidth}{!}{
    \input{tikzpictures/IR-moves}
    }
    \caption{Involutive Reidemeister moves}
    \label{fig:inv-reidemeister}
\end{figure}

In \cite{LW:2021}, Lobb and Watson introduced the \textit{involutive Reidemeister moves}, which are the involutive analogues of the ordinary Reidemeister moves (see \Cref{fig:inv-reidemeister}). It is proved that two involutive links are equivalent (in $\RR^3$) if and only if their diagrams are related by a sequence of equivariant planar isotopies and involutive Reidemeister moves. 

\begin{proposition}
\label{prop:ir-invariance}
    Let $T, T'$ be involutive tangle diagrams with $\partial T = \partial T' = B$ that are related by one of the involutive Reidemeister moves. Then there is a $\tau$-equivariant homotopy equivalence
    \[
        f^\tau\colon [T]^\tau \to [T']^\tau
    \]
    that lifts the standard homotopy equivalence $f\colon [T] \to [T']$ given in \cite{BN:2005}.
\end{proposition}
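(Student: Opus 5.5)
Given an involutive Reidemeister move $T \to T'$, we need a homotopy $F$ with $i' f - \tau(f) i = [d, F]$, where $f = [S]$ is Bar-Natan's homotopy equivalence for the underlying movie $S$ and $i, i'$ are the associated isomorphisms of \Cref{prop:inv-tng-assoc-isom}. We also need the same for a homotopy inverse $g$, together with $\tau$-equivariant homotopies $(h,H)$ exhibiting $gf \htpy \id$ and $fg \htpy \id$ in the sense of \Cref{def:tau-equiv-homotopy}.

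The key observation is that $T = \tau T$ as diagrams. Hence $\tau S$ is again a movie from $T$ to $T'$, realizing the same isotopy with the crossing orderings permuted. Up to the reordering isomorphisms, $i' \circ f \circ i^{-1}$ is therefore just $[\tau S]$ read with the transported orderings, so that
\[
i' f - \tau(f)\, i = \bigl( [\tau S] - \tau[S] \bigr)\, i
\]
after identifying via the canonical reorderings. \Cref{prop:tau-equiv-mor} then gives a homotopy between these two maps. With the R3 homotopy fixed as stated after that proposition, this homotopy, precomposed with $i$, is our $F$. The same argument applied to the reverse movie gives $G$ for $g$.

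Concretely, an involutive Reidemeister move decomposes into ordinary moves: an on-axis move (IR1, IR2, IR3 on the axis) is a single symmetric Reidemeister move, and an off-axis move is a pair of ordinary moves performed at $\tau$-symmetric positions. In the off-axis case I would write $f = f_2 f_1$ with $f_2 = \tau(f_1)$ acting at the image location. These commute strictly, since they act on disjoint crossings and are tensor products with identities, so $i' f = \tau(f) i$ holds on the nose and $F = 0$. The on-axis cases and the moves involving crossings on the axis reduce to local computations. For R1 and R2 the maps given by Bar-Natan are built from Gaussian elimination and delooping, which are visibly $\tau$-symmetric when the picture is symmetric, so again $F=0$ up to the sign convention fixing $i$ on the initial state. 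The genuinely nonstrict case is the on-axis R3-type moves, where the choice of lowermost crossing breaks symmetry. There I would use the $\Kh$-simplicity argument of \Cref{prop:tau-equiv-mor}, or an explicit homotopy.

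The main obstacle is upgrading $f$ to a $\tau$-equivariant homotopy \emph{equivalence}: we need $H$ with $0 - (Gf + \tau(g)F) = [d,H] + i h - \tau(h) i$ for the homotopy $h\colon gf \htpy \id$. The degree $-2$ obstruction $Gf + \tau(g)F - (ih - \tau(h)i)$ is a cycle in the morphism complex $\Hom^{-2}([T], \tau[T])$. I would show that it is a boundary using $\Kh$-simplicity of minimal local diagrams (the relevant Hom complexes have no homology in degree $-1$ or $-2$), or by computing it directly, since for strong deformation retractions arising from Gaussian elimination the side conditions $hi = 0$, $rh=0$, $h^2=0$ should make it vanish. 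The final step extends from local diagrams to $T$ by planar algebra composition, since all constructions are local and compatible with tensoring with the remaining (symmetric) part of the diagram.
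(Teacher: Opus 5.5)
Your proposal is correct and follows essentially the same route as the paper: $F$ and $G$ come from comparing $\tau[S]$ with $[\tau S]$ (these are strictly equal except for R3, where $\Kh$-simplicity supplies the homotopy), and the second-order homotopies $H, H'$ come from applying $\Kh$-simplicity to the cycle $Gf + \tau(g)F - (ih - \tau(h)i)$. One small slip: that obstruction has homological degree $-1$ (only $H$ itself has degree $-2$), so what you need is vanishing of the degree $-1$, $q$-degree $0$ homology of the endomorphism complex of the local diagrams, which your parenthetical already covers.
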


\begin{proof}
    This is essentially proved in \cite[Section 2.2]{Sano:2025b}. Let $S\colon T \to T'$ be one of the involutive Reidemeister moves. Using the maps defined in \cite{BN:2005} together with $\Kh$-simplicity arguments (see \cite[Definition 2.10]{Sano:2025b}), we obtain preferred chain maps $f, g$ and chain homotopies $h, h', F, G$ in $\Kob(B)$,
    \[
    \begin{tikzcd}[column sep=3em]
    {[T]}
        \arrow[r, "f", shift left] 
        \arrow["h"', loop, distance=2em, in=305, out=235] &
    {[T']}
        \arrow[l, "g", shift left] 
        \arrow["h'"', loop, distance=2em, in=305, out=235]
    \end{tikzcd}
    \]
    satisfying
    \[
        g f \htpy_{h} I, \quad 
        f g \htpy_{h'} I, \quad 
        f \htpy_{F} \tau(f),\quad 
        g \htpy_{G} \tau(g).
    \]
    Here, $gf \htpy_h I$ is short for $gf - I = [d, h]$. For the latter two equations, we have suppressed the canonical isomorphisms $i, i'$. 
    
    Furthermore, again from $\Kh$-simplicity arguments, there are degree $-2$ homotopies $H, H'$ giving
    \begin{align*}
        G f + \tau(g) F
            &\htpy_{H} h - \tau(h), \\
        F g + \tau(f) G
            &\htpy_{H'} h' - \tau(h').
    \end{align*}
    Indeed, on the one hand, we have
    \begin{align*}
        gf - \tau(gf) 
            &= (g - \tau(g))f + \tau(g)(f - \tau(f)) \\ 
            &= [d, G]f + \tau(g)[d, F] \\ 
            &= [d, Gf + \tau(g)F]
    \end{align*}
    and on the other hand,
    \begin{align*}
        gf - \tau(gf) 
            &= [d, h] - [d, \tau(h)] \\
            &= [d, h - \tau(h)]
    \end{align*}
    and therefore $Gf + \tau(g)F$ and $h - \tau(h)$ must be homotopic by $\Kh$-simplicity. Now, define
    \begin{align*}
        f^\tau = (f, F)&\colon [T]^\tau \to [T']^\tau, \\
        g^\tau = (g, G)&\colon [T']^\tau \to [T]^\tau
    \end{align*}
    and
    \begin{align*}
        h^\tau = (h, H)&\colon [T]^\tau \to [T]^\tau, \\
        {h'}^\tau = (h', H')&\colon [T']^\tau \to [T']^\tau.
    \end{align*}
    One easily verifies that $g^\tau f^\tau \htpy_{h^\tau} I$ and $f^\tau g^\tau \htpy_{{h'}^\tau} I$.
\end{proof}

Next, we consider the $I$-move. In \cite[Theorem 1.1]{BDM+:2026b}, it is stated that for isotopy invariance of involutive links in $S^3$ (rather than $\RR^3$), we additionally need to consider this move. Let $L, L'$ be involutive link diagrams that are related by the $I$-move as in \Cref{fig:I-move}. There is an obvious isomorphism $\varphi$ between the Khovanov complexes $[L], [L']$, obtained by matching the resolved diagrams one-to-one. This map already ensures the invariance of the chain homotopy type. However, the $I$-move interchanges the two fixed points on the axis, and hence exchanges the $XY$-labelings of the Seifert circles, which are used to define the (equivariant) Lee cycles (see \cite[Algorithm 3.1]{Sano:2025b}). Accordingly, we compose it with the involution $\sigma$, described for each circle as a dotted cobordism
\begin{center}
    \tikzset{every picture/.style={line width=0.75pt}} 

\begin{tikzpicture}[x=0.75pt,y=0.75pt,yscale=-1,xscale=1]

\draw  [color={rgb, 255:red, 0; green, 0; blue, 0 }  ,draw opacity=1 ] (60.6,28.61) .. controls (60.6,20.9) and (63.18,14.66) .. (66.35,14.66) .. controls (69.53,14.66) and (72.11,20.9) .. (72.11,28.61) .. controls (72.11,36.31) and (69.53,42.56) .. (66.35,42.56) .. controls (63.18,42.56) and (60.6,36.31) .. (60.6,28.61) -- cycle ;
\draw  [draw opacity=0] (65.77,14.56) .. controls (65.97,14.55) and (66.16,14.55) .. (66.35,14.55) .. controls (75.48,14.55) and (82.88,20.84) .. (82.88,28.61) .. controls (82.88,36.37) and (75.48,42.67) .. (66.35,42.67) .. controls (66.16,42.67) and (65.97,42.66) .. (65.77,42.66) -- (66.35,28.61) -- cycle ; \draw   (65.77,14.56) .. controls (65.97,14.55) and (66.16,14.55) .. (66.35,14.55) .. controls (75.48,14.55) and (82.88,20.84) .. (82.88,28.61) .. controls (82.88,36.37) and (75.48,42.67) .. (66.35,42.67) .. controls (66.16,42.67) and (65.97,42.66) .. (65.77,42.66) ;  
\draw  [draw opacity=0][dash pattern={on 0.84pt off 2.51pt}] (118.17,41.61) .. controls (115.95,39.96) and (114.32,34.52) .. (114.32,28.07) .. controls (114.32,21.63) and (115.95,16.2) .. (118.16,14.54) -- (119.59,28.07) -- cycle ; \draw  [dash pattern={on 0.84pt off 2.51pt}] (118.17,41.61) .. controls (115.95,39.96) and (114.32,34.52) .. (114.32,28.07) .. controls (114.32,21.63) and (115.95,16.2) .. (118.16,14.54) ;  
\draw  [draw opacity=0] (119.02,14.1) .. controls (119.21,14.04) and (119.4,14.02) .. (119.59,14.02) .. controls (122.5,14.02) and (124.86,20.31) .. (124.86,28.07) .. controls (124.86,35.84) and (122.5,42.13) .. (119.59,42.13) .. controls (119.4,42.13) and (119.21,42.11) .. (119.02,42.05) -- (119.59,28.07) -- cycle ; \draw   (119.02,14.1) .. controls (119.21,14.04) and (119.4,14.02) .. (119.59,14.02) .. controls (122.5,14.02) and (124.86,20.31) .. (124.86,28.07) .. controls (124.86,35.84) and (122.5,42.13) .. (119.59,42.13) .. controls (119.4,42.13) and (119.21,42.11) .. (119.02,42.05) ;  
\draw  [draw opacity=0] (120.17,14.02) .. controls (119.98,14.02) and (119.79,14.02) .. (119.59,14.02) .. controls (110.47,14.02) and (103.07,20.31) .. (103.07,28.07) .. controls (103.07,35.84) and (110.47,42.13) .. (119.59,42.13) .. controls (119.79,42.13) and (119.98,42.13) .. (120.17,42.12) -- (119.59,28.07) -- cycle ; \draw   (120.17,14.02) .. controls (119.98,14.02) and (119.79,14.02) .. (119.59,14.02) .. controls (110.47,14.02) and (103.07,20.31) .. (103.07,28.07) .. controls (103.07,35.84) and (110.47,42.13) .. (119.59,42.13) .. controls (119.79,42.13) and (119.98,42.13) .. (120.17,42.12) ;  

\draw  [color={rgb, 255:red, 0; green, 0; blue, 0 }  ,draw opacity=1 ] (159.13,28.61) .. controls (159.13,20.9) and (161.71,14.66) .. (164.89,14.66) .. controls (168.07,14.66) and (170.64,20.9) .. (170.64,28.61) .. controls (170.64,36.31) and (168.07,42.56) .. (164.89,42.56) .. controls (161.71,42.56) and (159.13,36.31) .. (159.13,28.61) -- cycle ;
\draw  [draw opacity=0] (164.31,14.56) .. controls (164.5,14.55) and (164.69,14.55) .. (164.89,14.55) .. controls (174.01,14.55) and (181.41,20.84) .. (181.41,28.61) .. controls (181.41,36.37) and (174.01,42.67) .. (164.89,42.67) .. controls (164.69,42.67) and (164.5,42.66) .. (164.31,42.66) -- (164.89,28.61) -- cycle ; \draw   (164.31,14.56) .. controls (164.5,14.55) and (164.69,14.55) .. (164.89,14.55) .. controls (174.01,14.55) and (181.41,20.84) .. (181.41,28.61) .. controls (181.41,36.37) and (174.01,42.67) .. (164.89,42.67) .. controls (164.69,42.67) and (164.5,42.66) .. (164.31,42.66) ;  
\draw  [draw opacity=0][dash pattern={on 0.84pt off 2.51pt}] (216.7,41.61) .. controls (214.49,39.96) and (212.86,34.52) .. (212.86,28.07) .. controls (212.86,21.63) and (214.48,16.2) .. (216.7,14.54) -- (218.13,28.07) -- cycle ; \draw  [dash pattern={on 0.84pt off 2.51pt}] (216.7,41.61) .. controls (214.49,39.96) and (212.86,34.52) .. (212.86,28.07) .. controls (212.86,21.63) and (214.48,16.2) .. (216.7,14.54) ;  
\draw  [draw opacity=0] (217.55,14.1) .. controls (217.74,14.04) and (217.93,14.02) .. (218.13,14.02) .. controls (221.04,14.02) and (223.4,20.31) .. (223.4,28.07) .. controls (223.4,35.84) and (221.04,42.13) .. (218.13,42.13) .. controls (217.93,42.13) and (217.74,42.11) .. (217.55,42.05) -- (218.13,28.07) -- cycle ; \draw   (217.55,14.1) .. controls (217.74,14.04) and (217.93,14.02) .. (218.13,14.02) .. controls (221.04,14.02) and (223.4,20.31) .. (223.4,28.07) .. controls (223.4,35.84) and (221.04,42.13) .. (218.13,42.13) .. controls (217.93,42.13) and (217.74,42.11) .. (217.55,42.05) ;  
\draw  [draw opacity=0] (218.71,14.02) .. controls (218.51,14.02) and (218.32,14.02) .. (218.13,14.02) .. controls (209,14.02) and (201.61,20.31) .. (201.61,28.07) .. controls (201.61,35.84) and (209,42.13) .. (218.13,42.13) .. controls (218.32,42.13) and (218.51,42.13) .. (218.71,42.12) -- (218.13,28.07) -- cycle ; \draw   (218.71,14.02) .. controls (218.51,14.02) and (218.32,14.02) .. (218.13,14.02) .. controls (209,14.02) and (201.61,20.31) .. (201.61,28.07) .. controls (201.61,35.84) and (209,42.13) .. (218.13,42.13) .. controls (218.32,42.13) and (218.51,42.13) .. (218.71,42.12) ;  

\draw  [fill={rgb, 255:red, 0; green, 0; blue, 0 }  ,fill opacity=1 ] (76.32,28.56) .. controls (76.32,27.7) and (77.01,27) .. (77.87,27) .. controls (78.72,27) and (79.42,27.7) .. (79.42,28.56) .. controls (79.42,29.41) and (78.72,30.11) .. (77.87,30.11) .. controls (77.01,30.11) and (76.32,29.41) .. (76.32,28.56) -- cycle ;
\draw  [fill={rgb, 255:red, 0; green, 0; blue, 0 }  ,fill opacity=1 ] (204.49,27.82) .. controls (204.49,26.96) and (205.19,26.27) .. (206.04,26.27) .. controls (206.9,26.27) and (207.6,26.96) .. (207.6,27.82) .. controls (207.6,28.68) and (206.9,29.37) .. (206.04,29.37) .. controls (205.19,29.37) and (204.49,28.68) .. (204.49,27.82) -- cycle ;

\draw (136.07,20.42) node [anchor=north west][inner sep=0.75pt]    {$-$};
\draw (13,18.4) node [anchor=north west][inner sep=0.75pt]    {$\sigma \ =\ $};

\end{tikzpicture}
\end{center}
This corresponds to the Frobenius algebra automorphism
\[
    \sigma\colon A \to A;\quad 1 \mapsto 1,\quad X \mapsto Y = X - h.
\]
(Note that $\sigma$ is indeed a Frobenius algebra isomorphism, since we are working over $\FF_2$. See \cite{KhSano:2026}.) The effect of $\sigma$ is that it swaps $\bullet$ with $\circ$. We define the cobordism map assigned to the $I$-move to be $\sigma \varphi = \varphi \sigma$.

\begin{figure}
    \begin{center}
        \tikzset{every picture/.style={line width=0.75pt}} 

\begin{tikzpicture}[x=0.75pt,y=0.75pt,yscale=-1,xscale=1]

\draw [color={rgb, 255:red, 155; green, 155; blue, 155 }  ,draw opacity=1 ][line width=0.75]  [dash pattern={on 0.84pt off 2.51pt}]  (48.5,16.68) -- (48.5,128.44) ;
\draw  [fill={rgb, 255:red, 255; green, 255; blue, 255 }  ,fill opacity=1 ] (22.4,55.71) -- (74.6,55.71) -- (74.6,89.41) -- (22.4,89.41) -- cycle ;

\draw  [draw opacity=0] (14.2,55.42) .. controls (14.47,44.34) and (29.72,35.41) .. (48.5,35.41) .. controls (67.25,35.41) and (82.49,44.32) .. (82.8,55.37) -- (48.5,55.71) -- cycle ; \draw   (14.2,55.42) .. controls (14.47,44.34) and (29.72,35.41) .. (48.5,35.41) .. controls (67.25,35.41) and (82.49,44.32) .. (82.8,55.37) ;  
\draw  [draw opacity=0] (14.2,55.2) .. controls (14.22,64.56) and (18.16,72.14) .. (23.03,72.21) -- (23.1,55.12) -- cycle ; \draw   (14.2,55.2) .. controls (14.22,64.56) and (18.16,72.14) .. (23.03,72.21) ;  
\draw  [draw opacity=0] (82.8,55.37) .. controls (82.78,64.72) and (79.09,72.41) .. (74.35,73.42) -- (73.29,55.29) -- cycle ; \draw   (82.8,55.37) .. controls (82.78,64.72) and (79.09,72.41) .. (74.35,73.42) ;  

\draw  [fill={rgb, 255:red, 0; green, 0; blue, 0 }  ,fill opacity=1 ] (47.21,35.1) .. controls (47.21,34.26) and (47.89,33.58) .. (48.73,33.58) .. controls (49.56,33.58) and (50.24,34.26) .. (50.24,35.1) .. controls (50.24,35.93) and (49.56,36.61) .. (48.73,36.61) .. controls (47.89,36.61) and (47.21,35.93) .. (47.21,35.1) -- cycle ;

\draw [color={rgb, 255:red, 155; green, 155; blue, 155 }  ,draw opacity=1 ][line width=0.75]  [dash pattern={on 0.84pt off 2.51pt}]  (176.1,16.68) -- (176.1,128.44) ;
\draw  [fill={rgb, 255:red, 255; green, 255; blue, 255 }  ,fill opacity=1 ] (150,55.71) -- (202.2,55.71) -- (202.2,89.41) -- (150,89.41) -- cycle ;

\draw  [draw opacity=0] (141.8,94.99) .. controls (142.09,105.18) and (157.34,113.39) .. (176.1,113.39) .. controls (194.83,113.39) and (210.06,105.21) .. (210.39,95.04) -- (176.1,94.7) -- cycle ; \draw   (141.8,94.99) .. controls (142.09,105.18) and (157.34,113.39) .. (176.1,113.39) .. controls (194.83,113.39) and (210.06,105.21) .. (210.39,95.04) ;  
\draw  [draw opacity=0] (141.8,95.16) .. controls (141.82,86.55) and (145.76,79.57) .. (150.64,79.51) -- (150.7,95.24) -- cycle ; \draw   (141.8,95.16) .. controls (141.82,86.55) and (145.76,79.57) .. (150.64,79.51) ;  
\draw  [draw opacity=0] (210.4,95.01) .. controls (210.37,86.35) and (206.64,79.24) .. (201.87,78.38) -- (200.89,95.09) -- cycle ; \draw   (210.4,95.01) .. controls (210.37,86.35) and (206.64,79.24) .. (201.87,78.38) ;  

\draw  [fill={rgb, 255:red, 0; green, 0; blue, 0 }  ,fill opacity=1 ] (174.81,113.07) .. controls (174.81,112.24) and (175.49,111.56) .. (176.33,111.56) .. controls (177.16,111.56) and (177.84,112.24) .. (177.84,113.07) .. controls (177.84,113.91) and (177.16,114.59) .. (176.33,114.59) .. controls (175.49,114.59) and (174.81,113.91) .. (174.81,113.07) -- cycle ;

\draw (48.5,72.56) node    {$L$};
\draw (176.1,72.56) node    {$L$};

\end{tikzpicture}
    \end{center}
    \caption{The $I$-move}
    \label{fig:I-move}
\end{figure}

\begin{proposition}
\label{prop:i-move-invariance}
    Let $L, L'$ be involutive link diagrams that are related by the $I$-move. Then $\sigma \varphi$ gives a $\tau$-equivariant homotopy equivalence
    \[
        \sigma \varphi\colon [L]^\tau \to [L']^\tau
    \]
    that lifts the map given in \cite[Proposition 3.6]{SS:2024} (under the TQFT over $\FF_2$).
\end{proposition}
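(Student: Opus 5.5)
We will show that $\sigma\varphi$ is in fact a \emph{strictly} $\tau$-equivariant isomorphism, i.e.\ that $(\sigma\varphi, 0)$ is a morphism in $\Kob^\tau(\emptyset)$ with inverse $(\varphi^{-1}\sigma, 0)$. Once this is established, $\sigma\varphi$ is automatically a $\tau$-equivariant homotopy equivalence (indeed an isomorphism), with trivial homotopies $h = H = 0$.

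\textbf{Step 1: $\sigma\varphi$ is an isomorphism of complexes.} The $I$-move is realized by an isotopy of $S^2$ that drags an arc across the point at infinity. Under this isotopy the crossings of $L$ and $L'$ correspond bijectively, with the crossing order transported along. Consequently each resolution of $L$ is isotopic in $S^2$ to the corresponding resolution of $L'$. Since $B = \emptyset$, objects of $\Cob(\emptyset)$ are determined up to isomorphism by their number of circles. Hence $\varphi$ is defined statewise, and it intertwines the differentials: every merge or split cobordism of $[L]$ is carried to the corresponding one of $[L']$.

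The map $\sigma$ is defined circlewise. It is the automorphism of each circle object given by the dotted cylinder minus $h$ times the undotted cylinder. By the (NC) relation, $\sigma^2 = \id$, since $Y - h = X - 2h = X$ over $\FF_2$. In terms of $A$, $\sigma$ is the Frobenius automorphism $1\mapsto 1$, $X\mapsto Y$. A Frobenius algebra automorphism commutes with multiplication and comultiplication, so $\sigma$ commutes with all merge and split cobordisms. It is therefore a chain automorphism of each complex, and it commutes with $\varphi$.

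\textbf{Step 2: strict equivariance.} We must check that $i'\sigma\varphi = \tau(\sigma\varphi)\, i$. The maps $i, i'$ are the reordering isomorphisms of \Cref{prop:inv-tng-assoc-isom}, which over $\FF_2$ are just statewise identifications. The map $\tau(\varphi)$ is the identification induced by the $\tau$-image of the $I$-move isotopy. Since $\tau$ commutes with the isotopy up to isotopy, both $i'\varphi$ and $\tau(\varphi) i$ match states by the same crossing bijection, so they agree. For $\sigma$, note that $\sigma$ is given by a local cobordism on each circle, and its $\tau$-image is again such a cobordism on the image circle. Hence $\tau(\sigma) = \sigma$, and $\sigma$ commutes with the reorderings $i, i'$.

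\textbf{Step 3: lifting the map of \cite[Proposition 3.6]{SS:2024}.} Finally, we apply the tautological TQFT and set $h = t = 0$, or work over $\FF_2$ as needed. The map $\sigma\varphi$ then becomes the composite of the statewise identification with $X\mapsto X+h$ on each circle, which is exactly the map of \cite[Proposition 3.6]{SS:2024}.

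The main obstacle I anticipate is Step 2's claim that $\varphi$ is $\tau$-compatible on the nose. The point is that the isotopy across $\infty$ and its $\tau$-conjugate might induce different identifications of circles. I would handle this by noting that over $\FF_2$ the only data in $\varphi$ is the bijection of circles in each resolution, and this bijection is forced by the positions of the circles in $S^2$. If that argument fails, I would instead fall back on a $\Kh$-simplicity argument, as in \Cref{prop:ir-invariance}, to produce a homotopy $F$ filling the square.
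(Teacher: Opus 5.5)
Your strategy matches the paper's. The paper's whole proof is that $\sigma$ is the same local cobordism on every circle, so it commutes with $\tau$, and hence $\sigma\varphi$ is strictly $\tau$-equivariant; $\varphi$'s own equivariance is left implicit. Your Steps 1--2 spell this out, and your extra checks are sound. In particular, $\varphi$ intertwines the differentials because morphisms in $\Cob(\emptyset)$ are determined by abstract topology after neck-cutting. Also $i'\varphi=\tau(\varphi)\,i$ holds because the $I$-move is itself $\tau$-symmetric and does not touch the crossings.

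There is, however, a concrete error in your cobordism formula for $\sigma$. ``Dotted cylinder minus $h$ times undotted cylinder'' is the hollow-dotted cylinder, i.e.\ multiplication by $Y$. It sends $1\mapsto Y$ and $X\mapsto XY=t$, so it is not $\sigma$. It is not even invertible over $\FF_2[h,t]$: its determinant in the basis $\{1,X\}$ is $t$. Taken literally, your Step 1 claim that $\sigma\varphi$ is an isomorphism would therefore fail, and $\sigma^2=\id$ would not hold.

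The correct $\sigma$, as drawn in the paper, is the neck-cut combination $\mathrm{cup}\circ\mathrm{cap}_\bullet-\mathrm{cup}_\bullet\circ\mathrm{cap}$. It acts by $a\mapsto\epsilon(Xa)\cdot 1-\epsilon(a)\,X$, i.e.\ $1\mapsto 1$ and $X\mapsto h-X=Y$ over $\FF_2$. Equivalently, over $\FF_2$ it is the identity cylinder plus $h$ times the cut cylinder. The rest of your argument only uses two facts: that $\sigma$ is the Frobenius automorphism $1\mapsto 1$, $X\mapsto Y$ (your second, correct description), and that it is a local cobordism on each circle. So everything goes through once the formula is fixed.

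In Step 3, do not set $h=0$. That specialization turns $\sigma$ into the identity and erases the $X\leftrightarrow Y$ swap, which is the reason for composing $\varphi$ with $\sigma$ at all. You should keep $h$, working over $\FF_2$ with at most $t=0$.
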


\begin{proof}
    Since $\sigma$ is given by the same local cobordism on every circle, it commutes with $\tau$, and hence $\sigma \varphi$ is strictly $\tau$-equivariant.
\end{proof}

\begin{remark}
\label{rem:I-move}
    \cite[Theorem 2]{Sano:2025b} lacked the observation for the $I$-move, and \Cref{prop:i-move-invariance} fills the gap. Moreover, the equivariant Lee cycles of $L$ and $L'$ correspond under $\sigma \varphi$, hence their behavior under cobordisms \cite[Proposition 4.29]{Sano:2025b} remains valid.
\end{remark}

Combining \Cref{prop:ir-invariance,prop:i-move-invariance}, the proof of \Cref{thm:inv-tang-invariance} is complete.

\subsection{Categorical formulation}

In order to state the correspondence $T \mapsto [T]^\tau$ in the categorical setting, we introduce the \textit{category of involutive tangle diagrams}.

\begin{definition}
    An \textit{involutive movie} $S$ between involutive tangle diagrams $T$ and $T'$ with $\del T = \del T' = B$ is a movie in which every still is an involutive tangle diagram (see \cite[Theorem 1.2]{BDM+:2026b}, applied with the boundary held fixed, which states that such $S$ is a sequence of equivariant isotopies, equivariant Reidemeister moves, and equivariant Morse moves). The involutive tangle diagrams and involutive movies form the \textit{category of involutive tangle diagrams}, denoted $\Diag^\tau(B)$.
\end{definition}

Both $\Diag^\tau(B)$ and $\Kob^\tau(B)$ admit forgetful functors $U$ to $\Diag(B)$ and $\Kob(B)$ respectively. Given a commutative diagram of functors
\[
\begin{tikzcd}
    \Diag^\tau(B) \arrow[d, "U"'] \arrow[r, "\tilde{\Phi}"] & \Kob^\tau(B) \arrow[d, "U"] \\
    \Diag(B) \arrow[r, "\Phi"] & \Kob(B),
\end{tikzcd}
\]
 we say $\tilde{\Phi}$ \textit{lifts} $\Phi$. 

\begin{proposition}
\label{prop:inv-functoriality}
    The $\tau$-equivariant Khovanov bracket extends to a functor
    \[
        [\cdot]^\tau \colon \Diag^\tau(B) \to \Kob^\tau(B)
    \]
    that lifts the Khovanov bracket functor
    \[
        [\cdot] \colon \Diag(B) \to \Kob(B).
    \]
\end{proposition}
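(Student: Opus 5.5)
We define $[\cdot]^\tau$ on objects by $T \mapsto [T]^\tau = ([T], i_T)$ as in \Cref{def:inv-kh-bracket}. On morphisms, an involutive movie $S\colon T \to T'$ has underlying movie $US$, and we must supply a homotopy $F_S$ with $i_{T'}[S] - \tau([S])\, i_T = [d, F_S]$, setting $[S]^\tau := ([S], F_S)$. The forgetful condition $U \circ [\cdot]^\tau = [\cdot] \circ U$ then holds by construction. Since $[\cdot]$ is a functor and composition in $\Kob^\tau(B)$ is $(g,G)\circ(f,F) = (gf, Gf + \tau(g)F)$, the natural approach is to define $F_S$ on elementary pieces and then extend by this composition rule.

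\textbf{Step 1: elementary pieces.} Using \cite[Theorem 1.2]{BDM+:2026b} (with the boundary held fixed), every involutive movie is a sequence of equivariant planar isotopies, equivariant Reidemeister moves and equivariant Morse moves. Each equivariant move is realized in the underlying movie either as a single move on the axis, or as a pair of moves $S_0$ and $\tau S_0$ performed off the axis, in some chosen order. For each elementary move $S_0$ between ordinary diagrams, \Cref{prop:tau-equiv-mor} provides a fixed homotopy $\Phi_{S_0}$ with $\tau[S_0] \htpy [\tau S_0]$; this homotopy is zero except for R3, where it was fixed once and for all. Each intermediate still of the underlying movie need not be involutive, so I would instead work with $\tau$-maps between the ordinary complexes of consecutive stills, transporting along the reordering isomorphisms $i$ of \Cref{prop:inv-tng-assoc-isom}. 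Concretely, conjugating by the crossing-reordering isomorphisms turns $\Phi_{S_0}$ into a homotopy $F_{S_0}$ between $i'[S_0]$ and $\tau[S_0]\, i$ in the appropriate sense. We then set $F_S$ for a full movie to be the composite given by the formula $Gf + \tau(g)F$ iterated along the sequence of elementary moves. The computation in the proof that $\Kob^\tau(B)$ is a category shows that the result is again a valid homotopy, so $([S], F_S)$ is a $\tau$-map.

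\textbf{Step 2: functoriality.} Functoriality is then essentially formal. Morphisms in $\Diag^\tau(B)$ are movies, composed by concatenation, and the identity is the empty movie. We have defined $[S]^\tau$ as the composite in $\Kob^\tau(B)$ of the $\tau$-maps of the elementary steps. Because that composition is associative (previous proposition), $[S' S]^\tau = [S']^\tau [S]^\tau$ holds, and $[\id]^\tau = (\id, 0)$. For the involutive Reidemeister moves, one can check that the resulting $\tau$-map agrees with the $f^\tau = (f,F)$ of \Cref{prop:ir-invariance}.

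\textbf{Main obstacle: well-definedness.} An equivariant planar isotopy, and a pair of simultaneous off-axis moves, admit several decompositions into elementary moves. We need $F_S$ to depend only on the movie $S$ as a morphism of $\Diag^\tau(B)$. My first attempt would be to take $\Diag^\tau(B)$ literally as free on its generating moves, with morphisms being sequences, and to fix a canonical ordering convention for each pair $(S_0, \tau S_0)$, say $S_0$ first. I would then check that for commuting distant moves the two orders give identical chain maps and identical $F$. This holds because far-commutativity in $[\cdot]$ is strict and the only nonzero $\Phi$, coming from R3, is local and hence commutes with distant moves. If a finer equivalence is intended on $\Diag^\tau(B)$, I would appeal to $\Kh$-simplicity as in \Cref{prop:ir-invariance} to show that different choices change $F_S$ only by $[d,H]$-type terms. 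However, that yields functoriality only up to $\tau$-homotopy, so I expect to keep the strict, sequence-level definition.
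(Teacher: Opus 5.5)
Your architecture is the paper's. You attach a $\tau$-map to each elementary equivariant move, define $[S]^\tau$ as their composite in $\Kob^\tau(B)$, and obtain $[S'\circ S]^\tau=[S']^\tau\circ[S]^\tau$ from concatenation. Your ``first attempt'' at well-definedness, where a morphism comes with its decomposition into elementary moves, is exactly what the paper tacitly assumes. So no $\Kh$-simplicity argument is needed at that point.

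\textbf{Step 1 fails as written for an off-axis pair $S_0$, $\tau S_0$.} The intermediate still $T_1$ (after $S_0$, before $\tau S_0$) is not involutive. So $[T_1]$ and $\tau[T_1]=[\tau T_1]$ are complexes of different diagrams, and there is no reordering isomorphism between them. Consequently there are no ``$\tau$-maps between consecutive stills'', and $i'[S_0]-\tau[S_0]\,i$ has no meaning.

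The $\tau$-map has to be attached to the whole equivariant move between the involutive diagrams $T$ and $T'$. The paper does this by quoting the $\tau$-maps $f^\tau=(f,F)$ of \Cref{prop:ir-invariance} (the content of \Cref{thm:inv-tang-invariance}). For equivariant planar isotopies it uses the canonical identification, and for equivariant Morse moves the strictly equivariant maps.

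If you would rather build $F$ from \Cref{prop:tau-equiv-mor}, treat the pair as a unit:
\begin{itemize}
\item For $f=[\tau S_0]\circ[S_0]$ one has $\tau(f)=\tau[\tau S_0]\circ\tau[S_0]$.
\item Replace each factor by the Bar-Natan map of the mirrored move, using $\Phi_{\tau S_0}$ and $\Phi_{S_0}$. Combine the two homotopies by the same rule $Gf+\tau(g)F$.
\item Then use strict far-commutativity of moves supported in disjoint disks: both orders give the same planar-algebra composite, up to the reordering isomorphisms $i$, $i'$ at the involutive ends.
\end{itemize}
This also makes your ordering convention for the pair unnecessary.

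Finally, your claim that the resulting $\tau$-map ``agrees'' with the $f^\tau$ of \Cref{prop:ir-invariance} is only guaranteed up to a $\tau$-homotopy of the form $(0,H)$. Two homotopies $F$, $F'$ for the same $f$ differ by a degree $-1$ cycle, which $\Kh$-simplicity shows is a boundary $[d,H]$, not that it vanishes. This is harmless, since the proposition only asks for some strict lift of $[\cdot]$.
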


\begin{proof}
    Let $S\colon T \to T'$ be an involutive movie, decomposed into elementary movies $S = S_N \circ \cdots \circ S_1$. Each $S_i$ is either an equivariant planar isotopy, an involutive Reidemeister move, or an equivariant Morse move. The corresponding map $[S_i]^\tau$ for the first is the canonical identification, that for the second is given in \Cref{thm:inv-tang-invariance}, and that for the third is the obvious strictly $\tau$-equivariant map. Define $[S]^\tau$ as the composition of these maps. This satisfies $[S' \circ S]^\tau = [S']^\tau \circ [S]^\tau$ from the construction.
\end{proof}

\begin{remark}
    It is natural to expect that the functor $[\cdot]^\tau$ descends to a (projective) functor
    \[
        [\cdot]^\tau \colon \Diag^\tau_{/i}(B) \to h\Kob^\tau(B)
    \]
    where the source category is the quotient category of $\Diag^\tau(B)$ modulo the \textit{equivariant movie moves} introduced in \cite[Theorem 1.4]{BDM+:2026b}, and the target category is the homotopy category of $\Kob^\tau(B)$. This would then promote $[\cdot]^\tau$ to a functor on the \textit{category of involutive tangles}. We shall not pursue the proof in this paper.     
\end{remark}

In \cite{Sano:2025b}, we introduced the concept of \textit{(simple) isotopy-equivariant cobordisms} for involutive links. Here, we define the analogue for tangles.

\begin{definition}
    A movie $S\colon T \to T'$ between involutive tangle diagrams $T, T'$ is \textit{isotopy-equivariant} if $S$ and $\tau S$ are related by (ordinary) movie moves; in other words, if the corresponding cobordisms are (non-equivariantly) isotopic. 
\end{definition}

The following is a generalization of \cite[Proposition 4.28]{Sano:2025b}. 

\begin{proposition}
\label{prop:ie-movie-map}
    Given an isotopy-equivariant movie $S\colon T \to T'$, there is a map between the corresponding $\tau$-equivariant Khovanov complexes,
    \[
        [S]^\tau \colon [T]^\tau \to [T']^\tau
    \]
    that lifts the standard cobordism map
    \[
        [S] \colon [T] \to [T'].
    \]
\end{proposition}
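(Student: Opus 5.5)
We need a pair $([S], F)$ where $F$ is a homotopy with $i'[S] - \tau([S])\,i = [d,F]$. The plan is to build $F$ by chaining three homotopies, using the canonical identifications $i, i'$ to compare the relevant maps.

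First, \Cref{prop:tau-equiv-mor} gives a homotopy $F_1$, fixed once and for all as stated after that proposition, exhibiting $\tau[S] \htpy [\tau S]$. We suppress the reordering isomorphisms here, exactly as in the proof of \Cref{prop:ir-invariance}.

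Second, isotopy-equivariance means that $S$ and $\tau S$, viewed as morphisms $T \to T'$ in $\Diag(B)$, are related by a finite sequence of Carter--Saito movie moves. The invariance theorem of \cite[Theorem 5]{BN:2005} then says that $[S]$ and $[\tau S]$ agree in $h\Kob(B)$ up to sign. Since we work over $\FF_2$, the sign is irrelevant, so we get a homotopy $F_2$ with $[\tau S] \htpy_{F_2} [S]$. Under the identification, the composite $F := F_2 + F_1$, with $i, i'$ restored, satisfies
\[
    i'[S] - \tau([S])\,i = [d, F].
\]
More precisely, $[\tau S]$ is a map $[\tau T] = \tau[T] \to \tau[T']$. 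Since $\tau T = T$ as diagrams up to crossing order, the comparison with $[S]$ is made by conjugating with $i$ and $i'$. So the homotopy we actually need is between $i'[S]$ and $[\tau S]\, i$, and one checks this is obtained from $F_2$ by the same conjugation. We then set $[S]^\tau := ([S], F)$, which visibly lifts $[S]$.

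The main obstacle is the second step: making the homotopy from the movie-move invariance explicit, and checking that it is compatible with the reordering isomorphisms $i, i'$. Bar-Natan's proof of invariance under movie moves proceeds by $\Kh$-simplicity of the small tangles involved. It yields, for each movie move, a homotopy between the two local chain maps, and these homotopies compose along the sequence of moves. I would handle the crossing-order issue by observing that reordering the crossings of intermediate stills induces isomorphisms commuting with all elementary cobordism maps up to the canonical sign. Over $\FF_2$ that sign is trivial, so the identification of $\tau[T]$ with $[T]$ via $i$ intertwines $[\tau S]$ with the map obtained by running the movie $\tau S$ on $T$ with its original ordering.

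If one wants $F$ to be canonical rather than merely to exist, the remaining worry is the choice of movie-move sequence. The statement only asserts the existence of a lift, however, so any choice suffices and no further coherence is needed.
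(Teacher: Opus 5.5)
Your proposal is correct and follows the paper's proof: you chain the homotopy $[S]\htpy[\tau S]$ from \cite[Theorem 5]{BN:2005} with the homotopy $[\tau S]\htpy\tau[S]$ from \Cref{prop:tau-equiv-mor}, and set $[S]^\tau := ([S], F)$. Your remarks on signs over $\FF_2$, on conjugating by the reordering isomorphisms $i, i'$, and on the non-canonical choice of movie-move sequence make explicit what the paper suppresses or defers to its subsequent remark.
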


\begin{proof}
    From the assumption, $S$ and $\tau S$ are isotopic, and \cite[Theorem 5]{BN:2005} gives a homotopy $[S] \htpy [\tau S]$. Furthermore, \Cref{prop:tau-equiv-mor} gives a homotopy $[\tau S] \htpy \tau[S]$, so there is a homotopy $h_S$ exhibiting $[S] \htpy \tau [S]$. Define $[S]^\tau := ([S], h_S)$. 
\end{proof}

\begin{remark}
    In the proof of \Cref{prop:ie-movie-map}, there is an ambiguity in the choice of an isotopy from $S$ to $\tau S$, resulting in an ambiguity in the choice of the homotopy $h_S$. Hence the uniqueness of $([S], h_S)$ (up to homotopy) is not established. To fix this, we need higher coherence of the functoriality of Khovanov homology, which we shall not pursue. 
\end{remark}

\subsection{Involutive Khovanov complex}
\label{subsec:tau-cone}

Here, we restrict to the case $B = \emptyset$ and consider involutive link diagrams. First, for any $X \in \Cob(\emptyset)$ there is a natural isomorphism
\[
    J_\tau: X \to \tau X
\]
given by cylinders that connect each circle $C \subset X$ to the counterpart $\tau C \subset \tau X$. Here, since we are considering the dotted category, each cylinder can be neck-cut into pieces, and there is no ambiguity due to the knotting of the cylinders. 
We have $J_\tau \circ J_\tau = \id$, and the action $\tau$ on the morphisms of $\Cob(\emptyset)$ can be realized as the conjugation by $J_\tau$. Namely, for any cobordism $S: X \to Y$, the following diagram commutes
\[
    \begin{tikzcd}
    X \arrow[r, "S"] \arrow[d, "J_\tau"'] & 
    Y \arrow[d, "J_\tau"] \\
    \tau X \arrow[r, "\tau S"] & 
    \tau Y.
    \end{tikzcd}
\]
This extends to a natural isomorphism in $\Kob(\emptyset)$: for any complex $\Omega$, an isomorphism
\[
    J_\tau\colon \Omega \to \tau \Omega
\]
is given by connecting each summand $X$ of $\Omega$ with the summand $\tau X$ of $\tau \Omega$ by $J_\tau$. Again, for any morphism $f: \Omega \to \Omega'$, the following diagram commutes
\[
    \begin{tikzcd}
    \Omega \arrow[r, "f"] \arrow[d, "J_\tau"'] & 
    \Omega' \arrow[d, "J_\tau"] \\
    \tau \Omega \arrow[r, "\tau f"] & 
    \tau \Omega'.
    \end{tikzcd}
\]
Finally, for a $\tau$-complex $(\Omega, i)$ in $\Kob^\tau(\emptyset)$, define an automorphism $I_\tau$ on $\Omega$ by the composition
\[
    \begin{tikzcd}
    \Omega \arrow[r, "J_\tau"]
    & \tau \Omega \arrow[r, "i^{-1}"]
    & \Omega.
    \end{tikzcd}
\]

\begin{proposition}
\label{prop:I-tau-involution}
    $I_\tau$ is an involution on $\Omega$. 
\end{proposition}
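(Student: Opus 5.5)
We must show that $I_\tau = i^{-1} J_\tau$ is a chain map and that $I_\tau^2 = \id_\Omega$. The plan is to move one factor past the other using the naturality of $J_\tau$, and then cancel using the two relations $\tau(i) = i^{-1}$ and $J_\tau \circ J_\tau = \id$.

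First, $I_\tau$ is a chain map. The map $i$ is an isomorphism of complexes, so $i^{-1}$ is a chain map. The map $J_\tau$ is a chain map by naturality applied to the differential $d$, which says $\tau(d) J_\tau = J_\tau d$.

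Next, compute the square:
\[
    I_\tau^2 = i^{-1} J_\tau\, i^{-1} J_\tau .
\]
The middle factor $i^{-1}\colon \tau\Omega \to \Omega$ is a morphism in $\Kob(\emptyset)$. Naturality of $J_\tau$ applied to it gives
\[
    J_\tau \circ i^{-1} = \tau(i^{-1}) \circ J_\tau ,
\]
where the right-hand $J_\tau$ is the one for $\tau\Omega$, mapping to $\tau\tau\Omega = \Omega$. Since $\tau(i) = i^{-1}$ and $\tau$ is a functor, $\tau(i^{-1}) = \tau(i)^{-1} = i$. Substituting,
\[
    I_\tau^2 = i^{-1}\, i\, J_\tau J_\tau = J_\tau \circ J_\tau = \id_\Omega .
\]

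The one point needing care is to check that the composite $J_{\tau\Omega}\circ J_\Omega\colon \Omega \to \tau\Omega \to \Omega$ really is the identity. This is the statement $J_\tau\circ J_\tau=\id$ from the text, applied summandwise: on a summand $X$, it composes the cylinder from $X$ to $\tau X$ with the cylinder from $\tau X$ back to $X$. The composite of two such cylinders is the identity cylinder. It is isotopic to it, and over $\FF_2$ no signs appear, so after neck-cutting it is exactly the identity cylinder. Everything else is formal.
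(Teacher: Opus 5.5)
Your proof is correct and is essentially the paper's argument. The paper writes $J_\tau i^{-1} J_\tau = \tau(i^{-1})$, since conjugation by $J_\tau$ realizes $\tau$ on morphisms, and then uses $\tau(i^{-1}) = (i^{-1})^{-1}$; this is your naturality step combined with $J_\tau \circ J_\tau = \id$, and your extra check that $I_\tau$ is a chain map is something the paper leaves implicit.
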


\begin{proof}
    Compute 
    \begin{align*}
        I_\tau^2 
            &= i^{-1} (J_\tau i^{-1} J_\tau) \\ 
            &= i^{-1} \tau(i^{-1}) \\ 
            &= i^{-1} (i^{-1})^{-1} \\ 
            &= 1.
    \end{align*}
\end{proof}

\begin{lemma}
\label{lem:tau-induced-maps}
    A $\tau$-map $(f, F)\colon (\Omega, i) \to (\Omega', i')$ in $\Kob^\tau(\emptyset)$ gives a homotopy 
    \[
        \tilde{F} := -i'^{-1} F i^{-1} J_\tau
    \]
    exhibiting 
    \[
        I_\tau f \htpy f I_\tau
    \]
    in $\Kob(\emptyset)$. Furthermore, a homotopy $(h, H)$ between two $\tau$-maps $(f, F), (g, G)$ in $\Kob^\tau(\emptyset)$ gives a degree $-2$ homotopy
    \[
        \tilde{H} := -i'^{-1} H i^{-1} J_\tau
    \]
    exhibiting
    \[
        \tilde{F} - \tilde{G} \htpy I_\tau h - h I_\tau
    \]
    in $\Kob(\emptyset)$.
\end{lemma}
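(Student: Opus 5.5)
The plan is a direct computation using only the defining identities: the $\tau$-map condition $i' f - \tau(f) i = [d, F]$ and the naturality of $J_\tau$. We also use that $i$, $i'$, $J_\tau$ are chain isomorphisms, so they commute with $d$ in the sense that conjugating a commutator $[d, X]$ by them gives a commutator again. Recall $I_\tau = i^{-1} J_\tau$ on $\Omega$ and $I'_\tau = i'^{-1} J_\tau$ on $\Omega'$.

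For the first claim, compute
\[
    f I_\tau - I_\tau f = f i^{-1} J_\tau - i'^{-1} J_\tau f.
\]
By naturality of $J_\tau$ we have $J_\tau f = \tau(f) J_\tau$, so the second term equals $i'^{-1}\tau(f) J_\tau$. Insert $i'^{-1} i' = 1$ in the first term and factor:
\[
    f I_\tau - I_\tau f = i'^{-1}\bigl(i' f - \tau(f) i\bigr) i^{-1} J_\tau = i'^{-1}[d, F] i^{-1} J_\tau .
\]
Since $i'^{-1}$, $i^{-1}$ and $J_\tau$ are chain maps, this equals $[d, i'^{-1} F i^{-1} J_\tau]$. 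Hence $I_\tau f - f I_\tau = [d, \tilde F]$ with $\tilde F = -i'^{-1} F i^{-1} J_\tau$. Over $\FF_2$ the sign is immaterial; I will keep it only to match the stated convention.

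For the second claim, take $F - G = [d, H] + i' h - \tau(h) i$, multiply on the left by $-i'^{-1}$ and on the right by $i^{-1} J_\tau$. The $[d,H]$ term becomes $[d, \tilde H]$, again because the conjugating maps are chain isomorphisms. For $H$ of degree $-2$, the graded commutator $dH - Hd$ (possibly with signs) is preserved by this conjugation. The remaining term is
\[
    -i'^{-1}\bigl(i' h - \tau(h) i\bigr) i^{-1} J_\tau = -h i^{-1} J_\tau + i'^{-1}\tau(h) J_\tau .
\]
Naturality of $J_\tau$ applies to any morphism, including the homotopy $h$, so $\tau(h) J_\tau = J_\tau h$. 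The remaining term is therefore $-h I_\tau + I_\tau h$, and we get $\tilde F - \tilde G = [d, \tilde H] + (I_\tau h - h I_\tau)$, as stated.

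The only point needing care is that naturality of $J_\tau$ was stated for chain maps but is used here for arbitrary graded morphisms such as $h$. It holds because it is componentwise naturality in $\Cob(\emptyset)$, valid for every cobordism. The sign bookkeeping for the degree $-2$ commutator is vacuous over $\FF_2$, and over $\ZZ$ it works because the conjugating maps have degree $0$.
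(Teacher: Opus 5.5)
Your proposal is correct and follows the paper's proof. The paper likewise computes $[d,\tilde F] = -i'^{-1}[d,F]i^{-1}J_\tau$, expands $[d,F]$ using the $\tau$-map condition, and applies naturality $J_\tau f = \tau(f)J_\tau$ to obtain $I_\tau f - fI_\tau$; it then dismisses the second statement as ``similar'', and your explicit treatment of that case supplies the details the paper leaves implicit, including the observation that naturality of $J_\tau$ holds componentwise for arbitrary morphisms such as $h$.
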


\begin{proof}
    Compute 
    \[
        [d, \tilde{F}] = -i'^{-1} [d, F] i^{-1} J_\tau = -(f i^{-1} - i'^{-1}\tau(f)) J_\tau = -f I_\tau + I_\tau f
    \]
    and similarly for the second statement. 
\end{proof}

The following diagram describes the homotopy $\tilde{F}$ given in \Cref{lem:tau-induced-maps}, where $\bar{F} = -i'^{-1} F i^{-1}$.
\[
\begin{tikzcd}[row sep = 3em]
    \Omega \arrow[r, "f"] \arrow[d, "J_\tau"'] \arrow[dd, "I_\tau"', bend right=45] & \Omega' \arrow[d, "J_\tau"] \arrow[ld, equal] \arrow[dd, "I_\tau", bend left=45] \\
    \tau \Omega \arrow[r, "\tau f"] \arrow[d, "i^{-1}"'] & \tau \Omega' \arrow[d, "i'^{-1}"] \arrow[ld, "\bar{F}"', Rightarrow] \\
    \Omega \arrow[r, "f"'] & \Omega' 
\end{tikzcd}
\]

\begin{proposition}
\label{prop:tau-equiv-to-cone}
    There is a functor
    \[
        \cal{Q}\colon \Kob^\tau(\emptyset) \to h\Kob(\emptyset)
    \]
    into the homotopy category of $\Kob(\emptyset)$, defined for each $\tau$-complex $\Omega^\tau = (\Omega, i)$ by
    \[
        \cal{Q}(\Omega^\tau) := \Cone(\Omega \xrightarrow{\ I - I_\tau \ } \Omega)
    \]
    and for each $\tau$-map $f^\tau = (f, F)$ by the homotopy class of
    \[
        \cal{Q}(f^\tau) := \begin{pmatrix}
            f & \\
            - \tilde{F} & f
        \end{pmatrix}
    \]
    where $\tilde{F}$ is the homotopy given in \Cref{lem:tau-induced-maps}. Furthermore, $\cal{Q}$ maps $\tau$-equivariantly homotopic $\tau$-maps to homotopic chain maps, and hence induces a functor
    \[
        \cal{Q}\colon h\Kob^\tau(\emptyset) \to h\Kob(\emptyset).
    \]
\end{proposition}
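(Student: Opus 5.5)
The plan is to verify four things directly: that $\cal{Q}(f^\tau)$ is a chain map, that $\cal{Q}$ respects identities and composition up to homotopy, and that it sends $\tau$-equivariantly homotopic maps to homotopic chain maps. The main tool is \Cref{lem:tau-induced-maps}, which converts the $\tau$-data $(F, H)$ into homotopies $\tilde F, \tilde H$ relating $I_\tau$ with $f$ and $h$. We work over $\FF_2$, so signs are kept only for bookkeeping.

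\textbf{Chain map.} Write the cone differential as $D = \begin{pmatrix} d & \\ I - I_\tau & d\end{pmatrix}$, with the convention that cone signs are irrelevant over $\FF_2$ (or inserted as $-d$ in the first slot). The condition $D\,\cal{Q}(f^\tau) = \cal{Q}(f^\tau)\,D$ reduces to the off-diagonal entry:
\[
(I - I_\tau) f - f (I - I_\tau) = [d, \tilde F],
\]
that is, $f I_\tau - I_\tau f = [d,\tilde F]$. This is exactly \Cref{lem:tau-induced-maps}, up to the sign convention fixed there; the sign in front of $\tilde F$ in the matrix is chosen to match.

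\textbf{Identities and composition.} The identity $(\id, 0)$ gives $\tilde F = 0$, hence the identity matrix. For a composite, $(g,G)\circ(f,F) = (gf, Gf + \tau(g)F)$, and we compute
\[
\widetilde{Gf + \tau(g)F} = -i''^{-1}(Gf + \tau(g)F)i^{-1}J_\tau .
\]
Using $\tau(g) = J_\tau g J_\tau$, the naturality of $J_\tau$, and $J_\tau f = \tau(f) J_\tau$ with $i^{-1}J_\tau = I_\tau$, the second term becomes $g\tilde F$. For the first term, $-i''^{-1} G f i^{-1}J_\tau$, we rewrite $f i^{-1} J_\tau = f I_\tau$. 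This equals $I_\tau f$ only up to $[d,\tilde F]$, so the term is $\tilde G I_\tau' f$-type: precisely, it is $\tilde G f$ plus a correction of the form $-i''^{-1}G\, i'^{-1}J_\tau\,[d,\tilde F]$-ish.

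I therefore expect strict equality to fail, with
\[
\widetilde{(g,G)(f,F)} = \tilde G f + g\tilde F + (\text{error}).
\]
The error is a degree $-1$ map that is itself $[d,-]$ of a degree $-2$ map built from $G$ and $\tilde F$. With that in hand, $\cal{Q}(g^\tau)\cal{Q}(f^\tau) = \begin{pmatrix} gf & \\ -(\tilde G f + g\tilde F) & gf\end{pmatrix}$ differs from $\cal{Q}(g^\tau f^\tau)$ only in the lower-left corner by a null-homotopic term. A lower-triangular homotopy $\begin{pmatrix} 0 & \\ K & 0\end{pmatrix}$ then realizes the homotopy, provided the difference in that corner is $[d,K]$. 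This bookkeeping of where $J_\tau$ and $i^{-1}$ sit is the step I expect to be fiddly. Working carefully with $\bar F = -i'^{-1}Fi^{-1}$ as in the displayed diagram, I would first try to show the error vanishes identically by moving $J_\tau$ to the left via naturality: $\tilde G f = -i''^{-1}G i'^{-1}J_\tau f = -i''^{-1}G i'^{-1}\tau(f)J_\tau$. I would then use the strict relation $i'^{-1}\tau(f) = f i^{-1} + i'^{-1}[d,F]i^{-1}$, which comes from the definition of a $\tau$-map, so that only a $[d,-]$ term remains.

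\textbf{Homotopy invariance.} Given $(h,H)$ from $(f,F)$ to $(g,G)$, set
\[
\cal{K} = \begin{pmatrix} h & \\ \pm\tilde H & h\end{pmatrix}.
\]
Then $[D,\cal{K}]$ has diagonal entries $[d,h] = f-g$. Its off-diagonal entry is
\[
[d,\tilde H] + (I - I_\tau)h - h(I - I_\tau) = [d,\tilde H] - (I_\tau h - h I_\tau),
\]
which by the second half of \Cref{lem:tau-induced-maps} equals $\pm(\tilde F - \tilde G)$. This matches $\cal{Q}(f^\tau)-\cal{Q}(g^\tau)$ after the signs are fixed consistently, and so $\cal{Q}$ descends to $h\Kob^\tau(\emptyset)$.
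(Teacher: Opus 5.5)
Your overall structure matches the paper's: the chain-map property comes from \Cref{lem:tau-induced-maps}, compositions are handled by a lower-triangular homotopy, and homotopy invariance by $\begin{pmatrix} h & \\ \tilde H & \pm h\end{pmatrix}$. The chain-map, identity and homotopy-invariance parts are fine over $\FF_2$. The gap is in the composition step, which is the only real computation; the bookkeeping you describe would not close.

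First, the claim that the second term $-i''^{-1}\tau(g)F\,i^{-1}J_\tau$ of $\tilde K$ (with $K = Gf + \tau(g)F$) is exactly $g\tilde F$ is false. Naturality of $J_\tau$ only gives $i''^{-1}\tau(g) = I''_\tau\, g\, J_\tau$, whereas $g\, i'^{-1} = g\, I'_\tau J_\tau$ (with $I'_\tau, I''_\tau$ the involutions of $\Omega', \Omega''$). The equality $I''_\tau g = g I'_\tau$ holds only up to homotopy. Precisely, $g\,i'^{-1} - i''^{-1}\tau(g) = i''^{-1}[d,G]\,i'^{-1}$, so
\[
-i''^{-1}\tau(g)F\,i^{-1}J_\tau = g\tilde F + i''^{-1}[d,G]\,i'^{-1}F\,i^{-1}J_\tau .
\]

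Second, consider the residual you isolate from the first term, $-i''^{-1}G\,i'^{-1}[d,F]\,i^{-1}J_\tau$, which is what your relation for $i'^{-1}\tau(f)$ produces. It is not of the form $[d,-]$ on its own, and the error does not vanish identically. Because $G$ does not commute with $d$, one has $G\,i'^{-1}[d,F] = [d,G]\,i'^{-1}F - [d, G\,i'^{-1}F]$, and $[d,G]\,i'^{-1}F$ is not a boundary in general.

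The missing idea is that the two defects must be combined:
\[
\Delta := \tilde K - (\tilde G f + g\tilde F) = i''^{-1}\bigl([d,G]\,i'^{-1}F - G\,i'^{-1}[d,F]\bigr)\,i^{-1}J_\tau .
\]
On expanding, the two terms $G\,i'^{-1}dF$ cancel, using that $i'^{-1}$ is a chain map. This leaves $\Delta = [d,\, i''^{-1}G\,i'^{-1}F\,i^{-1}J_\tau]$, which is the lower-left entry of the homotopy between $\mathcal{Q}(g^\tau f^\tau)$ and $\mathcal{Q}(g^\tau)\mathcal{Q}(f^\tau)$, as in the paper. Since this map equals $-i''^{-1}G\tilde F$, your guess about its shape was right. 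However, it only appears once the $g$-side defect is kept alongside the $f$-side one.
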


\begin{proof}
    The complex $\cal{Q}(\Omega^\tau)$ is given by the underlying object $\Omega \oplus \Omega[1]$ equipped with the differential
    \[
        D = \begin{pmatrix}
            d & \\
            I - I_\tau & -d
        \end{pmatrix}.
    \]
    That $\cal{Q}(f^\tau)$ commutes with $D$ is immediate from \Cref{lem:tau-induced-maps}. Also $\cal{Q}$ preserves the identity morphisms, since $F = 0$ implies $\tilde{F} = 0$.

    Next, we prove that $\cal{Q}$ preserves compositions up to homotopy. Take $\tau$-maps $f^\tau = (f, F)\colon (\Omega, i) \to (\Omega', i')$ and $g^\tau = (g, G)\colon (\Omega', i') \to (\Omega'', i'')$, and put $K := Gf + \tau(g)F$, so that $g^\tau f^\tau = (gf, K)$. Both $\cal{Q}(g^\tau f^\tau)$ and $\cal{Q}(g^\tau)\cal{Q}(f^\tau)$ are lower triangular with diagonal entries $gf$, so their difference is concentrated in the $(2,1)$-entry,
    \[
        \cal{Q}(g^\tau f^\tau) - \cal{Q}(g^\tau)\cal{Q}(f^\tau)
        = \begin{pmatrix}
            0 & \\
            -\Delta & 0
        \end{pmatrix},
        \quad
        \Delta := \tilde{K} - (\tilde{G}f + g\tilde{F}).
    \]
    Compute
    \begin{align*}
        \Delta
        &= -i''^{-1}(Gf + \tau(g)F)i^{-1}J_\tau + i''^{-1}Gi'^{-1} J_\tau f + gi'^{-1}Fi^{-1}J_\tau \\ 
        &= \left(\,-i''^{-1}G\,(fi^{-1} - i'^{-1}\tau(f)) + (gi'^{-1} - i''^{-1}\tau(g))\,Fi^{-1} \,\right) J_\tau \\
        &= \left(\,-i''^{-1}G\,i'^{-1}[d, F]i^{-1} + i''^{-1}[d, G]i'^{-1}\,Fi^{-1} \,\right) J_\tau.
    \end{align*}
    With
    \begin{align*}
        i''^{-1}G\,i'^{-1}[d, F]i^{-1} &= i''^{-1}(\, G\,i'^{-1}dF + G\,i'^{-1}Fd \,)\,i^{-1}, \\
        i''^{-1}[d, G]i'^{-1}\,Fi^{-1} &= i''^{-1}(\, dG\,i'^{-1}F + G\,i'^{-1}dF \,)\,i^{-1},
    \end{align*}
    the two terms involving $dF$ cancel, giving
    \begin{align*}
        \Delta
        &= i''^{-1}(\, dG\, i'^{-1}F - G\, i'^{-1}F d \,)\, i^{-1} J_\tau \\
        &= [d, i''^{-1} G\, i'^{-1} F\, i^{-1} J_\tau].
    \end{align*}
    Therefore, the homotopy
    \[
        \Theta := \begin{pmatrix}
            0 & \\
            i''^{-1} G\, i'^{-1} F\, i^{-1} J_\tau & 0
        \end{pmatrix},
    \]
    gives
    \[
        \cal{Q}(g^\tau f^\tau) - \cal{Q}(g^\tau)\cal{Q}(f^\tau) = [D, \Theta].
    \]

    Finally, let $(h, H)$ be a $\tau$-equivariant homotopy between $\tau$-maps $f^\tau = (f, F)$ and $g^\tau = (g, G)$, and let $\tilde{H}$ be the induced homotopy given in \Cref{lem:tau-induced-maps}. Define
    \[
        \cal{Q}(h^\tau) := \begin{pmatrix}
            h & \\
            \tilde{H} & -h
        \end{pmatrix}.
    \]
    The diagonal entries of $[D, \cal{Q}(h^\tau)]$ are $[d, h] = f - g$, and the $(2,1)$-entry is
    \begin{align*}
        (I - I_\tau)h - h(I - I_\tau) - (d\tilde{H} - \tilde{H}d)
        &= -(I_\tau h - h I_\tau + [d, \tilde{H}]) \\
        &= -(\tilde{F} - \tilde{G}),
    \end{align*}
    where the last equality is the second statement of \Cref{lem:tau-induced-maps}. Hence
    \[
        \cal{Q}(f^\tau) - \cal{Q}(g^\tau) = [D, \cal{Q}(h^\tau)].
    \]
\end{proof}

\begin{definition}
\label{def:inv-kh-complex}
    For an involutive link diagram $L$, we put
    \[
        \cal{Q}(L) := \cal{Q}([L]^\tau)
    \]
    and call it the \textit{formal involutive Khovanov complex} of $L$.
\end{definition}

\begin{proposition}
\label{prop:inv-kh-invariance}
    Let $L, L'$ be involutive link diagrams that are related by one of the involutive Reidemeister moves or the $I$-move. Then there is a homotopy equivalence
    \[
        \cal{Q}(L) \to \cal{Q}(L').
    \]
\end{proposition}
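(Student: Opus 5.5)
The plan is to combine \Cref{thm:inv-tang-invariance} with the functoriality of $\cal{Q}$ established in \Cref{prop:tau-equiv-to-cone}. The key observation is that any functor sends isomorphisms to isomorphisms. A $\tau$-equivariant homotopy equivalence is exactly an isomorphism in $h\Kob^\tau(\emptyset)$, and an isomorphism in $h\Kob(\emptyset)$ is exactly a homotopy equivalence.

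First, I specialize to $B = \emptyset$. If $L$ and $L'$ are related by an involutive Reidemeister move, \Cref{prop:ir-invariance} provides $\tau$-maps $f^\tau = (f, F)$ and $g^\tau = (g, G)$, together with $\tau$-equivariant homotopies $h^\tau$ and ${h'}^\tau$. These exhibit $g^\tau f^\tau \htpy \id_{[L]^\tau}$ and $f^\tau g^\tau \htpy \id_{[L']^\tau}$. If $L$ and $L'$ are related by the $I$-move, \Cref{prop:i-move-invariance} provides the strictly $\tau$-equivariant isomorphism $(\sigma\varphi, 0)$. Its inverse $((\sigma\varphi)^{-1}, 0)$ is also strictly $\tau$-equivariant: applying $\tau$ to $i'\sigma\varphi = \tau(\sigma\varphi) i$ and using $\tau(i) = i^{-1}$ gives the required relation. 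In this case no homotopies are needed at all.

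Next, I apply $\cal{Q}$. By \Cref{prop:tau-equiv-to-cone}, $\cal{Q}$ preserves identities, preserves composition up to homotopy (via the explicit $\Theta$), and sends $\tau$-equivariantly homotopic maps to homotopic maps (via $\cal{Q}(h^\tau)$). Therefore
\[
    \cal{Q}(g^\tau)\cal{Q}(f^\tau) \htpy \cal{Q}(g^\tau f^\tau) \htpy \cal{Q}(\id) = \id,
\]
and symmetrically $\cal{Q}(f^\tau)\cal{Q}(g^\tau) \htpy \id$. Hence $\cal{Q}(f^\tau)\colon \cal{Q}(L) \to \cal{Q}(L')$ is a homotopy equivalence. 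Concretely, it is the map $\begin{pmatrix} f & \\ -\tilde{F} & f \end{pmatrix}$.

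I expect no real obstacle, since the substantive work was done in \Cref{prop:ir-invariance} and \Cref{prop:tau-equiv-to-cone}. The only point needing care is that \Cref{def:tau-equiv-homotopy} is stated for homotopies between $\tau$-maps, so I should check that a composition such as $g^\tau f^\tau$ really is $\tau$-equivariantly homotopic to $(\id, 0)$. This is exactly the content of the pair $(h, H)$ built in \Cref{prop:ir-invariance}. The identity $F - G = [d, H] + i'h - \tau(h)i$ holds there with the canonical isomorphisms suppressed. Translating this into the convention with $i$ written explicitly is a bookkeeping check that I would carry out once. I would also note that in characteristic $2$ all signs are harmless.
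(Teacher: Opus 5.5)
Your proposal is correct and is the paper's argument: take the $\tau$-equivariant homotopy equivalence $[L]^\tau \to [L']^\tau$ from \Cref{prop:ir-invariance} or \Cref{prop:i-move-invariance}, and push it through the functor $\cal{Q}\colon h\Kob^\tau(\emptyset) \to h\Kob(\emptyset)$ of \Cref{prop:tau-equiv-to-cone}, which sends isomorphisms to isomorphisms. Your additional checks spell out details the paper leaves implicit: that the inverse of $(\sigma\varphi, 0)$ is strictly $\tau$-equivariant, and that $(h, H)$ exhibits $g^\tau f^\tau \htpy \id$.
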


\begin{proof}
    By \Cref{prop:ir-invariance,prop:i-move-invariance}, there is a $\tau$-equivariant homotopy equivalence $[L]^\tau \to [L']^\tau$. Applying the functor $\cal{Q}$ of \Cref{prop:tau-equiv-to-cone} gives the desired homotopy equivalence.
\end{proof}

This completes the proof of \Cref{thm:inv-kh-invariance}.

\subsection{$\tau$-matched complexes}
\label{subsec:tau-matched-cpx}

Here, we introduce a more tractable class of $\tau$-complexes, namely the \textit{$\tau$-matched complexes}, and relate it with the involutive Khovanov complex of \cite[Definition 2.2]{Sano:2025b}.

\begin{definition}
\label{def:tau-matching}
    Let $\Omega$ be a complex in $\Kob(B)$, and let $S(\Omega)$ denote the set of summands of $\Omega$, each of whose elements is an object of $\Cob(B)$. A \textit{$\tau$-matching} on $\Omega$ is a self-bijection $j$ of $S(\Omega)$ satisfying the following conditions: (i) for each $X \in S(\Omega)$, $jX = \tau X$ as objects in $\Cob(B)$, (ii) for each component $d_{X, Y}\colon X \to Y$ of the differential $d$ of $\Omega$, the corresponding component $d_{jX, jY}\colon jX \to jY$ is given by $\tau(d_{X, Y})$, and (iii) $j^2 = \id_{S(\Omega)}$. A complex $\Omega$ equipped with a $\tau$-matching $j$ is called a \textit{$\tau$-matched complex}. 
\end{definition}

Note that the existence of a $\tau$-matching requires $B = \tau B$ and $S(\Omega) = \tau S(\Omega)$. 

\begin{proposition}
\label{prop:tau-matched-cpx}
    A $\tau$-matching $j$ on $\Omega$ induces an isomorphism $i\colon \Omega \to \tau \Omega$ such that $(\Omega, i)$ is a $\tau$-complex. 
\end{proposition}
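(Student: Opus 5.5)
The plan is to define $i$ summand-by-summand and then check the two required properties directly. Write $\Omega = \bigoplus_{X \in S(\Omega)} X$, with each summand sitting in some homological degree. Then $\tau\Omega$ is the complex whose summands are $\tau X$ for $X \in S(\Omega)$, with differential components $\tau(d_{X,Y})\colon \tau X \to \tau Y$.

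By condition (i), for each $X$ the object $jX$ is literally the object $\tau X$ of $\Cob(B)$. We therefore define $i\colon \Omega \to \tau\Omega$ as the matrix whose only nonzero entries are identity cobordisms
\[
    X = \tau(jX) \xrightarrow{\ \id\ } \tau(jX),
\]
that is, $i$ sends the summand $X$ of $\Omega$ identically onto the summand $\tau(jX)$ of $\tau\Omega$. Equivalently, $i$ is the permutation matrix of $j$, composed with the equalities $\tau(jX) = \tau\tau X = X$. This uses $\tau^2 = \id$, since $\tau$ is a $180$-degree rotation.

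We must check that $jX$ lies in the same homological degree as $X$. I would argue this from (ii) and the grading conventions. If the paper's intended convention is that $j$ is degree-preserving, this is implicit. Otherwise I would note that $\Omega$ is a graded object and that $j$ should be taken to respect degrees. This is the one point where the definition may need to be read as including that requirement.

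\textbf{Chain map.} Next we verify that $i$ is a chain map, i.e.\ that $(\tau d)\, i = i\, d$ componentwise. Take the component from summand $X$ to summand $Y$ of $\Omega$. On the left, $X$ is sent to $\tau(jX)$ and then to $\tau(jY)$ via $\tau(d_{jX,jY})$. By (ii), $d_{jX,jY} = \tau(d_{X,Y})$, so this equals $\tau\tau(d_{X,Y}) = d_{X,Y}$. On the right we get $d_{X,Y}$ followed by the identity into $\tau(jY)$. These agree.

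\textbf{Isomorphism and $\tau(i) = i^{-1}$.} Since $j$ is a bijection, $i$ is a permutation matrix of identities and hence an isomorphism. Applying $\tau$, the map $\tau(i)\colon \tau\Omega \to \tau\tau\Omega = \Omega$ sends the summand $\tau X$ identically to $\tau\tau(jX) = jX$. Composing, $\tau(i)\, i$ sends $X$ to $\tau(jX)$ and then to $j(jX) = X$ by the identity. By (iii) this is the identity matrix, so $\tau(i) = i^{-1}$. Here one must be careful that the summand of $\tau\Omega$ labelled $\tau(jX)$ is indexed by $jX$, so applying $\tau(i)$ to it lands on the summand indexed by $j(jX)$.

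\textbf{Main obstacle.} The main obstacle is this bookkeeping of indices versus objects, together with the degree issue above. The algebra itself is immediate, and no signs arise over $\FF_2$ since all entries are identities.
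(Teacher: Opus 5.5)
Your proposal is correct and follows the paper's proof essentially verbatim. You define $i$ summandwise as the identity $X \to \tau(jX)$ using (i) and $\tau^2=\id$, obtain the chain-map property from (ii), and obtain $\tau(i)\circ i=\id_\Omega$ from $j^2=\id$; the degree preservation of $j$ that you flag is indeed not forced by (i)--(iii) for summands with no nonzero differential components, and the paper's proof tacitly reads it into the definition, as you propose.
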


\begin{proof}
    For each $X \in S(\Omega)$, consider the summand $jX \in S(\Omega)$. From condition (i) of \Cref{def:tau-matching}, the image $\tau(jX)$ regarded as a summand in $\tau \Omega$ is identical to $X$ as objects in $\Cob(B)$. Define $i$ on $X$ to be the identity morphism $i_X\colon X \to \tau(jX)$. It follows from condition (ii) that $i$ is indeed a chain isomorphism. Furthermore, from condition (iii), the map $i$ on $jX \in S(\Omega)$ is given by the identity morphism $i_{jX}\colon jX \to \tau(j^2(X)) = \tau X$. Applying $\tau$ gives the identity morphism $\tau(i_{jX})\colon \tau(jX) \to \tau^2 X = X$, and hence $\tau(i_{jX}) \circ i_X = \id_X$, proving $\tau(i) \circ i = \id_\Omega$. 
\end{proof}

\begin{proposition}
\label{prop:tau-match-for-inv-tangle}
    For an involutive tangle diagram $T$, the complex $[T]$ admits a $\tau$-matching $j$ such that its induced isomorphism $i\colon[T] \to \tau[T]$ coincides with the one given in \Cref{prop:inv-tng-assoc-isom}.
\end{proposition}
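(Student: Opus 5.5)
The plan is to define $j$ directly from the crossing bijection used in the proof of \Cref{prop:inv-tng-assoc-isom}, and then check the three conditions of \Cref{def:tau-matching} together with the compatibility of the induced isomorphism with the canonical $i$.

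\emph{Definition of $j$.} The summands of $[T]$ are indexed by states $v\colon c(T) \to \{0,1\}$, and the summand $T_v$ is the corresponding resolution, shifted appropriately. Since $\tau T = T$ as a diagram, $\tau$ permutes the crossings of $T$ by a map $\pi\colon c(T) \to c(T)$, and $\pi^2 = \id$. Define
\[
    j(T_v) := T_{v \circ \pi}.
\]
Resolving crossing $x$ according to $v(x)$ and then rotating yields the resolution of $T$ at $\pi(x)$ of the same type: the $0$/$1$-smoothing is determined by the crossing picture, which $\tau$ preserves. Hence $\tau(T_v) = T_{v\circ\pi}$ as objects of $\Cob(B)$, and the homological and $q$-shifts agree because $|v| = |v\circ\pi|$. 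This gives condition (i). Condition (iii) holds because $\pi^2 = \id$.

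\emph{Condition (ii).} An edge $v \to w$ changes one crossing $x$, and the component $d_{v,w}$ is $\pm$ the saddle at $x$. Its image under $\tau$ is the saddle at $\pi(x)$ from $T_{v\circ\pi}$ to $T_{w\circ\pi}$, which is the component $d_{jv,jw}$ up to sign. Over $\FF_2$ the signs are irrelevant, and this is the working convention here. For $\ZZ$-coefficients one would have to incorporate edge signs into the matching, for instance via the sign-adjusted identification of \cite{LS:2014a}. I would note this only in a remark.

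\emph{Comparison with the canonical $i$.} The isomorphism built in \Cref{prop:tau-matched-cpx} sends the summand $T_v$ identically onto the summand $\tau(T_{v\circ\pi})$ of $\tau[T]$. This is exactly the reordering map $2^{c(T)} \to 2^{c(\tau T)}$ from \Cref{prop:inv-tng-assoc-isom}, with the identity on each state space. Both maps are chain isomorphisms that restrict to the identity on the initial state $\bar 0$, which $j$ fixes. The uniqueness statement invoked in \Cref{prop:inv-tng-assoc-isom}, that such isomorphisms are unique up to an overall sign, then forces them to coincide; over $\FF_2$ they agree on the nose anyway.

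\emph{Main obstacle.} The only delicate step is condition (i) at the level of objects. One must be careful that ``$\tau X$ equals $jX$'' is an equality of embedded $1$-manifolds in $D^2$ with boundary $B$, not merely an isomorphism. This follows from $\tau T = T$ holding as an actual equality of diagrams, and from the convention that resolutions are taken in small $\tau$-permuted neighbourhoods of the crossings.
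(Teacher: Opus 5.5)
Your proposal is correct and follows the same route as the paper, which defines $j$ via the action of $\tau$ on the states of $T$ and declares that the conditions of \Cref{def:tau-matching} and the agreement with $i$ are immediate from the constructions. Your check of (i)--(iii) through the crossing involution $\pi$, with signs ignored over $\FF_2$, is a more detailed write-up of that same argument.
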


\begin{proof}
    The matching $j$ is given by the action of $\tau$ on the states of $T$. The conditions of \Cref{def:tau-matching} and the coincidence of the induced isomorphism with that of \Cref{prop:inv-tng-assoc-isom} are immediate from the constructions.
\end{proof}

\begin{definition}
    The $\tau$-matching $j$ stated in \Cref{prop:tau-match-for-inv-tangle} is called the \textit{associated $\tau$-matching} for $[T]$. 
\end{definition}

\begin{example}
    In \Cref{fig:tau-equiv-cpx}, the associated $\tau$-matching for $[T]$ is given by 
    \begin{gather*}
        000 \leftrightarrow 000,\quad 100 \leftrightarrow 010, \quad 001 \leftrightarrow 001,\\
        110 \leftrightarrow 110,\quad 101 \leftrightarrow 011, \quad 111 \leftrightarrow 111.
    \end{gather*}
\end{example}

\begin{definition}
\label{def:tau-pres-map}
    Let $(\Omega, j), (\Omega', j')$ be $\tau$-matched complexes. A chain map $f\colon \Omega \to \Omega'$ \textit{preserves} the $\tau$-matchings if for each pair of objects $X \in S(\Omega)$ and $Y \in S(\Omega')$, the components $f_{X, Y}\colon X \to Y$ and $f_{jX, j'Y}\colon jX \to j'Y$ satisfy $\tau(f_{X, Y}) = f_{jX, j'Y}$. \textit{$\tau$-preserving chain homotopies} are defined similarly.
\end{definition}

\begin{proposition}
\label{prop:tau-matched-map}
    A $\tau$-preserving chain map $f\colon \Omega \to \Omega'$ induces a strictly equivariant $\tau$-map $(f, 0) \colon (\Omega, i) \to (\Omega', i')$ between the corresponding $\tau$-complexes. A similar statement holds for $\tau$-preserving chain homotopies.
\end{proposition}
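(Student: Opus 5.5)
The plan is to check directly that the strict equivariance condition $i' f = \tau(f)\, i$ holds componentwise. By \Cref{def:tau-equiv-complex}, this is exactly what makes $(f, 0)$ a $\tau$-map: $i'f - \tau(f)i = [d, 0] = 0$. Recall from the proof of \Cref{prop:tau-matched-cpx} that $i$ is defined summand-wise. For $X \in S(\Omega)$, the component of $i$ out of $X$ is the identity morphism $i_X\colon X \to \tau(jX)$, where $\tau(jX)$ is regarded as the summand of $\tau\Omega$ coming from $jX$. Similarly, $i'_Y\colon Y \to \tau(j'Y)$ for $Y \in S(\Omega')$. So both sides of the equation are matrices indexed by pairs of summands, and the claim reduces to a comparison of individual entries.

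First, I compute the entry of $i' f$ from a summand $X$ of $\Omega$ to a summand $\tau(j'Y)$ of $\tau\Omega'$. Since $i'$ is a permutation matrix of identity morphisms, this entry is $f_{X,Y}\colon X \to Y = \tau(j'Y)$. Next, I compute the same entry of $\tau(f)\, i$. The map $i$ sends $X$ identically onto $\tau(jX)$. The component of $\tau(f)$ from $\tau(jX)$ to $\tau(j'Y)$ is $\tau(f_{jX, j'Y})$, because applying the functor $\tau$ to the matrix of $f$ applies $\tau$ entrywise and relabels summands. So the entry equals $\tau(f_{jX,j'Y})$. The $\tau$-preserving hypothesis of \Cref{def:tau-pres-map} gives $\tau(f_{X,Y}) = f_{jX,j'Y}$. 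Applying $\tau$ and using $\tau^2 = \id$ yields $f_{X,Y} = \tau(f_{jX,j'Y})$, so the two entries agree. This proves $i'f = \tau(f)i$, and hence $(f,0)$ is a strictly equivariant $\tau$-map.

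For homotopies, I run the same entrywise computation. Given $\tau$-preserving chain maps $f, g$ and a $\tau$-preserving homotopy $h$ with $f - g = [d,h]$, the identical argument gives $i'h = \tau(h)i$. The condition of \Cref{def:tau-equiv-homotopy} for $(h, 0)$ between $(f,0)$ and $(g,0)$ reads $0 - 0 = [d,0] + i'h - \tau(h)i$, and this holds. Hence $(h,0)$ is a $\tau$-equivariant homotopy.

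The only real obstacle is bookkeeping: keeping track of how summands of $\tau\Omega$ are indexed. Specifically, the summand $\tau(jX)$ of $\tau\Omega$ is literally the object $X$ by condition (i), but it carries the index $jX$, and one must confirm that $\tau$ applied to a matrix permutes indices this way. I would state this convention explicitly before the entry comparison. No signs arise, since all components of $i, i'$ are identities.
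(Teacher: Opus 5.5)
Your proof is correct and is essentially the paper's argument. The paper simply observes that, for each pair $X \in S(\Omega)$, $Y \in S(\Omega')$, the square with horizontal arrows $f_{X,Y}$ and $\tau(f_{jX,j'Y})$ and vertical arrows $i_X$, $i'_Y$ strictly commutes, which is exactly your entrywise comparison; you additionally make the index bookkeeping and the use of $\tau^2 = \id$ (equivalently $j^2 = \id$) explicit, and you write out the homotopy case that the paper only calls ``similar''.
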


\begin{proof}
    For each pair of objects $X \in S(\Omega)$ and $Y \in S(\Omega')$, the following diagram strictly commutes. 
    \[
    \begin{tikzcd}
        X \arrow[r, "{f_{X,Y}}"] \arrow[d, "i_X"'] & Y \arrow[d, "i'_Y"] \\
        \tau(jX) \arrow[r, "{\tau(f_{jX, j'Y})}"] & \tau(j'Y).
    \end{tikzcd}
    \]
\end{proof}

\begin{definition}
    $\tau$-matched complexes and $\tau$-preserving chain maps form the category of $\tau$-matched complexes, denoted $\Kob^{\tau m}(B)$. 
\end{definition}

Combining \Cref{prop:tau-matched-cpx,prop:tau-match-for-inv-tangle,prop:tau-matched-map}, we obtain the following. 

\begin{proposition}
    Over $\FF_2$, there is a functor 
    \[
        \Kob^{\tau m}(B) \to \Kob^\tau_\bullet(B)
    \]
    such that the following diagram commutes:
    \[
    \begin{tikzcd}
        \Diag^\tau(B) \arrow[rr, "{[\cdot]^\tau}"] \arrow[rd, dashed] & & \Kob^\tau(B). \\
        & \Kob^{\tau m}(B) \arrow[ru] & 
    \end{tikzcd}
    \]
    Here, the dashed arrow indicates that the correspondence is defined only on objects. The functor descends to their homotopy categories.
\end{proposition}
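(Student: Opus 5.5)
The statement splits into three parts: the functor on objects and morphisms, commutativity of the triangle, and descent to homotopy categories. I plan to assemble them from \Cref{prop:tau-matched-cpx,prop:tau-match-for-inv-tangle,prop:tau-matched-map}.

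First I define the functor $\Kob^{\tau m}(B) \to \Kob^\tau(B)$. On objects it sends $(\Omega, j)$ to $(\Omega, i_j)$, where $i_j$ is the isomorphism of \Cref{prop:tau-matched-cpx}; on a component $X$ this is the identity $X \to \tau(jX)$. On morphisms it sends a $\tau$-preserving chain map $f$ to $(f, 0)$, which is a $\tau$-map by \Cref{prop:tau-matched-map}. Functoriality is immediate from the composition law of \Cref{def:tau-equiv-complex}: $(g,0)\circ(f,0) = (gf, 0\cdot f + \tau(g)\cdot 0) = (gf,0)$, and the identity maps to $(\id,0)$. The only point needing care is that $i'f = \tau(f)i$ holds strictly. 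Checked componentwise, this reads $\tau(f_{jX,j'Y}) = f_{X,Y}$ after the identifications $\tau(jX)=X$ and $\tau(j'Y) = Y$, using $\tau^2 = \id$. This is the defining condition of \Cref{def:tau-pres-map} with $\tau$ applied to both sides.

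Next, commutativity on objects. The dashed arrow sends $T$ to $([T], j_T)$, with $j_T$ the associated $\tau$-matching. By \Cref{prop:tau-match-for-inv-tangle}, the induced $i_{j_T}$ equals the isomorphism of \Cref{prop:inv-tng-assoc-isom}, so the composite is $([T], i) = [T]^\tau$. This is where working over $\FF_2$ matters. The isomorphism of \Cref{prop:inv-tng-assoc-isom} is only determined up to a uniform sign by the reordering of crossings, and the sign twists between states disappear modulo $2$. So the componentwise identities really do assemble into a chain isomorphism agreeing with the canonical one. I expect this to be the one step that needs a remark rather than a citation; I would argue it by noting that both maps restrict to $I$ on the initial state $\bar 0$ and invoking the uniqueness statement in the proof of \Cref{prop:inv-tng-assoc-isom}.

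Finally, descent to homotopy categories. For a $\tau$-preserving homotopy $h$ between $\tau$-preserving maps $f,g$, I send $h$ to the pair $(h, 0)$. The equation of \Cref{def:tau-equiv-homotopy} then reads $0 - 0 = [d,0] + i'h - \tau(h)i$. The same componentwise argument as before gives $i'h = \tau(h)i$, so $(h,0)$ is a $\tau$-equivariant homotopy. Hence homotopic morphisms go to homotopic morphisms, homotopy equivalences go to $\tau$-equivariant homotopy equivalences, and the functor factors through $h\Kob^{\tau m}(B) \to h\Kob^\tau(B)$.
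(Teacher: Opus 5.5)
Your proposal is correct and follows the paper's argument, which obtains this proposition simply by combining \Cref{prop:tau-matched-cpx,prop:tau-match-for-inv-tangle,prop:tau-matched-map}. Objects go to the matching-induced $\tau$-complex, morphisms and homotopies go to $(f,0)$ and $(h,0)$, and the triangle commutes on objects via the associated $\tau$-matching of $[T]$. Your componentwise check of $i'f = \tau(f)i$ and your remark on why $\FF_2$ is needed spell out details the paper leaves implicit.
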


Next, we focus on the case $B = \emptyset$. For a $\tau$-matched complex $(\Omega, j)$ in $\Kob^{\tau m}(\emptyset)$ (regarded as a $\tau$-complex), the involution $I_\tau$ on $\Omega$ is defined by
\[
\begin{tikzcd}
    \Omega \arrow[r, "J_\tau"]
    & \tau \Omega \arrow[r, "i^{-1}"]
    & \Omega.
\end{tikzcd}
\]
By construction, its restriction to a summand $X \in S(\Omega)$ is simply
\[
\begin{tikzcd}
    X \arrow[r, "J_\tau"]
    & \tau X \arrow[r, "\id"]
    & jX.
\end{tikzcd}
\]
In particular, when $\Omega = [L]$ for an involutive link diagram $L$, applying the tautological functor $\cal{F}$ to $[L]$ gives the Khovanov complex $\CKh(L)$ (over $\FF_2$), and to $I_\tau$ yields the chain automorphism defined in \cite[Section 2.1]{Sano:2025b} (there, the automorphism is denoted $\tau$). 

\begin{proposition}
\label{prop:recover-ckhi}
    Let $L$ be an involutive link diagram. The involutive Khovanov complex $\CKhI(L)$ of \cite[Definition 2.2]{Sano:2025b} is obtained as the image of $L$ under the following sequence of functors:
    \[
    \begin{tikzcd}
        \Diag^\tau(\emptyset) \arrow[r, "{[\cdot]^\tau}"] & h\Kob^\tau(\emptyset) \arrow[r, "\cal{Q}"] & h\Kob(\emptyset) \arrow[r, "\cal{F}"] & h\Kom({R\GrMod}).
    \end{tikzcd}
    \]
\end{proposition}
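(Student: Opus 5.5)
The plan is to unwind the three functors on the object $L$ and compare the result, term by term, with \cite[Definition 2.2]{Sano:2025b}. There, $\CKhI(L)$ is the mapping cone of $1 + \tau$ (equivalently $1 - \tau$ over $\FF_2$) acting on $\CKh(L)$, where $\tau$ is the chain involution of \cite[Section 2.1]{Sano:2025b}.

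\textbf{Step 1: the cone.} By \Cref{def:inv-kh-complex}, $\cal{Q}([L]^\tau)$ is $\Cone([L] \xrightarrow{I - I_\tau} [L])$. Its underlying object is $[L] \oplus [L][1]$, with differential
\[
D = \begin{pmatrix} d & \\ I - I_\tau & -d \end{pmatrix}.
\]

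\textbf{Step 2: apply $\cal{F}$.} The tautological functor $\cal{F} = \Hom([\emptyset], -)$ is additive. Applied entrywise to a complex in $\Mat(\Cob(\emptyset))$, it commutes with direct sums, shifts and mapping cones. Hence
\[
\cal{F}(\cal{Q}(L)) = \Cone\bigl(\CKh(L) \xrightarrow{\cal{F}(I) - \cal{F}(I_\tau)} \CKh(L)\bigr),
\]
where $\cal{F}([L]) = \CKh(L)$ by the construction recalled in \Cref{subsec:kh-for-tangles}, and $\cal{F}(I) = \id$. Reducing the coefficients $h, t$ appropriately, or keeping them in the $U(2)$-equivariant version, matches the coefficient ring used in \cite{Sano:2025b}.

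\textbf{Step 3: identify $\cal{F}(I_\tau)$.} This is the main point. By \Cref{prop:tau-match-for-inv-tangle} and the discussion preceding the statement, $I_\tau$ restricted to a state summand $X = L_v$ is the composite $J_\tau\colon L_v \to \tau L_v$ followed by the identity onto $jX = L_{\tau v}$, where $\tau v$ is the mirrored state.

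Under $\cal{F}$, the cylinders $J_\tau$ send each circle $C$ to $\tau C$ without dots. So $\cal{F}(J_\tau)$ is the algebra isomorphism $A^{\otimes C(v)} \to A^{\otimes C(\tau v)}$ that permutes tensor factors according to $C \mapsto \tau C$ and sends $1 \mapsto 1$, $X \mapsto X$. This is exactly how the involution of \cite[Section 2.1]{Sano:2025b} is defined on enhanced states: it maps $(v, \text{labels})$ to $(\tau v, \text{labels transported by } \tau)$.

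The obstacle I expect is signs and normalization. The isomorphism $i$ of \Cref{prop:inv-tng-assoc-isom} is normalized to be $I$ on $\bar 0$ and is unique only up to sign. Over $\FF_2$ all signs vanish, so the identification is exact. If needed, I will also check that neck-cutting $J_\tau$ introduces no dot terms; this holds because a single undotted cylinder is the identity under $\cal{F}$.

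\textbf{Step 4: well-definedness up to homotopy.} Finally, since each functor in the sequence is well defined on homotopy categories (\Cref{prop:tau-equiv-to-cone}), the image of $L$ is the homotopy class of $\CKhI(L)$, which proves the claim.
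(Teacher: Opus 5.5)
Your proposal is correct and follows the paper's proof. The paper argues that $\cal{F}$ commutes with the cone, $\cal{F}(\cal{Q}(L)) = \Cone(\cal{F}(I) - \cal{F}(I_\tau))$, and relies on the remark just before the proposition that $\cal{F}(I_\tau)$ is the involution of \cite[Section 2.1]{Sano:2025b}; your Step 3 spells out that identification via the associated $\tau$-matching. One small inaccuracy that does not affect the argument: neck-cutting $J_\tau$ does produce dotted terms, but under $\cal{F}$ they sum to the transport map $1 \mapsto 1$, $X \mapsto X$, which is all you need.
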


\begin{proof}
    The TQFT $\cal{F}$ commutes with the cone: 
    \[
        \cal{F}(\cal{Q}(L)) = \cal{F}(\Cone(I - I_\tau)) = \Cone(\cal{F}(I) - \cal{F}(I_\tau)) = \CKhI(L). 
    \]
\end{proof}

Together with \Cref{thm:inv-tang-invariance}, we recover \cite[Theorem 2, Proposition 2.7]{Sano:2025b}: the chain homotopy type of $\CKhI(L)$ is an invariant of the involutive link. Since $\CKhI(L)$ is a complex in the ordinary category of complexes, its homology is well-defined, and is the \textit{involutive Khovanov homology} $\KhI(L)$ of $L$, defined in \cite[Definition 2.2]{Sano:2025b}. 
    \section{Reducing the involutive Khovanov complex}
\label{sec:reducing}

The idea of Bar-Natan's fast Khovanov homology computation \cite{BN:2007} is to iteratively reduce the Khovanov complex while building it from smaller tangle pieces, using delooping and Gaussian elimination (\Cref{prop:delooping,prop:gauss-elim}). Here, we develop analogous techniques for the involutive Khovanov complex. 

\subsection{Symmetric tangle decomposition}
\label{subsec:sym-tangle-decomp}

\begin{figure}[t]
    \centering
    \tikzset{every picture/.style={line width=0.75pt}} 

\begin{tikzpicture}[x=0.75pt,y=0.75pt,yscale=-1,xscale=1]

\draw  [fill={rgb, 255:red, 241; green, 241; blue, 241 }  ,fill opacity=1 ][dash pattern={on 4.5pt off 4.5pt}] (13.38,76.46) .. controls (13.38,43.4) and (40.18,16.6) .. (73.24,16.6) .. controls (106.31,16.6) and (133.11,43.4) .. (133.11,76.46) .. controls (133.11,109.52) and (106.31,136.33) .. (73.24,136.33) .. controls (40.18,136.33) and (13.38,109.52) .. (13.38,76.46) -- cycle ;
\draw  [fill={rgb, 255:red, 255; green, 255; blue, 255 }  ,fill opacity=1 ][dash pattern={on 4.5pt off 4.5pt}] (84.9,90.06) .. controls (84.9,82.3) and (91.19,76.01) .. (98.95,76.01) .. controls (106.71,76.01) and (113,82.3) .. (113,90.06) .. controls (113,97.82) and (106.71,104.11) .. (98.95,104.11) .. controls (91.19,104.11) and (84.9,97.82) .. (84.9,90.06) -- cycle ;
\draw  [fill={rgb, 255:red, 0; green, 0; blue, 0 }  ,fill opacity=1 ] (96.26,76.01) .. controls (96.26,74.75) and (97.28,73.72) .. (98.55,73.72) .. controls (99.82,73.72) and (100.84,74.75) .. (100.84,76.01) .. controls (100.84,77.28) and (99.82,78.31) .. (98.55,78.31) .. controls (97.28,78.31) and (96.26,77.28) .. (96.26,76.01) -- cycle ;
\draw  [fill={rgb, 255:red, 0; green, 0; blue, 0 }  ,fill opacity=1 ] (84.54,98.11) .. controls (84.54,96.85) and (85.57,95.82) .. (86.84,95.82) .. controls (88.1,95.82) and (89.13,96.85) .. (89.13,98.11) .. controls (89.13,99.38) and (88.1,100.41) .. (86.84,100.41) .. controls (85.57,100.41) and (84.54,99.38) .. (84.54,98.11) -- cycle ;
\draw    (46.15,75.61) .. controls (47.4,65.66) and (57.8,67.25) .. (62.19,58.54) ;
\draw    (56.44,97.71) .. controls (65.86,103.66) and (77.86,104.51) .. (86.84,98.11) ;
\draw  [fill={rgb, 255:red, 255; green, 255; blue, 255 }  ,fill opacity=1 ][dash pattern={on 4.5pt off 4.5pt}] (32.1,89.66) .. controls (32.1,81.9) and (38.39,75.61) .. (46.15,75.61) .. controls (53.91,75.61) and (60.2,81.9) .. (60.2,89.66) .. controls (60.2,97.42) and (53.91,103.71) .. (46.15,103.71) .. controls (38.39,103.71) and (32.1,97.42) .. (32.1,89.66) -- cycle ;
\draw  [fill={rgb, 255:red, 0; green, 0; blue, 0 }  ,fill opacity=1 ] (43.86,75.61) .. controls (43.86,74.35) and (44.88,73.32) .. (46.15,73.32) .. controls (47.42,73.32) and (48.44,74.35) .. (48.44,75.61) .. controls (48.44,76.88) and (47.42,77.91) .. (46.15,77.91) .. controls (44.88,77.91) and (43.86,76.88) .. (43.86,75.61) -- cycle ;
\draw  [fill={rgb, 255:red, 0; green, 0; blue, 0 }  ,fill opacity=1 ] (54.14,97.71) .. controls (54.14,96.45) and (55.17,95.42) .. (56.44,95.42) .. controls (57.7,95.42) and (58.73,96.45) .. (58.73,97.71) .. controls (58.73,98.98) and (57.7,100.01) .. (56.44,100.01) .. controls (55.17,100.01) and (54.14,98.98) .. (54.14,97.71) -- cycle ;
\draw  [fill={rgb, 255:red, 0; green, 0; blue, 0 }  ,fill opacity=1 ] (117.74,41.68) .. controls (117.74,40.41) and (118.76,39.39) .. (120.03,39.39) .. controls (121.3,39.39) and (122.32,40.41) .. (122.32,41.68) .. controls (122.32,42.94) and (121.3,43.97) .. (120.03,43.97) .. controls (118.76,43.97) and (117.74,42.94) .. (117.74,41.68) -- cycle ;
\draw  [fill={rgb, 255:red, 0; green, 0; blue, 0 }  ,fill opacity=1 ] (23.21,42.86) .. controls (23.21,41.6) and (24.24,40.57) .. (25.5,40.57) .. controls (26.77,40.57) and (27.8,41.6) .. (27.8,42.86) .. controls (27.8,44.13) and (26.77,45.16) .. (25.5,45.16) .. controls (24.24,45.16) and (23.21,44.13) .. (23.21,42.86) -- cycle ;
\draw    (25.5,42.86) .. controls (31.8,46.05) and (48.2,46.85) .. (55.31,41.77) ;
\draw    (90.23,42.77) .. controls (96.52,45.96) and (112.92,46.76) .. (120.03,41.68) ;
\draw  [dash pattern={on 0.84pt off 2.51pt}]  (72.8,2.75) -- (72.8,150.05) ;
\draw  [fill={rgb, 255:red, 255; green, 255; blue, 255 }  ,fill opacity=1 ][dash pattern={on 4.5pt off 4.5pt}] (55.31,44.06) .. controls (55.31,34.42) and (63.12,26.6) .. (72.77,26.6) .. controls (82.41,26.6) and (90.23,34.42) .. (90.23,44.06) .. controls (90.23,53.71) and (82.41,61.52) .. (72.77,61.52) .. controls (63.12,61.52) and (55.31,53.71) .. (55.31,44.06) -- cycle ;

\draw  [fill={rgb, 255:red, 0; green, 0; blue, 0 }  ,fill opacity=1 ] (53.01,41.77) .. controls (53.01,40.5) and (54.04,39.48) .. (55.31,39.48) .. controls (56.57,39.48) and (57.6,40.5) .. (57.6,41.77) .. controls (57.6,43.04) and (56.57,44.06) .. (55.31,44.06) .. controls (54.04,44.06) and (53.01,43.04) .. (53.01,41.77) -- cycle ;
\draw  [fill={rgb, 255:red, 0; green, 0; blue, 0 }  ,fill opacity=1 ] (87.93,42.77) .. controls (87.93,41.5) and (88.96,40.48) .. (90.23,40.48) .. controls (91.49,40.48) and (92.52,41.5) .. (92.52,42.77) .. controls (92.52,44.04) and (91.49,45.06) .. (90.23,45.06) .. controls (88.96,45.06) and (87.93,44.04) .. (87.93,42.77) -- cycle ;
\draw  [fill={rgb, 255:red, 0; green, 0; blue, 0 }  ,fill opacity=1 ] (59.9,58.54) .. controls (59.9,57.28) and (60.93,56.25) .. (62.19,56.25) .. controls (63.46,56.25) and (64.49,57.28) .. (64.49,58.54) .. controls (64.49,59.81) and (63.46,60.84) .. (62.19,60.84) .. controls (60.93,60.84) and (59.9,59.81) .. (59.9,58.54) -- cycle ;
\draw  [fill={rgb, 255:red, 0; green, 0; blue, 0 }  ,fill opacity=1 ] (80.1,58.28) .. controls (80.1,57.01) and (81.12,55.99) .. (82.39,55.99) .. controls (83.66,55.99) and (84.68,57.01) .. (84.68,58.28) .. controls (84.68,59.55) and (83.66,60.57) .. (82.39,60.57) .. controls (81.12,60.57) and (80.1,59.55) .. (80.1,58.28) -- cycle ;
\draw    (98.55,76.01) .. controls (95.8,66.86) and (87.38,67.67) .. (82.39,58.28) ;
\draw  [fill={rgb, 255:red, 0; green, 0; blue, 0 }  ,fill opacity=1 ] (28.35,118.29) .. controls (28.35,117.02) and (29.38,116) .. (30.65,116) .. controls (31.91,116) and (32.94,117.02) .. (32.94,118.29) .. controls (32.94,119.55) and (31.91,120.58) .. (30.65,120.58) .. controls (29.38,120.58) and (28.35,119.55) .. (28.35,118.29) -- cycle ;
\draw    (30.65,118.29) .. controls (31.8,110.06) and (34.29,106.3) .. (38.02,102.13) ;
\draw  [fill={rgb, 255:red, 0; green, 0; blue, 0 }  ,fill opacity=1 ] (35.73,102.13) .. controls (35.73,100.86) and (36.75,99.83) .. (38.02,99.83) .. controls (39.29,99.83) and (40.31,100.86) .. (40.31,102.13) .. controls (40.31,103.39) and (39.29,104.42) .. (38.02,104.42) .. controls (36.75,104.42) and (35.73,103.39) .. (35.73,102.13) -- cycle ;
\draw  [fill={rgb, 255:red, 0; green, 0; blue, 0 }  ,fill opacity=1 ] (112.93,118.29) .. controls (112.93,117.02) and (113.96,116) .. (115.22,116) .. controls (116.49,116) and (117.52,117.02) .. (117.52,118.29) .. controls (117.52,119.55) and (116.49,120.58) .. (115.22,120.58) .. controls (113.96,120.58) and (112.93,119.55) .. (112.93,118.29) -- cycle ;
\draw  [fill={rgb, 255:red, 0; green, 0; blue, 0 }  ,fill opacity=1 ] (104.73,102.13) .. controls (104.73,100.86) and (105.75,99.83) .. (107.02,99.83) .. controls (108.29,99.83) and (109.31,100.86) .. (109.31,102.13) .. controls (109.31,103.39) and (108.29,104.42) .. (107.02,104.42) .. controls (105.75,104.42) and (104.73,103.39) .. (104.73,102.13) -- cycle ;
\draw    (115.22,118.29) .. controls (115,112.06) and (111,105.26) .. (107.02,102.13) ;

\draw (72.77,44.06) node  [font=\scriptsize]  {$D$};
\draw (98.95,90.06) node  [font=\scriptsize]  {$\tau D'$};
\draw (46.15,89.66) node  [font=\scriptsize]  {$D'$};

\end{tikzpicture}
    \caption{A symmetric planar arc diagram.}
    \label{fig:planar-diagram}
\end{figure}

\begin{definition}    
    A \textit{$d$-input planar arc diagram} $D$ is a disk with $d$ smaller disks $D_1, \ldots, D_d$ removed, together with a collection of disjoint properly embedded oriented arcs and circles that have boundary in $\del D$. The outer disk is called the \textit{output disk}, and the inner disks are called the \textit{input disks}. A planar arc diagram $D$ is \textit{symmetric} if $\tau D = D$ up to reordering of the input disks (see \Cref{fig:planar-diagram}). 
\end{definition}

The input disks of a symmetric planar arc diagram can be separated into two types: those that intersect the $y$-axis, called the \textit{on-axis input disks}, and those that do not, called the \textit{off-axis input disks}. Each on-axis input disk $D_i$ satisfies $\tau D_i = D_i$, and each off-axis input disk $D'_j$ has a unique counterpart $\tau D'_j$. Let $d_s, d_a$ denote the numbers of on-axis and off-axis input disks. We usually order the $d = d_s + 2d_a$ input disks of $D$ as 
\[
    (\underbrace{D_1, \ldots, D_{d_s}}_{\text{center}}; \,\underbrace{D'_1, \ldots, D'_{d_a}}_{\text{left}}; \,\underbrace{\tau D'_1, \ldots, \tau D'_{d_a}}_{\text{right}})
\]
so that $D_1, \ldots, D_{d_s}$ are on-axis and the remaining ones are off-axis, with $D'_1, \ldots, D'_{d_a}$ on the left side of $D$ and $\tau D'_1, \ldots, \tau D'_{d_a}$ on the right. The input disks of $\tau D$ are then ordered as
\[
    (\underbrace{D_1, \ldots, D_{d_s}}_{\text{center}}; \,\underbrace{\tau D'_1, \ldots, \tau D'_{d_a}}_{\text{right}}; \,\underbrace{D'_1, \ldots, D'_{d_a}}_{\text{left}}).
\]

A \textit{basic symmetric planar arc diagram} $D$ is a $3$-input symmetric planar arc diagram with $d_s = d_a = 1$ (as in \Cref{fig:planar-diagram}). Observe that any symmetric planar arc diagram can be expressed as a horizontal composition of a basic symmetric planar arc diagram and three smaller planar arc diagrams, by grouping the input disks into three: the on-axis disks, the off-axis disks on the left, and their counterparts on the right (allowing each group to be empty).

Suppose $D$ is a basic symmetric planar arc diagram with input disks $(D_s, D_a, \tau D_a)$. Given an involutive tangle diagram $T_s$ that matches $D_s$ (i.e. $\partial T_s$ coincides with the boundary points on $\partial D_s$) and an arbitrary tangle diagram $T_a$ that matches $D_a$, the composition
\[
    T = D(T_s, T_a, \tau T_a)
\]
is an involutive tangle diagram. Such a description of an involutive tangle diagram $T$ is called a \textit{basic symmetric tangle decomposition}. Applying $\tau$ gives a symmetric tangle decomposition for $\tau T$, 
\[
    \tau T = (\tau D)(\tau T_s, \tau T_a, T_a). 
\]
Our aim is to reduce the $\tau$-complex $[T]^\tau$ by reducing the central $\tau$-complex $[T_s]^\tau$ equivariantly, reducing the complex $[T_a]$ non-equivariantly, and recombining them using $D$. We shall develop such techniques for general $\tau$-complexes.

Suppose we are given a $\tau$-complex $(\Omega_s, i_s)$ that matches $D_s$ and an arbitrary complex $\Omega_a$ that matches $D_a$. Consider the composite complex
\[
    \Omega = D(\Omega_s, \Omega_a, \tau \Omega_a).
\]
Define
\[
    i: \Omega \xrightarrow{\ \isom\ } \tau \Omega
\]
by the composition
\[
    \begin{tikzcd}[column sep=3.25em]
    \Omega \arrow[d, equal] 
        \arrow[rr, "i"]
    & & \tau\Omega 
        \arrow[d, equal]\\
    {D(\Omega_s, \Omega_a, \tau \Omega_a)} 
        \arrow[r, "{D(i_s, I, I)}"]
    & {D(\tau \Omega_s, \Omega_a, \tau \Omega_a) }
        \arrow[r, "R"]
    & {(\tau D)(\tau \Omega_s, \tau \Omega_a, \Omega_a).}
    \end{tikzcd}
\]
Here, the second chain isomorphism $R$ is given by the permutation matrices that realize the reordering of the direct summands of the off-axis complexes. The following two lemmas are obvious from the construction.

\begin{lemma}
\label{lem:tau-complex-from-sym-decomp}
    $(\Omega, i) = (D(\Omega_s, \Omega_a, \tau \Omega_a), R \circ D(i_s, I, I))$ is a $\tau$-complex.
\end{lemma}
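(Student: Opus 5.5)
To prove \Cref{lem:tau-complex-from-sym-decomp}, I will check the two defining conditions of \Cref{def:tau-equiv-complex}: that $i = R \circ D(i_s, I, I)$ is an isomorphism $\Omega \to \tau\Omega$ in $\Kob(\tau B) = \Kob(B)$, and that $\tau(i) = i^{-1}$. The first is easy. Planar arc diagrams act functorially on complexes, so $D(i_s, I, I)$ is a chain isomorphism with inverse $D(i_s^{-1}, I, I)$. The map $R$ is a chain isomorphism because the planar algebra composition $(\tau D)(\tau\Omega_s, \tau\Omega_a, \Omega_a)$ and $\tau(D(\Omega_s,\Omega_a,\tau\Omega_a)) = D(\tau\Omega_s, \Omega_a, \tau\Omega_a)$ after rotation have the same summands, namely pictures $D(X_s, X_a, \tau X'_a)$ rotated. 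They also have the same differential components, only with the tensor factors indexed in a different order. Over $\FF_2$ no Koszul signs arise from this reordering, so $R$ is just a permutation of summands commuting with the differentials.

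The main step is the identity $\tau(i)\circ i = \id_\Omega$. Applying $\tau$ to the defining composition gives
\[
\tau(i) = \tau(R)\circ \tau(D(i_s,I,I)) = \tau(R) \circ (\tau D)(\tau(i_s), I, I)\colon \tau\Omega \to \Omega,
\]
using that $\tau$ is a functor compatible with planar composition, $\tau(D(f,g,h)) = (\tau D)(\tau f, \tau g, \tau h)$. Next I will use that $R$ is natural in the three inputs. It only permutes the off-axis factors, so it commutes with any map applied in the on-axis slot:
\[
(\tau D)(\tau(i_s), I, I)\circ R = R\circ D(\tau(i_s), I, I).
\]
Here the two $R$'s are the corresponding reordering maps on the appropriate complexes. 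This gives
\[
\tau(i)\circ i = \tau(R)\circ R \circ D(\tau(i_s)\, i_s, I, I) = \tau(R)\circ R,
\]
because $\tau(i_s) = i_s^{-1}$.

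It then remains to show $\tau(R)\circ R = \id$. The map $R$ swaps the left and right off-axis slots, and $\tau(R)$ swaps them back. Since each is a permutation matrix and the composite permutation is the identity on the index set of summands $(X_s, X_a, Y_a)$, the composite is the identity. I expect the only real care needed is bookkeeping: checking that applying $\tau$ to $R$ produces exactly the inverse reordering, and not some other identification. I would handle this by writing $R$ explicitly on summands as $D(X_s, X_a, \tau Y_a) \mapsto (\tau D)(\tau X_s, \tau(\tau Y_a), \tau X_a)$ with the identity cobordism, and noting that $\tau^2 = \id$ and $\tau D = D$ up to reordering of the input disks. Over $\ZZ$ one would additionally need to track signs, but in our $\FF_2$ setting this is unnecessary.
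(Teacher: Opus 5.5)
Your verification is correct. It is essentially the unpacking the paper leaves implicit, since the paper states this lemma is ``obvious from the construction'' and gives no argument: you use functoriality of $D$ and $\tau$, naturality of the reordering $R$ in the on-axis slot, $\tau(i_s)=i_s^{-1}$, and $\tau(R)\circ R=\id$.

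One small slip in your first paragraph: $\tau(D(\Omega_s,\Omega_a,\tau\Omega_a))$ is by definition $(\tau D)(\tau\Omega_s,\tau\Omega_a,\Omega_a)$, not $D(\tau\Omega_s,\Omega_a,\tau\Omega_a)$. The two complexes you mean to compare are the source and target of $R$.
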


\begin{lemma}
    For a symmetric tangle decomposition $T = D(T_s, T_a, \tau T_a)$, the $\tau$-complex $[T]^\tau$ coincides with the complex $D([T_s], [T_a], \tau [T_a])$ equipped with the isomorphism $i$ of \Cref{lem:tau-complex-from-sym-decomp}.
\end{lemma}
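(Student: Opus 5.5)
The plan is to compare the two isomorphisms $[T] \to \tau[T]$ summand by summand. One is the isomorphism $i$ of \Cref{prop:inv-tng-assoc-isom}. The other is $R \circ D(i_s, I, I)$ from \Cref{lem:tau-complex-from-sym-decomp}, built from the associated isomorphism $i_s$ of $[T_s]$.

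First, we identify $[T]$ with $D([T_s],[T_a],\tau[T_a])$ in the usual way of Bar-Natan's planar algebra structure \cite{BN:2005}. The crossings of $T$ are the disjoint union of the crossings of $T_s$, $T_a$ and $\tau T_a$, ordered in that order. A state of $T$ is then a triple $(s_s, s_a, s_a')$ of states of the three pieces. Its resolution is $D$ applied to the three resolutions. The differential is the tensor (horizontal) composition of the three differentials, with the Koszul sign convention. Doing the same for $\tau T = (\tau D)(\tau T_s, \tau T_a, T_a)$ identifies $\tau[T]$ with $(\tau D)(\tau[T_s],\tau[T_a],[T_a])$.

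Next, we compute the crossing bijection $c(T)\to c(\tau T)$ used in the proof of \Cref{prop:inv-tng-assoc-isom}. It restricts to the canonical bijection $c(T_s)\to c(\tau T_s)$. On the off-axis pieces it swaps the $T_a$ block with the $\tau T_a$ block, and this swap is exactly the reordering of summands performed by $R$. Hence, on each state space, both isomorphisms are given by the identity cobordism between the same resolved diagram. This is the $\tau$-matching picture of \Cref{prop:tau-match-for-inv-tangle}: the $\tau$-matching on $[T]$ is the product of the associated $\tau$-matching on $[T_s]$ with the swap of the off-axis factors. So the two maps agree up to a sign on each summand.

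It remains to fix these signs. By the uniqueness argument after \cite[Definition 4.5]{LS:2014a}, any two isomorphisms with these components differ by an overall sign. Both are chain maps, and on the initial state $\bar 0$ both restrict to $I$: $i$ does by its normalization, and $R\circ D(i_s,I,I)$ does because $i_s$ is normalized on $\bar 0$ of $T_s$ and $R$ is a permutation. So the two isomorphisms coincide. Since we work over $\FF_2$, the signs are in fact irrelevant, and the comparison of summands already suffices.

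The only place requiring care is the Koszul signs introduced when $R$ moves the $\tau[T_a]$ factor past the $[T_a]$ factor. If we later upgrade to $\ZZ$ coefficients, $R$ must be taken with the graded-swap sign so that it is a chain map. The uniqueness argument above then absorbs any remaining discrepancy.
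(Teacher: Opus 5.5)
Your proposal is correct and is the verification the paper leaves implicit; the paper gives no proof and only says the lemma is ``obvious from the construction.'' You observe that the positional crossing bijection $c(T)\to c(\tau T)$ is the associated bijection of $T_s$ on the axis and the block swap realized by $R$ off the axis, so both isomorphisms are identity cobordisms between the same resolutions, and over $\FF_2$ nothing more is needed.

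One small caveat concerns your $\ZZ$ aside. Whether the graded-swap sign in $R$ is $+1$ on $\bar 0$ depends on whether the Koszul rule uses shifted or unshifted heights. So there the uniqueness argument by itself only gives agreement up to an overall sign, rather than absorbing every discrepancy as you claim.
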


Hereafter, we always assume that such a composite complex $\Omega$ is equipped with the above identification when regarded as a $\tau$-complex.

The following proposition states that equivariant chain maps between on-axis complexes, and chain maps between off-axis complexes induce equivariant chain maps between the composite $\tau$-complexes. The same holds for chain homotopies. In particular, equivariant homotopy equivalences between on-axis complexes and homotopy equivalences between off-axis complexes both induce equivariant homotopy equivalences on the composite $\tau$-complex. 

\begin{proposition}
\label{prop:sym-decomp-maps}
    Given a basic symmetric planar arc diagram $D$ with input disks $(D_s, D_a, \tau D_a)$, suppose $\Omega_s, \Omega_s'$ are $\tau$-complexes that match $D_s$, and $\Omega_a, \Omega_a'$ are complexes that match $D_a$.
    \begin{enumerate}
        \item A $\tau$-map
        \[
            f_s^\tau = (f_s, F_s)\colon
                \Omega_s \to \Omega_s'
        \]
        induces a $\tau$-map 
        \[
            \bar{f}_s^\tau\colon
                D(\Omega_s, \Omega_a, \tau \Omega_a) \to
                D(\Omega_s', \Omega_a, \tau \Omega_a)
        \]
        where $\bar{f}_s^\tau = (D(f_s, I, I),\ R \circ D(F_s, I, I))$.
        
        \item A $\tau$-homotopy $h_s^\tau = (h_s, H_s)$ between $\tau$-maps
        \[
            f_s^\tau \htpy f_s'^\tau\colon
                \Omega_s \to \Omega_s'
        \]
        induces a $\tau$-homotopy $\bar{h}_s^\tau = (\bar{h}_s, \bar{H}_s)$ between the induced $\tau$-maps
        \[
            \bar{f}_s^\tau \htpy \bar{f}_s'^\tau \colon
                D(\Omega_s, \Omega_a, \tau \Omega_a) \to
                D(\Omega_s', \Omega_a, \tau \Omega_a)
        \]
        where $\bar{h}_s = D(h_s, I, I),\ \bar{H}_s = R \circ D(H_s, I, I)$.

        \item A chain map
        \[
            f_a\colon \Omega_a \to \Omega_a'
        \]
        induces a $\tau$-map
        \[
            \bar{f}_a^\tau\colon
                D(\Omega_s, \Omega_a, \tau \Omega_a) \to
                D(\Omega_s, \Omega_a', \tau \Omega_a')
        \]
        where $\bar{f}_a^\tau = (D(I, f_a, \tau f_a), 0)$.

        \item A homotopy $h_a$ between chain maps
        \[
            f_a \htpy f_a'\colon
                \Omega_a \to \Omega_a'
        \]
        induces a $\tau$-homotopy $\bar{h}_a^\tau = (\bar{h}_a, \bar{H}_a)$ between the induced equivariant chain maps
        \[
            \bar{f}_a^\tau \htpy \bar{f}_a'^\tau \colon
                D(\Omega_s, \Omega_a, \tau \Omega_a) \to
                D(\Omega_s, \Omega_a', \tau \Omega_a')
        \]
        where
        \[
            \bar{h}_a = D(I, h_a, \tau f_a) + D(I, f_a', \tau h_a),\quad
            \bar{H}_a = R \circ D(i_s, h_a, \tau h_a).
        \]
    \end{enumerate}
\end{proposition}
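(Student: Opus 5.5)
The plan is to verify each of the four claims directly against \Cref{def:tau-equiv-complex} and \Cref{def:tau-equiv-homotopy}. Two structural facts will carry most of the argument. First, planar composition $D(-,-,-)$ is functorial in each slot and commutes with $[d,-]$ slotwise, up to the usual Koszul signs, which are irrelevant over $\FF_2$. Second, the reordering isomorphism $R$ is natural: for maps $a,b,c$ in the three slots,
\[
    R\circ D(\tau a, b, \tau c) = (\tau D)(\tau a, \tau c, b)\circ R ,
\]
and the right-hand side equals $\tau\bigl(D(a, \tau b, c)\bigr)\circ R$, since $\tau$ applied to a composite complex is computed slot by slot with the disks swapped. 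With these in hand, every identity reduces to a single-slot computation. Throughout I write $i = R\circ D(i_s,I,I)$ and $i' = R\circ D(i_s',I,I)$.

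For (1), $D(f_s,I,I)$ is a chain map. I will compute
\[
    i'D(f_s,I,I) - \tau(D(f_s,I,I))\,i = R\circ D\bigl(i_s'f_s - \tau(f_s)i_s,\,I,\,I\bigr),
\]
using naturality of $R$ to move $\tau(D(f_s,I,I))$ past $R$. The bracket in the first slot equals $[d,F_s]$, so the whole expression is $R\circ D([d,F_s],I,I) = [d, R\circ D(F_s,I,I)]$, because $R$ is a chain isomorphism. Part (2) follows the same pattern. It is immediate that $[d,\bar h_s] = D(f_s - f_s', I, I)$. The remaining condition, $\bar F_s - \bar F_s' = [d,\bar H_s] + i'\bar h_s - \tau(\bar h_s)\,i$, reduces after factoring out $R$ to the on-axis identity $F_s - F_s' = [d,H_s] + i_s'h_s - \tau(h_s)i_s$ placed in the first slot.

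For (3), the map $D(I,f_a,\tau f_a)$ is a chain map. Strict equivariance means $i'\,D(I,f_a,\tau f_a) = \tau(D(I,f_a,\tau f_a))\,i$. Both sides should equal $R\circ D(i_s,f_a,\tau f_a)$: the left side by interchange of the slots, the right side because $\tau$ sends $D(I,f_a,\tau f_a)$ to $(\tau D)(I,\tau f_a, f_a)$, and $R$ converts this to $D(I,f_a,\tau f_a)$ in the original slot order.

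Part (4) is the main obstacle. The homotopy in the off-axis slots is a two-variable tensor homotopy, and the degree $-2$ correction $\bar H_a$ must absorb the asymmetry of the chosen ordering (left slot first). I will first check $[d,\bar h_a] = D(I, f_a - f_a', \tau f_a) + D(I, f_a', \tau f_a - \tau f_a')$, which telescopes to $D(I,f_a,\tau f_a) - D(I,f_a',\tau f_a')$. Since $F = G = 0$ here, the required identity is
\[
    0 = [d,\bar H_a] + i'\bar h_a - \tau(\bar h_a)\,i .
\]
Using naturality of $R$, I will write $i'\bar h_a = R\circ D(i_s, h_a, \tau f_a) + R\circ D(i_s, f_a', \tau h_a)$. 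I will also write $\tau(\bar h_a)\,i = R\circ D(i_s, f_a, \tau h_a) + R\circ D(i_s, h_a, \tau f_a')$, where the roles of the two slots swap under $\tau$. Subtracting, and using $[d,h_a] = f_a - f_a'$ in each slot, the difference should equal
\[
    -R\circ D\bigl(i_s, [d,h_a], \tau h_a\bigr) + R\circ D\bigl(i_s, h_a, \tau[d,h_a]\bigr),
\]
which is $-[d, R\circ D(i_s,h_a,\tau h_a)]$ up to sign, noting that $i_s$ is a chain map. I expect the real work to be the bookkeeping of signs and of the slot swap under $\tau$. Over $\FF_2$ the signs disappear, so I would first carry out the verification in characteristic two and only remark on how the Koszul signs would need to be adjusted over $\ZZ$.
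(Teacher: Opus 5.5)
Your proposal is correct and is essentially the paper's proof. Parts (1)--(3) are verified by factoring through $R$ using slotwise functoriality of $D$ and naturality of $R$; the paper draws this as two stacked squares, the lower one strictly commuting. Part (4) is the same telescoping computation, ending in $[d, R\circ D(i_s,h_a,\tau h_a)]$ over $\FF_2$.

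One small slip: your general naturality formula should read $(\tau D)(\tau a,\tau c,b)=\tau(D(a,c,\tau b))$, not $\tau(D(a,\tau b,c))$, since the latter puts maps in the wrong disks. The specific instances you actually use in (3) and (4) are the correct ones, so the argument is unaffected.
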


\begin{proof}
    \begin{enumerate}
        \item Consider the following homotopy commutative diagram:
        \[
            \begin{tikzcd}[row sep=3em, column sep=4em]
            {D(\Omega_s, \Omega_a, \tau \Omega_a) }
                \arrow[r, "{D(f_s, I, I)}"] 
                \arrow[d, "{D(i_s, I, I)}"'] &
            {D(\Omega_s', \Omega_a, \tau \Omega_a)} 
                \arrow[d, "{D(i_s', I, I)}"]
                \arrow[ld, "{D(F_s, I, I)}"', Rightarrow] \\
            {D(\tau \Omega_s, \Omega_a, \tau \Omega_a)} 
                \arrow[r, "{D(\tau f_s, I, I)}"]
                \arrow[d, "R"'] & 
            {D(\tau \Omega_s', \Omega_a, \tau \Omega_a)}
                \arrow[d, "R"] 
                \arrow[ld, equal] \\
            {(\tau D)(\tau \Omega_s, \tau \Omega_a, \Omega_a)}
                \arrow[r, "{(\tau D)(\tau f_s, I, I)}"] &
            {(\tau D)(\tau \Omega_s', \tau \Omega_a, \Omega_a)}.
            \end{tikzcd}
        \]
        \item Consider a diagram similar to that of item (1).

        \item Consider the following strictly commutative diagram:
        \[
            \begin{tikzcd}[row sep=3em, column sep=4em]
            {D(\Omega_s, \Omega_a, \tau \Omega_a)} 
                \arrow[r, "{D(I, f_a, \tau f_a)}"] 
                \arrow[d, "{D(i_s, I, I)}"'] & 
            {D(\Omega_s, \Omega_a', \tau \Omega_a')} 
                \arrow[d, "{D(i_s, I, I)}"] 
                \arrow[ld, equal] \\
            {D(\tau \Omega_s, \Omega_a, \tau \Omega_a)} 
                \arrow[r, "{D(I, f_a, \tau f_a)}"] 
                \arrow[d, "R"'] & 
            {D(\tau \Omega_s, \Omega_a', \tau \Omega_a')}
                \arrow[d, "R"] 
                \arrow[ld, equal] \\
            {(\tau D)(\tau \Omega_s, \tau \Omega_a, \Omega_a)} 
                \arrow[r, "{(\tau D)(I, \tau f_a, f_a)}"] &
            {(\tau D)(\tau \Omega_s, \tau \Omega_a', \Omega_a').}
            \end{tikzcd}
        \]
        \item With $f_a - f_a' = [d, h_a]$, we have
        \begin{align*}
            D(I, f_a, \tau f_a) - D(I, f_a', \tau f_a')
            &= D(I, f_a - f_a', \tau f_a) + D(I, f_a', \tau (f_a - f_a')) \\
            &= D(I, [d, h_a], \tau f_a) + D(I, f_a', \tau [d, h_a]) \\
            &= [d_D, D(I, h_a, \tau f_a) + D(I, f_a', \tau h_a)] \\
            &= [d_D, \bar{h}_a].
        \end{align*}
        For the second component, note that $i_s$ is a chain map, so that
        \begin{align*}
            i' \bar{h}_a - \tau(\bar{h}_a)\, i
            &= R \circ \left(\, D(i_s, h_a, \tau(f_a - f_a')) + D(i_s, f_a - f_a', \tau h_a) \,\right) \\
            &= R \circ \left(\, D(i_s, h_a, \tau [d, h_a]) + D(i_s, [d, h_a], \tau h_a) \,\right) \\
            &= [d_D,\, R \circ D(i_s, h_a, \tau h_a)]
            = [d_D, \bar{H}_a],
        \end{align*}
        which gives the required relation. \qedhere
    \end{enumerate}
\end{proof}

\subsection{Equivariant delooping and Gaussian elimination}

Hereafter, we give explicit $\tau$-equivariant reduction methods for $\tau$-matched complexes (see \Cref{subsec:tau-matched-cpx}) by applying delooping and Gaussian elimination equivariantly. For a $\tau$-matched complex $(\Omega, j)$, we make the matching $j$ implicit, and for each $X \in S(\Omega)$, we abuse notation and let $\tau X$ denote $jX \in S(\Omega)$.

\begin{definition}
    For a $\tau$-matched complex $\Omega$, a summand $X \in S(\Omega)$ is \textit{$\tau$-invariant} if $\tau X = X$.
\end{definition}

\begin{remark}
    The component $d_{X, Y}$ of the differential $d$ between $\tau$-invariant summands $X, Y$ is necessarily $\tau$-invariant, i.e. $\tau(d_{X, Y}) = d_{X, Y}$. 
\end{remark}

\begin{example}
    In \Cref{fig:tau-equiv-cpx}, the $\tau$-invariant summands of $[T]$ are $000, 001, 110, 111$. Note that the cobordisms $000 \to 001$ and $110 \to 111$ are $\tau$-invariant. 
\end{example}

\begin{proposition}[Equivariant delooping]
\label{prop:equiv-deloop}
    Suppose $\Omega$ is a $\tau$-matched complex, $X$ is a summand of $\Omega$, and $C$ is a circle in $X$.
    \begin{enumerate}
        \item If $X$ is $\tau$-invariant and $C$ is symmetric (i.e. $\tau C = C$), then delooping $C$ in $X$ gives a $\tau$-matched complex $\Omega'$ that is isomorphic to $\Omega$. 
        \item If $X$ is $\tau$-invariant and $C$ is asymmetric, then $C' = \tau C$ is another circle in $X$ disjoint from $C$, and delooping both $C$ and $C'$ in $X$ gives a $\tau$-matched complex $\Omega'$ that is isomorphic to $\Omega$. 
        \item If $X$ is not $\tau$-invariant, then $\tau X$ contains $\tau C$, and delooping both $C$ in $X$ and $\tau C$ in $\tau X$ gives a $\tau$-matched complex $\Omega'$ that is isomorphic to $\Omega$.
    \end{enumerate}
    In all three cases, the isomorphisms preserve the $\tau$-matchings. 
\end{proposition}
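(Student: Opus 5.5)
The plan is to build, in each of the three cases, explicit isomorphisms $\Phi\colon \Omega \to \Omega'$ and $\Psi\colon \Omega' \to \Omega$ out of the ordinary delooping isomorphisms of \Cref{prop:delooping}. We then check that $\Phi$ and $\Psi$ preserve the $\tau$-matchings in the sense of \Cref{def:tau-pres-map}. By \Cref{prop:tau-matched-map} they are then strictly equivariant $\tau$-maps, and they are mutually inverse.

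The first step is to put a $\tau$-matching on $\Omega'$. Delooping replaces the summand $X$ (and $\tau X$ in case (3)) by $q$-shifted copies $q^{\pm1}X \setminus C$, and by $q^{\pm1}q^{\pm1}(X\setminus(C\cup C'))$ in case (2). The differential of $\Omega'$ is the conjugate $\Phi d \Psi$, and its components are obtained from those of $d$ by composing with the cup/cap cobordisms, dotted or undotted, appearing in the delooping matrices. The matching $j'$ is defined summand by summand:
\begin{enumerate}
\item In case (1), each new summand $q^{\pm1}(X\setminus C)$ is matched with itself.
\item In case (2), the summand $q^{a}q^{b}$ (with $a$ recording the label of $C$ and $b$ that of $C'$) is matched with $q^{b}q^{a}$.
\item In case (3), the summand $q^{a}(X\setminus C)$ is matched with $q^{a}(\tau X\setminus \tau C)$.
\end{enumerate}
All other summands keep their old matching. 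Conditions (i) and (iii) of \Cref{def:tau-matching} are then immediate.

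The key observation is that the delooping cobordisms for $C$ are cups and caps on $C$ carrying a solid dot, a hollow dot, or no dot. Applying $\tau$ to such a cobordism gives the corresponding cobordism for $\tau C$ with the same dot type, because $\tau$ is a symmetry of the local relations: $h$ and $t$ are closed surfaces fixed by $\tau$, and the hollow dot is defined using $h$. In case (1), the full delooping matrix for $C$ is therefore $\tau$-invariant. In case (3), $\tau$ carries the delooping matrix for $C$ in $X$ entrywise to the one for $\tau C$ in $\tau X$, compatibly with $j'$.

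Case (2) is where I expect the main care to be needed. There the matrix is the composite of delooping $C$ and then $C'$. Since $C$ and $C'$ are disjoint, these two delooping operations commute, so the composite is symmetric under swapping the roles of $C$ and $C'$. Hence $\tau$ maps the $(a,b)$ entry to the $(b,a)$ entry, which is exactly what the matching $j'$ requires. With this established, each component of $\Phi$ satisfies $\tau(\Phi_{Z,W}) = \Phi_{jZ,j'W}$, and likewise for $\Psi$.

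Condition (ii) for $\Omega'$ then follows formally: $d' = \Phi d \Psi$ is a composite of $\tau$-preserving maps, so $\tau(d'_{Z,W}) = d'_{j'Z, j'W}$. Finally, $\Phi$ and $\Psi$ are mutually inverse by \Cref{prop:delooping}, and they are the identity on the untouched summands. This gives the claimed $\tau$-matching-preserving isomorphism in all three cases.
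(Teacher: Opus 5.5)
Your proposal is correct and follows the paper's proof. Your matchings $j'$ are exactly the ones in the paper's diagrams: each new summand fixed in case (1), the mixed labels swapped $(a,b)\leftrightarrow(b,a)$ in case (2), and $q^{a}(X\setminus C)\leftrightarrow q^{a}(\tau X\setminus \tau C)$ in case (3). Beyond the paper's ``one verifies'', you also check that $\tau$ carries the dotted, hollow-dotted and undotted caps and cups on $C$ to those on $\tau C$, and that condition (ii) for $\Omega'$ follows because $d'=\Phi d\Psi$ is a composite of matching-preserving maps.
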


\begin{proof}
    Let $j$ denote the $\tau$-matching of $\Omega$. 
    \begin{enumerate}
        \item Let $X'$ denote the diagram obtained from $X$ by removing $C$. The delooping isomorphism gives $X \isom qX' \oplus q^{-1}X'$. Let $\Omega'$ be the resulting complex. The matching is defined on the two summands as identity so that they are $\tau$-invariant, and by $j$ on the complement. Hence the isomorphism preserves the $\tau$-matching.

        \item Let $X'$ denote the diagram obtained from $X$ by removing $C$ and $C'$. The delooping isomorphism gives
        \begin{align*}
            X 
            &\isom q(X \setminus C) \oplus q^{-1}(X \setminus C) \\
            &\isom (q^2 X' \oplus X') \oplus (X' \oplus q^{-2}X').
        \end{align*}
        Let $\Omega'$ be the resulting complex. The matching $j'$ is defined on the summands as in the following diagram, where the upper half depicts $f\colon \Omega \to \Omega'$ restricted to $X$, the lower half its $\tau$-image, and the middle vertical arrows the matching $j'$ on $S(\Omega')$.

        \[
        \begin{tikzcd}[row sep=1.5em]
            & & X \arrow[ld] \arrow[rd] \arrow[d, "\small \text{deloop $C$}", phantom] & & \\
            & q^1 (X \setminus C) \arrow[ld] \arrow[d] & {} \arrow[d, "\small \text{deloop $C'$}", phantom] & q^{-1} (X \setminus C) \arrow[d] \arrow[rd] & \\
            q^2 X' \arrow[dd, "j'", dashed] & X' \arrow[rrdd, dashed] & {} & X' \arrow[lldd, "j'" description, dashed] & q^{-2}X' \arrow[dd, "j'", dashed] \\
            \\
            q^2 X' & X' & {} & X' & q^{-2}X' \\
            & q^1 (X \setminus C') \arrow[lu] \arrow[u] & {} \arrow[d, "\small \text{deloop $C'$}", phantom] \arrow[u, "\small \text{deloop $C$}", phantom] & q^{-1} (X \setminus C') \arrow[u] \arrow[ru] & \\
            & & X \arrow[lu] \arrow[ru] & & 
        \end{tikzcd}
        \]
        One verifies that $j'$ is indeed a $\tau$-matching, and that the isomorphism preserves the matchings. 

        \item Put $Y = \tau X$. Let $X'$ denote the diagram obtained from $X$ by removing $C$, and $Y'$ the diagram obtained from $Y$ by removing $C' = \tau C$. By the delooping isomorphism, we have
        \[
            X \oplus Y \isom (qX' \oplus q^{-1}X') \oplus (qY' \oplus q^{-1}Y').
        \]
        Let $\Omega'$ be the resulting complex. The matching $j'$ is defined as in the following diagram. 
        \[
        \begin{tikzcd}
            & X \arrow[ld] \arrow[rd] \arrow[d, "\small \text{deloop $C$}", phantom] & & & Y \arrow[ld] \arrow[rd] \arrow[d, "\small \text{deloop $C'$}", phantom] & \\
            qX' \arrow[rrrdd, dashed] & {} & q^{-1}X' \arrow[rrrdd, dashed] & qY' \arrow[llldd, "j'" description, dashed] & {} & q^{-1}Y' \arrow[llldd, "j'" description, dashed] \\
            & & & & & \\
            qY' & {} & q^{-1}Y' & qX' & {} & q^{-1}X' \\
            & Y \arrow[lu] \arrow[ru] \arrow[u, "\small \text{deloop $C'$}", phantom] & & & X \arrow[lu] \arrow[ru] \arrow[u, "\small \text{deloop $C$}", phantom] & 
        \end{tikzcd}
        \]
        Again, one verifies the necessary conditions. \qedhere
    \end{enumerate}
\end{proof}

\begin{proposition}[Equivariant Gaussian elimination]
\label{prop:equiv-gauss-elim}
    Suppose $\Omega$ is a $\tau$-matched complex that contains summands $X, Y$ such that the component $d_{X, Y}: X \to Y$ of the differential $d$ is invertible. 
    \begin{enumerate}
        \item If both $X$ and $Y$ are $\tau$-invariant, then applying Gaussian elimination to $d_{X, Y}$ gives a strong deformation retract $\Omega'$ of $\Omega$ with a $\tau$-matching that is compatible with the strong deformation retraction. 

        \item If neither $X$ nor $Y$ is $\tau$-invariant, and the components of $d$ for $X \to \tau Y$ and $\tau X \to Y$ are both zero, then applying Gaussian elimination to $X \to Y$ and to $\tau X \to \tau Y$ gives a strong deformation retract $\Omega'$ of $\Omega$ with a $\tau$-matching that is compatible with the strong deformation retraction. 
    \end{enumerate}
\end{proposition}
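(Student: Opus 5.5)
The approach is to write out the Gaussian elimination data of \Cref{prop:gauss-elim} explicitly and check that every component is matched under $\tau$. Since the earlier results work with $\tau$-matched complexes, ``compatible with the strong deformation retraction'' will mean the following. The retraction $r$, the inclusion $\iota$ and the homotopy $h$ are all $\tau$-preserving in the sense of \Cref{def:tau-pres-map}, with respect to the original matching $j$ on $S(\Omega)$ and the restricted matching $j'$ on $S(\Omega')$. Then \Cref{prop:tau-matched-map} upgrades them to a strictly equivariant $\tau$-strong deformation retraction $((r,0),(\iota,0),(h,0))$.

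\textbf{Case (1).} Let $X,Y$ be $\tau$-invariant. We have $S(\Omega') = S(\Omega)\setminus\{X,Y\}$, and this set is stable under $j$, so $j' := j|_{S(\Omega')}$ is again an involution with $j'Z = \tau Z$. Write $a = d_{X,Y}$, so $\tau(a) = a$, and hence $\tau(a^{-1}) = a^{-1}$ by functoriality of $\tau$. We check each formula of \Cref{prop:gauss-elim} componentwise.
\begin{itemize}
\item The new differential on $\Omega'$ has components $d_{Z,W} - d_{Y,W}\,a^{-1}\,d_{Z,X}$ for $Z,W \in S(\Omega')$. Condition (ii) for $\Omega$ gives $d_{jZ,Y} = \tau(d_{Z,Y})$ and $d_{X,jW} = \tau(d_{X,W})$, since $jX = X$ and $jY = Y$. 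Applying $\tau$ to the component for $(Z,W)$ therefore yields the component for $(jZ,jW)$, so $(\Omega', j')$ is a $\tau$-matched complex.
\item The retraction has components $(0\ 1)$ and $(-ca^{-1}\ 1)$, the inclusion has components $(-a^{-1}b,\,1)^T$ and $(0,1)^T$, and the homotopy is $-a^{-1}$ placed at $Y \to X$. Each of these is built from identities and products of $a^{-1}$ with differential components into $Y$ or out of $X$. The same computation as above shows each is $\tau$-preserving.
\end{itemize}

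\textbf{Case (2).} Let neither $X$ nor $Y$ be $\tau$-invariant. First, $X \neq \tau X$, and $Y$ cannot equal $\tau X$, since otherwise the component $X \to \tau Y = X$ would be $d_{X,X}$ on a single summand; that component is zero in a complex, while $a$ is invertible, contradicting the hypothesis in the degenerate case. So $X,\tau X,Y,\tau Y$ are four distinct summands. The component $\tau X \to \tau Y$ equals $\tau(a)$ and is invertible. The vanishing hypotheses say the block $\begin{pmatrix} a & 0 \\ 0 & \tau a\end{pmatrix}\colon X\oplus\tau X \to Y\oplus\tau Y$ is diagonal and invertible. We therefore apply \Cref{prop:gauss-elim} once to this $2\times2$ block, which is the same as eliminating the two pairs in succession, because the diagonal form means the first elimination does not alter the component $\tau X\to\tau Y$. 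The inverse is $\mathrm{diag}(a^{-1},\tau(a^{-1}))$.

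Take $j'$ to be the restriction of $j$ to $S(\Omega)\setminus\{X,\tau X,Y,\tau Y\}$, which is $j$-stable. The new differential is $d_{Z,W} - d_{Y,W}a^{-1}d_{Z,X} - d_{\tau Y,W}\,\tau(a^{-1})\,d_{Z,\tau X}$. Applying $\tau$ swaps the two correction terms, using $d_{jZ,\tau X} = \tau(d_{Z,X})$ and related identities. The same swap, performed blockwise, shows that $r$, $\iota$ and $h$ are $\tau$-preserving.

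\textbf{Main obstacle.} The bookkeeping is routine. The step needing care is case (2): the vanishing of the cross components is exactly what makes the combined pivot block diagonal, so that its inverse is $\tau$-symmetric. Without it, the correction terms would involve inverting a non-diagonal $2\times2$ matrix, and the sequential eliminations would not be $\tau$-images of each other. A final point to check is that the side conditions of a strong deformation retraction ($hi=0$, $rh=0$, $h^2=0$) survive. They do, because they hold for the non-equivariant data of \Cref{prop:gauss-elim} and equivariance is only an additional property of the same maps.
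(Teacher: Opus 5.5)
Your proof is correct and follows essentially the paper's argument: restrict $j$ to the surviving summands and check that $\tau$ sends each modified component (new differential, retraction, inclusion, homotopy) to its matched counterpart. In case (2) the vanishing cross components make the two pivots independent; your block-diagonal elimination is just the simultaneous form of the paper's ``eliminating one arrow does not affect the other.'' There are two small slips: the correction term should read $d_{Z,W} - d_{X,W}\,a^{-1}\,d_{Z,Y}$, since as written it does not compose, although the identities you cite are the right ones for the corrected formula; and $Y \neq \tau X$ holds simply because the matching preserves homological degree (as it must for the induced $i$ to be a chain isomorphism) while $d_{X,Y}$ raises it, not for the reason you give.
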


\begin{proof} 
    Let $j$ denote the $\tau$-matching of $\Omega$, and put $a = d_{X, Y}\colon X \to Y$.
    \begin{enumerate}
        \item From the assumption, the morphism $a$ is necessarily $\tau$-invariant. The effect of applying Gaussian elimination to $a$ is that the summands $X, Y$ are removed, and morphisms are modified as in \Cref{prop:gauss-elim}, giving a strong deformation retract $\Omega'$ of $\Omega$. We define the matching $j'$ on $\Omega'$ by restricting $j$ to the remaining summands. To check that $j'$ satisfies the necessary conditions, consider any pair of summands $X', Y'$ of $\Omega$ (which may or may not be $\tau$-invariant) that forms the following arrows:
        \[
        \begin{tikzcd}[column sep = 3em]
            X \arrow[r, "a"] \arrow[rd, "c"', pos=.8] & Y \\
            X' \arrow[r, "d"'] \arrow[ru, "b", pos=.8] & Y'.
        \end{tikzcd}
        \]
        Then we have arrows
        \[
        \begin{tikzcd}[column sep = 3em]
            X \arrow[r, "a"] \arrow[rd, "\tau c"', pos=.8] & Y \\
            \tau X' \arrow[r, "\tau d"'] \arrow[ru, "\tau b", pos=.8] & \tau Y'.
        \end{tikzcd}
        \]
        After the elimination, the arrow between $X'$ and $Y'$ is given by $d - c a^{-1} b$, and the arrow between $\tau X'$ and $\tau Y'$ is given by 
        \[
            \tau d - (\tau c) a^{-1} (\tau b) = \tau (d - c a^{-1} b). 
        \]
        Thus $j'$ gives a matching on $\Omega'$, which is compatible with the maps defining the strong deformation retraction. 
        
        \item The proof is similar to the previous case, except that $X \xrightarrow{a} Y$ and $\tau X \xrightarrow{\tau a} \tau Y$ are distinct arrows. From the assumption, eliminating one of the arrows will not affect the other. \qedhere
    \end{enumerate}    
\end{proof}

\begin{remark}
    In the second case of \Cref{prop:equiv-gauss-elim}, the condition that $X \to \tau Y$ is zero is equivalent to $\tau X \to Y$ being zero. Without this assumption, eliminating the arrow $a$ will subtract a non-trivial morphism from the other arrow, which may make it non-invertible.
    \[
        \begin{tikzcd}[column sep = 3em]
            X \arrow[r, "a"] \arrow[rd, dashed] & Y \\
            {\tau X} \arrow[r, "\tau a"'] \arrow[ru, dashed] & {\tau Y}.
        \end{tikzcd}
    \]
    The remaining case, where exactly one of $X, Y$ is $\tau$-invariant, admits no equivariant elimination.
\end{remark}

\subsection{Symmetry-breaking reduction}

Finally, we restrict to the case $B = \emptyset$ and consider the reduction after constructing the cone. Recall 
\[
    \cal{Q}(\Omega) := \Cone(\Omega \xrightarrow{I - I_\tau} \Omega)
\]
defined in \Cref{prop:tau-equiv-to-cone}. To distinguish the two copies of each summand of $\Omega$ inside $\cal{Q}(\Omega)$, for each summand $X$ of $\Omega$, we let $\underline{X} = X$ and $\overline{X} = X[1]$ denote the corresponding pair of objects in $\cal{Q}(\Omega)$. If $X$ is not $\tau$-invariant, there are invertible arrows in $\cal{Q}(\Omega)$ of the form: 
\[
    \begin{tikzcd}[column sep = 3em]
    \underline{X} 
        \arrow[rd] \arrow[d, "I"'] & 
    \underline{\tau X} 
        \arrow[ld, "I_\tau" description] 
        \arrow[d, "I"] \\
    \overline{X} & 
    \overline{\tau X}.
    \end{tikzcd}
\]
If $X$ is $\tau$-invariant, there is an arrow in $\cal{Q}(\Omega)$ of the form: 
\[
    \begin{tikzcd}[column sep = 3em]
    \underline{X} 
        \arrow[d, "I - I_\tau"] \\
    \overline{X}
    \end{tikzcd}
\]
which is neither zero nor invertible in general. In particular, if all circles of $X$ are symmetric, then $I = I_\tau$.

\begin{proposition}
\label{prop:vertical-reduction}
    Let $\Omega$ be a $\tau$-matched complex in $\Kob(\emptyset)$. Suppose $X$ is a non-$\tau$-invariant summand of $\Omega$. Then we may apply Gaussian elimination to the invertible arrow
    \[
    \begin{tikzcd}
        {\underline{\tau X}} \arrow[r, "I"] & 
        {\overline{\tau X}},
    \end{tikzcd}
    \]
    in $\Gamma := \cal{Q}(\Omega)$ to get a strong deformation retract $\Gamma'$ of $\Gamma$. The effect of this elimination is as follows: suppose there are summands $Y, Z$ in $\Omega$ forming a partial diagram
    \[
        \begin{tikzcd}[row sep=.5em]
        & X \arrow[rd, "c"] & \\
        Y \arrow[ru, "a"] \arrow[rd, "b"'] & & Z. \\
        & \tau X \arrow[ru, "d"'] & 
        \end{tikzcd}        
    \]
    Then $\Gamma$ contains the following partial diagram
    \[
        \begin{tikzcd}
        \underline{Y} \arrow[rr, "a"] \arrow[rd, "b"'] & & \underline{X} \arrow[rd, "c"] \arrow[dd] \arrow[lddd] & \\
         & \underline{\tau X} \arrow[rr, "d"', pos=0.7] \arrow[dd, "I" description, pos=0.3] \arrow[rd, "I_\tau" description] & & \underline{Z} \\
        \overline{Y} \arrow[rr, "a", pos=0.3] \arrow[rd, "b"'] & & \overline{X} \arrow[rd, "c"] & \\
         & \overline{\tau X} \arrow[rr, "d"'] & & \overline{Z}. 
        \end{tikzcd}        
    \]
    After the elimination, this part transforms into 
    \[
        \begin{tikzcd}
        \underline{Y} \arrow[rr, "a"] \arrow[rd, no head, dotted] & & \underline{X} \arrow[rd, "c - d I_\tau"] \arrow[dd, no head, dotted, ] & \\
         & \arrow[rr, no head, dotted] & & \underline{Z} \\
        \overline{Y} \arrow[rr, "a - I_\tau b"] \arrow[rrru, "-db" description, bend left = 10] \arrow[rd, no head, dotted] & & \overline{X} \arrow[rd, "c"] & \\
         & \arrow[rr, no head, dotted] & & \overline{Z}.
        \end{tikzcd}
    \]
    Such a modification must be applied for each pair $(Y, Z)$ forming a non-trivial square with $(X, \tau X)$ in $\Omega$ as above. This process may be repeated for each pair $\{X, \tau X\}$ of non-$\tau$-invariant summands of $\Omega$.
\end{proposition}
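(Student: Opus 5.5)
This is a direct application of Gaussian elimination (\Cref{prop:gauss-elim}) to the cone $\Gamma = \cal{Q}(\Omega)$, followed by bookkeeping of the correction terms $-ca^{-1}b$. I will first write out the relevant components of the cone differential $D = \begin{pmatrix} d & \\ I - I_\tau & -d\end{pmatrix}$.

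\textbf{Step 1: the components of $D$ near $\underline{\tau X}$.} By the description of $I_\tau$ on a $\tau$-matched complex, $I_\tau$ sends the summand $X$ to $\tau X$ by $J_\tau$ and $\tau X$ to $X$ by $J_\tau$. Hence $I_\tau$ has no diagonal component on a non-invariant summand. The vertical part of $D$ out of $\underline{\tau X}$ therefore consists of $I\colon \underline{\tau X} \to \overline{\tau X}$, which is invertible, and $I_\tau\colon \underline{\tau X}\to\overline{X}$, up to the sign conventions, which are irrelevant over $\FF_2$. Similarly, the arrows into $\overline{\tau X}$ are:
\begin{itemize}
    \item the horizontal arrows $\overline{Y}\xrightarrow{b}\overline{\tau X}$ coming from $-d$;
    \item the vertical arrows from $\underline{\tau X}$ (by $I$) and from $\underline{X}$ (by $I_\tau$).
\end{itemize}
The arrows out of $\underline{\tau X}$ are the horizontal $d\colon \underline{\tau X}\to\underline{Z}$ together with the two vertical ones above.

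\textbf{Step 2: apply \Cref{prop:gauss-elim}.} Eliminate the arrow $a' := I\colon \underline{\tau X}\to\overline{\tau X}$, so $a'^{-1}=I$. The new component from any source $S$ to any target $T$ is $D_{S,T} - D_{\underline{\tau X},T}\,D_{S,\overline{\tau X}}$, where $S$ ranges over summands mapping to $\overline{\tau X}$ and $T$ over summands receiving from $\underline{\tau X}$. The sources are $\underline{X}$ (via $I_\tau$) and $\overline{Y}$ (via $b$). The targets are $\underline{Z}$ (via $d$) and $\overline{X}$ (via $I_\tau$). This gives four cases:
\begin{itemize}
    \item $\underline{X}\to\underline{Z}$ becomes $c - dI_\tau$.
    \item $\underline{X}\to\overline{X}$ becomes $(I - I_\tau)|_{X\to X} - I_\tau I_\tau$. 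The diagonal part of $I-I_\tau$ on $X$ is $I$, and $I_\tau I_\tau = I$ by \Cref{prop:I-tau-involution}, so this entry vanishes. This is the dotted vertical arrow in the picture.
    \item $\overline{Y}\to\overline{X}$ becomes $a - I_\tau b$.
    \item $\overline{Y}\to\underline{Z}$ becomes $0 - db = -db$. This is the new diagonal arrow going "up" in homological degree. It is consistent because the cone grading places $\overline{Y}$ in the same homological degree as $\underline{X}$.
\end{itemize}
The arrows out of $\underline{Y}$, namely $b$, now land on a removed summand. The arrow $\underline{Y}\to\overline{X}$ is unaffected, since $\underline{Y}$ does not map to $\overline{\tau X}$; the only vertical arrow from $\underline Y$ is to $\overline Y$ (or to $\overline{\tau Y}$).

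\textbf{Step 3: generality and iteration.} The modifications are additive over all pairs $(Y,Z)$, which justifies the phrase "for each pair $(Y,Z)$". Here one must also allow $Y$ or $Z$ to equal $X$, $\tau X$, or their $\tau$-images. The main thing to check is that no further correction terms arise beyond those listed: every arrow into $\overline{\tau X}$ and out of $\underline{\tau X}$ must be accounted for, including contributions of $I_\tau$ from $\underline{\tau Y}$ when $Y$ is non-invariant. Those contribute only into $\overline{Y}$, not into $\overline{\tau X}$, unless $Y = X$, a case already covered.

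For iteration over the other pairs $\{X',\tau X'\}$, the plan is to observe that the elimination does not touch the arrows $\underline{\tau X'}\xrightarrow{I}\overline{\tau X'}$. A correction to this arrow would require a path $\underline{\tau X'}\to\overline{\tau X}$ followed by $\underline{\tau X}\to\overline{\tau X'}$, and these vertical components exist only when $X'\in\{X,\tau X\}$. The remaining arrows therefore stay invertible and can be eliminated successively.

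I expect the main obstacle to be this verification of invertibility under repeated eliminations, i.e.\ that cross terms do not spoil the identity arrows.
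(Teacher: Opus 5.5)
Your proposal is correct and is the same argument as the paper's, whose proof simply cites Gaussian elimination (\Cref{prop:gauss-elim}). You write out the correction terms $-ca^{-1}b$ explicitly, and they agree with the stated picture over $\FF_2$, including the cancellation $I - I_\tau I_\tau = 0$ on $\underline{X}\to\overline{X}$. Your iteration check is also sound: the same matching/degree argument shows that no arrow into $\overline{\tau X'}$ or out of $\underline{\tau X'}$ is altered for $X'\notin\{X,\tau X\}$, so the corrections from different pairs simply add.
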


\begin{proof}
    Immediate from \Cref{prop:gauss-elim}.
\end{proof}

\subsection{Overall procedure}
\label{subsec:reduce-procedure}

\begin{figure}[t]
    \centering
    \begin{tikzcd}[row sep=2.5em, column sep=3em]
        {[L]^\tau} \arrow[r, "{1, 2, 3}"] \arrow[d, "\cal{Q}", maps to] & {\Omega^\tau} \arrow[r, "4"] & \Omega^\tau_0 \arrow[d, "\cal{Q}", maps to] & \\
        {\cal{Q}(L)} \arrow[rr, "5"] & &  \Gamma \arrow[r, "{6, 7}"] &  \Gamma_0
        \end{tikzcd}
    \caption{Reduction of the involutive complex.}
    \label{fig:reduce-scheme}
\end{figure}

Combining the above techniques, we obtain a general procedure for reducing the involutive complex $\CKhI(L)$ for an involutive link $L$. 

\begin{algorithm}
\label{alg:inv-kh-reduction}
    Given an involutive link diagram $L$ with a symmetric tangle decomposition
    \[
        L = D(T_s, T_a, \tau T_a),
    \]
    a strong deformation retract of the involutive Khovanov complex 
    \[
        \cal{Q}(L) := \cal{Q}([L]^\tau)
    \]
    is obtained as follows:
    \begin{enumerate}
        \item Reduce the on-axis $\tau$-complex $[T_s]^\tau$ by applying the equivariant delooping and Gaussian elimination
        \[
            [T_s]^\tau \to \Omega_s^\tau. 
        \]
        \item Reduce the off-axis complex $[T_a]$ by applying the ordinary delooping and Gaussian elimination
        \[
            [T_a] \to \Omega_a.
        \]
        \item Compose the above complexes and maps by $D$ (\Cref{prop:sym-decomp-maps}), to get a $\tau$-equivariant strong deformation retraction
        \[
            [L]^\tau \to D(\Omega_s, \Omega_a, \tau \Omega_a) = \Omega^\tau.
        \]
        Here $\Omega^\tau$ is $\tau$-matched, with the matching induced from that of $\Omega_s^\tau$ together with the interchange of the two off-axis factors.
        \item Further reduce the $\tau$-equivariant complex by applying the equivariant delooping and Gaussian elimination (\Cref{prop:equiv-deloop,prop:equiv-gauss-elim})
        \[
            \Omega^\tau \to \Omega^\tau_0.
        \]
        \item Applying the functor $\cal{Q}$ to get a strong deformation retract of $\cal{Q}(L)$ (the maps obtained above are strictly $\tau$-equivariant, on which $\cal{Q}$ is strictly functorial),
        \[
            \cal{Q}(L) := \cal{Q}([L]^\tau) \to \cal{Q}(\Omega^\tau_0) =: \Gamma.
        \]
        \item Apply symmetry-breaking reductions of \Cref{prop:vertical-reduction},
        \[
            \Gamma \to \Gamma'.
        \]
        \item Further apply delooping and Gaussian elimination until all summands are empty diagrams, and there are no invertible arrows left in the differential,
        \[
            \Gamma' \to \Gamma_0.
        \]
        \item Apply the TQFT $\cal{F} = \Hom([\emptyset], -)$ to obtain a strong deformation retract $\cal{F}(\Gamma_0)$ of $\CKhI(L)$.
    \end{enumerate}
    The procedure up to step 7 is depicted in \Cref{fig:reduce-scheme}.
\end{algorithm}

    \section{Equivariant Rasmussen invariant}
\label{sec:equiv-rasmussen}

Here, we apply the techniques of \Cref{sec:reducing} to efficiently compute the equivariant Rasmussen invariant $(\ds, \us)$ for strongly invertible knots and links, introduced in \cite{Sano:2025b}. 

\subsection{Lee class and Rasmussen invariant}

Here, we review the description of the ordinary $s$-invariant in terms of the \textit{$h$-divisibility of the Lee class}, as given in \cite{KimSano:2025}. First, we additionally impose a local relation $t = 0$, and denote the quotient category by the same symbol $\Cob(B)$. With
\begin{center}
    \tikzset{every picture/.style={line width=0.75pt}} 

\begin{tikzpicture}[x=0.75pt,y=0.75pt,yscale=-1,xscale=1]

\draw  [draw opacity=0] (96.94,23.75) .. controls (96.29,26) and (89.45,27.76) .. (81.11,27.76) .. controls (72.35,27.76) and (65.24,25.81) .. (65.24,23.41) .. controls (65.24,23.26) and (65.27,23.11) .. (65.32,22.97) -- (81.11,23.41) -- cycle ; \draw  [color={rgb, 255:red, 128; green, 128; blue, 128 }  ,draw opacity=1 ] (96.94,23.75) .. controls (96.29,26) and (89.45,27.76) .. (81.11,27.76) .. controls (72.35,27.76) and (65.24,25.81) .. (65.24,23.41) .. controls (65.24,23.26) and (65.27,23.11) .. (65.32,22.97) ;  
\draw  [draw opacity=0][dash pattern={on 0.84pt off 2.51pt}] (65.63,22.33) .. controls (66.9,20.27) and (73.43,18.7) .. (81.28,18.7) .. controls (89.93,18.7) and (96.96,20.6) .. (97.15,22.96) -- (81.28,23.06) -- cycle ; \draw  [color={rgb, 255:red, 128; green, 128; blue, 128 }  ,draw opacity=1 ][dash pattern={on 0.84pt off 2.51pt}] (65.63,22.33) .. controls (66.9,20.27) and (73.43,18.7) .. (81.28,18.7) .. controls (89.93,18.7) and (96.96,20.6) .. (97.15,22.96) ;  
\draw   (65.12,23.41) .. controls (65.12,14.57) and (72.28,7.41) .. (81.11,7.41) .. controls (89.95,7.41) and (97.11,14.57) .. (97.11,23.41) .. controls (97.11,32.24) and (89.95,39.4) .. (81.11,39.4) .. controls (72.28,39.4) and (65.12,32.24) .. (65.12,23.41) -- cycle ;
\draw  [fill={rgb, 255:red, 0; green, 0; blue, 0 }  ,fill opacity=1 ] (74.63,14.68) .. controls (74.63,13.55) and (75.55,12.63) .. (76.68,12.63) .. controls (77.82,12.63) and (78.73,13.55) .. (78.73,14.68) .. controls (78.73,15.81) and (77.82,16.73) .. (76.68,16.73) .. controls (75.55,16.73) and (74.63,15.81) .. (74.63,14.68) -- cycle ;
\draw  [fill={rgb, 255:red, 0; green, 0; blue, 0 }  ,fill opacity=1 ] (84.23,14.68) .. controls (84.23,13.55) and (85.15,12.63) .. (86.28,12.63) .. controls (87.42,12.63) and (88.33,13.55) .. (88.33,14.68) .. controls (88.33,15.81) and (87.42,16.73) .. (86.28,16.73) .. controls (85.15,16.73) and (84.23,15.81) .. (84.23,14.68) -- cycle ;

\draw (14,14.61) node [anchor=north west][inner sep=0.75pt]    {$h\ =\ $};

\end{tikzpicture}
\end{center}
and the hollow dot
\begin{center}
    \tikzset{every picture/.style={line width=0.75pt}} 

\begin{tikzpicture}[x=0.75pt,y=0.75pt,yscale=-.75,xscale=.75]

\draw  [dash pattern={on 0.84pt off 2.51pt}] (10.93,11.56) -- (56.56,11.56) -- (56.56,55.49) -- (10.93,55.49) -- cycle ;
\draw  [dash pattern={on 0.84pt off 2.51pt}] (101.41,12.06) -- (148.08,12.06) -- (148.08,56.99) -- (101.41,56.99) -- cycle ;
\draw  [color={rgb, 255:red, 0; green, 0; blue, 0 }  ,draw opacity=1 ][fill={rgb, 255:red, 255; green, 255; blue, 255 }  ,fill opacity=1 ][line width=0.75]  (29.5,34.2) .. controls (29.5,32.43) and (30.93,31) .. (32.7,31) .. controls (34.47,31) and (35.91,32.43) .. (35.91,34.2) .. controls (35.91,35.97) and (34.47,37.41) .. (32.7,37.41) .. controls (30.93,37.41) and (29.5,35.97) .. (29.5,34.2) -- cycle ;
\draw  [fill={rgb, 255:red, 0; green, 0; blue, 0 }  ,fill opacity=1 ] (121.54,34.52) .. controls (121.54,32.75) and (122.97,31.32) .. (124.74,31.32) .. controls (126.51,31.32) and (127.95,32.75) .. (127.95,34.52) .. controls (127.95,36.29) and (126.51,37.73) .. (124.74,37.73) .. controls (122.97,37.73) and (121.54,36.29) .. (121.54,34.52) -- cycle ;
\draw  [dash pattern={on 0.84pt off 2.51pt}] (209.95,11.91) -- (254.54,11.91) -- (254.54,54.84) -- (209.95,54.84) -- cycle ;

\draw (68,25.32) node [anchor=north west][inner sep=0.75pt]    {$=$};
\draw (167,24.24) node [anchor=north west][inner sep=0.75pt]    {$-$};
\draw (190,24.17) node [anchor=north west][inner sep=0.75pt]    {$h$};

\end{tikzpicture}
\end{center}
the following equations hold:
\begin{center}
    \tikzset{every picture/.style={line width=0.75pt}} 

\begin{tikzpicture}[x=0.75pt,y=0.75pt,yscale=-.75,xscale=.75]

\draw  [dash pattern={on 0.84pt off 2.51pt}] (10.93,11.06) -- (56.56,11.06) -- (56.56,54.99) -- (10.93,54.99) -- cycle ;
\draw  [dash pattern={on 0.84pt off 2.51pt}] (115.41,11.06) -- (162.08,11.06) -- (162.08,55.99) -- (115.41,55.99) -- cycle ;
\draw  [fill={rgb, 255:red, 0; green, 0; blue, 0 }  ,fill opacity=1 ] (23.5,34.2) .. controls (23.5,32.43) and (24.93,31) .. (26.7,31) .. controls (28.47,31) and (29.91,32.43) .. (29.91,34.2) .. controls (29.91,35.97) and (28.47,37.41) .. (26.7,37.41) .. controls (24.93,37.41) and (23.5,35.97) .. (23.5,34.2) -- cycle ;
\draw  [fill={rgb, 255:red, 0; green, 0; blue, 0 }  ,fill opacity=1 ] (36,34.2) .. controls (36,32.43) and (37.43,31) .. (39.2,31) .. controls (40.97,31) and (42.41,32.43) .. (42.41,34.2) .. controls (42.41,35.97) and (40.97,37.41) .. (39.2,37.41) .. controls (37.43,37.41) and (36,35.97) .. (36,34.2) -- cycle ;
\draw  [fill={rgb, 255:red, 0; green, 0; blue, 0 }  ,fill opacity=1 ] (135.54,33.52) .. controls (135.54,31.75) and (136.97,30.32) .. (138.74,30.32) .. controls (140.51,30.32) and (141.95,31.75) .. (141.95,33.52) .. controls (141.95,35.29) and (140.51,36.73) .. (138.74,36.73) .. controls (136.97,36.73) and (135.54,35.29) .. (135.54,33.52) -- cycle ;
\draw  [dash pattern={on 0.84pt off 2.51pt}] (204.93,11.06) -- (250.56,11.06) -- (250.56,54.99) -- (204.93,54.99) -- cycle ;
\draw  [dash pattern={on 0.84pt off 2.51pt}] (322.41,11.06) -- (369.08,11.06) -- (369.08,55.99) -- (322.41,55.99) -- cycle ;
\draw  [color={rgb, 255:red, 0; green, 0; blue, 0 }  ,draw opacity=1 ][fill={rgb, 255:red, 255; green, 255; blue, 255 }  ,fill opacity=1 ] (217.5,34.06) .. controls (217.5,32.29) and (218.93,30.85) .. (220.7,30.85) .. controls (222.47,30.85) and (223.91,32.29) .. (223.91,34.06) .. controls (223.91,35.83) and (222.47,37.26) .. (220.7,37.26) .. controls (218.93,37.26) and (217.5,35.83) .. (217.5,34.06) -- cycle ;
\draw  [color={rgb, 255:red, 0; green, 0; blue, 0 }  ,draw opacity=1 ][fill={rgb, 255:red, 255; green, 255; blue, 255 }  ,fill opacity=1 ] (230,34.06) .. controls (230,32.29) and (231.43,30.85) .. (233.2,30.85) .. controls (234.97,30.85) and (236.41,32.29) .. (236.41,34.06) .. controls (236.41,35.83) and (234.97,37.26) .. (233.2,37.26) .. controls (231.43,37.26) and (230,35.83) .. (230,34.06) -- cycle ;
\draw  [color={rgb, 255:red, 0; green, 0; blue, 0 }  ,draw opacity=1 ][fill={rgb, 255:red, 255; green, 255; blue, 255 }  ,fill opacity=1 ] (342.54,34.38) .. controls (342.54,32.61) and (343.97,31.17) .. (345.74,31.17) .. controls (347.51,31.17) and (348.95,32.61) .. (348.95,34.38) .. controls (348.95,36.15) and (347.51,37.58) .. (345.74,37.58) .. controls (343.97,37.58) and (342.54,36.15) .. (342.54,34.38) -- cycle ;
\draw  [dash pattern={on 0.84pt off 2.51pt}] (415.93,11.06) -- (461.56,11.06) -- (461.56,54.99) -- (415.93,54.99) -- cycle ;
\draw  [color={rgb, 255:red, 0; green, 0; blue, 0 }  ,draw opacity=1 ][fill={rgb, 255:red, 0; green, 0; blue, 0 }  ,fill opacity=1 ] (428.5,34.56) .. controls (428.5,32.79) and (429.93,31.35) .. (431.7,31.35) .. controls (433.47,31.35) and (434.91,32.79) .. (434.91,34.56) .. controls (434.91,36.33) and (433.47,37.76) .. (431.7,37.76) .. controls (429.93,37.76) and (428.5,36.33) .. (428.5,34.56) -- cycle ;
\draw  [color={rgb, 255:red, 0; green, 0; blue, 0 }  ,draw opacity=1 ][fill={rgb, 255:red, 255; green, 255; blue, 255 }  ,fill opacity=1 ] (441,34.56) .. controls (441,32.79) and (442.43,31.35) .. (444.2,31.35) .. controls (445.97,31.35) and (447.41,32.79) .. (447.41,34.56) .. controls (447.41,36.33) and (445.97,37.76) .. (444.2,37.76) .. controls (442.43,37.76) and (441,36.33) .. (441,34.56) -- cycle ;

\draw (68,26.21) node [anchor=north west][inner sep=0.75pt]    {$=$};
\draw (177,36.21) node [anchor=north west][inner sep=0.75pt]    {$,$};
\draw (95,26.21) node [anchor=north west][inner sep=0.75pt]    {$h$};
\draw (262,26.21) node [anchor=north west][inner sep=0.75pt]    {$=$};
\draw (288,26.21) node [anchor=north west][inner sep=0.75pt]    {$-h$};
\draw (387,36.21) node [anchor=north west][inner sep=0.75pt]    {$,$};
\draw (470,26.21) node [anchor=north west][inner sep=0.75pt]    {$=\ 0.$};

\end{tikzpicture}
\end{center}
The corresponding Frobenius extension is 
\[
    R = \ZZ[h],\quad A = R[X]/(X^2 - hX),
\]
and with $Y = X - h$, we have equations
\begin{gather*}
    X^2 = hX,\ XY = 0,\ Y^2 = -hY,\\
    \Delta X = X \otimes X,\ \Delta Y = Y \otimes Y.
\end{gather*}
The link homology theory obtained from the corresponding TQFT is the \textit{$U(1)$-equivariant Khovanov homology}, also called the \textit{bigraded Bar-Natan homology}. 

\begin{figure}[t]
    \centering
    \begin{subfigure}[t]{0.3\linewidth}
        \centering
        \tikzset{every picture/.style={line width=0.75pt}} 

\begin{tikzpicture}[x=0.75pt,y=0.75pt,yscale=-.8,xscale=.8]

\clip (0,0) rectangle + (100, 100);

\draw [line width=1.5]    (70.58,41.39) .. controls (63.5,-18) and (4,25.5) .. (42.36,77.01) ;
\draw [line width=1.5]    (53.15,86.58) .. controls (94,115) and (123,32) .. (35.5,45.5) ;
\draw [line width=1.5]    (25.13,46.95) .. controls (-12.3,54.35) and (15.68,141.29) .. (69.06,54.44) ;
\draw    (54.5,13) -- (47.5,13) ;
\draw [shift={(45.5,13)}, rotate = 360] [color={rgb, 255:red, 0; green, 0; blue, 0 }  ][line width=0.75]    (10.93,-4.9) .. controls (6.95,-2.3) and (3.31,-0.67) .. (0,0) .. controls (3.31,0.67) and (6.95,2.3) .. (10.93,4.9)   ;

\end{tikzpicture}
        \caption{}
        \label{fig:intro-a}
    \end{subfigure}
    \begin{subfigure}[t]{0.3\linewidth}
        \centering
        \tikzset{every picture/.style={line width=0.75pt}} 

\begin{tikzpicture}[x=0.75pt,y=0.75pt,yscale=-.8,xscale=.8]

\clip (0,0) rectangle + (100, 100);

\draw  [color={rgb, 255:red, 208; green, 2; blue, 27 }  ,draw opacity=1 ][line width=1.5]  (49.5,13) .. controls (62.5,12) and (67.5,20) .. (69.5,37) .. controls (71.5,54) and (79.27,44.04) .. (89.5,55) .. controls (99.73,65.96) and (98.5,74) .. (96.5,83) .. controls (94.5,92) and (82.5,100) .. (65.5,91) .. controls (48.5,82) and (55.5,83) .. (40.5,92) .. controls (25.5,101) and (16.5,91) .. (10.5,84) .. controls (4.5,77) and (5.5,62) .. (12.5,57) .. controls (19.5,52) and (30.5,52) .. (32.5,37) .. controls (34.5,22) and (36.5,14) .. (49.5,13) -- cycle ;
\draw  [color={rgb, 255:red, 0; green, 116; blue, 255 }  ,draw opacity=1 ][line width=1.5]  (49.5,44) .. controls (61.5,44) and (75,41) .. (68.5,63) .. controls (62,85) and (45,87) .. (35,67) .. controls (25,47) and (37.5,44) .. (49.5,44) -- cycle ;

\draw (72,5.4) node [anchor=north west][inner sep=0.75pt]    {$\textcolor[rgb]{0.82,0.01,0.11}{X}$};
\draw (70.5,58.4) node [anchor=north west][inner sep=0.75pt]  [color={rgb, 255:red, 0; green, 117; blue, 255 }  ,opacity=1 ]  {$Y$};

\end{tikzpicture}
        \caption{}
        \label{fig:intro-b}
    \end{subfigure}
    \begin{subfigure}[t]{0.35\linewidth}
        \centering
        \tikzset{every picture/.style={line width=0.75pt}} 

\begin{tikzpicture}[x=0.75pt,y=0.75pt,yscale=-.8,xscale=.8]

\clip (0,0) rectangle + (150, 100);

\draw [color={rgb, 255:red, 208; green, 2; blue, 27 }  ,draw opacity=1 ][line width=1.5]    (98.22,13.41) .. controls (107.84,12.45) and (111.54,20.08) .. (113.02,36.28) .. controls (114.5,52.49) and (120.25,43) .. (127.82,53.44) .. controls (135.38,63.88) and (134.47,71.55) .. (132.99,80.13) .. controls (131.51,88.71) and (128.99,89.47) .. (124.14,90.98) ;
\draw [color={rgb, 255:red, 0; green, 116; blue, 255 }  ,draw opacity=1 ][line width=1.5]    (98.22,42.96) .. controls (107.1,42.96) and (117.09,40.1) .. (112.28,61.07) .. controls (107.47,82.03) and (104.49,78.54) .. (99.21,78.03) ;
\draw [color={rgb, 255:red, 208; green, 2; blue, 27 }  ,draw opacity=1 ][line width=1.5]    (98.22,13.41) .. controls (71.82,13.41) and (9,1.67) .. (9,56) .. controls (9,110.33) and (96.96,90.49) .. (124.14,90.98) ;
\draw  [fill={rgb, 255:red, 0; green, 0; blue, 0 }  ,fill opacity=1 ] (26.3,58.45) .. controls (26.3,56.76) and (27.67,55.39) .. (29.35,55.39) .. controls (31.04,55.39) and (32.41,56.76) .. (32.41,58.45) .. controls (32.41,60.13) and (31.04,61.5) .. (29.35,61.5) .. controls (27.67,61.5) and (26.3,60.13) .. (26.3,58.45) -- cycle ;
\draw [color={rgb, 255:red, 208; green, 2; blue, 27 }  ,draw opacity=1 ][line width=1.5]  [dash pattern={on 1.69pt off 2.76pt}]  (124.14,90.98) .. controls (108.4,90.99) and (108.4,81.84) .. (102.3,82.23) .. controls (96.2,82.61) and (91.24,87.94) .. (91.57,88.71) .. controls (91.89,89.47) and (75.23,88.71) .. (69.89,81.84) .. controls (64.55,74.98) and (65.67,60.11) .. (70.85,55.35) .. controls (76.03,50.58) and (84.17,50.58) .. (85.65,36.28) .. controls (87.13,21.99) and (88.28,16.13) .. (95.31,13.97) ;
\draw [color={rgb, 255:red, 0; green, 116; blue, 255 }  ,draw opacity=1 ][line width=1.5]  [dash pattern={on 1.69pt off 2.76pt}]  (100.46,78.03) .. controls (95.91,78.45) and (91.02,70.52) .. (87.43,61.26) .. controls (83.83,51.99) and (87.65,43.7) .. (95.54,43.06) ;
\draw [color={rgb, 255:red, 74; green, 144; blue, 226 }  ,draw opacity=1 ][fill={rgb, 255:red, 255; green, 255; blue, 255 }  ,fill opacity=0.8 ][line width=1.5]    (98.22,42.96) .. controls (71.82,42.96) and (49.78,48.32) .. (49.3,59.99) .. controls (48.82,71.67) and (65.84,72.77) .. (100.46,78.03) ;
\draw  [color={rgb, 255:red, 0; green, 0; blue, 0 }  ,draw opacity=1 ][fill={rgb, 255:red, 255; green, 255; blue, 255 }  ,fill opacity=1 ] (57.92,60.71) .. controls (57.92,59.02) and (59.28,57.66) .. (60.97,57.66) .. controls (62.66,57.66) and (64.02,59.02) .. (64.02,60.71) .. controls (64.02,62.4) and (62.66,63.76) .. (60.97,63.76) .. controls (59.28,63.76) and (57.92,62.4) .. (57.92,60.71) -- cycle ;

\draw (117.65,6.76) node [anchor=north west][inner sep=0.75pt]    {$\textcolor[rgb]{0.82,0.01,0.11}{X}$};
\draw (113.91,63.95) node [anchor=north west][inner sep=0.75pt]  [color={rgb, 255:red, 0; green, 117; blue, 255 }  ,opacity=1 ]  {$Y$};

\end{tikzpicture}
        \caption{}
        \label{fig:intro-c}
    \end{subfigure}
    \caption{The Lee cycle $\ca(K)$ regarded as a dotted cobordism.}
    \label{fig:intro-lee-cycle-cob}    
\end{figure}

First, consider the case $B = \emptyset$ and let $L$ be a link diagram. Let $L^\lout$ denote the \textit{Seifert resolution} of $L$, i.e.\ the diagram obtained by resolving each crossing of $L$ in the orientation-preserving way. Let
\[
    S_L: \varnothing \to L^\lout
\]
be the cobordism consisting of a cup $U$ for each circle $C \subset L^\lout$ with boundary $\del U = C$. The \textit{Lee cycle} $\ca(L)$ is an element of
\[
    \Hom([\emptyset], [L]) \isom \CKh(L),
\]
defined as follows: to each component of $S_L$, add a dot $(\bullet)$ or a hollow dot $(\circ)$, according to the \textit{$XY$-labeling} on the Seifert circles (see \Cref{fig:intro-lee-cycle-cob} for a simple example, and \cite[Section 3]{KimSano:2025} for the precise definition). From the relation $(\bullet \circ) = 0$, it follows that $\ca(L)$ is indeed a cycle, meaning
\[
    d \circ \ca(L) = 0.
\]
The \textit{Lee class} $[\ca(L)]$ is defined as the homotopy class of $\ca(L)$ (or the homology class, if we consider it in $\Kh(L)$), and its \textit{$h$-divisibility}, denoted $d_h(L)$, is defined as the maximal $k \geq 0$ such that 
\[
    \ca(L) \htpy_s h^k z 
        \, :\Leftrightarrow \,
    \exists l \geq 0, h^l \ca(L) \htpy h^{k + l} z
\]
for some cycle $z$. Then the $s$-invariant of $L$ is defined by the quantity
\[
    s(L) = 2 d_h(L) + w(L) - r(L) + 1
\]
where $w(L)$ denotes the writhe of $L$ and $r(L)$ the number of Seifert circles of $L$ (i.e. the number of circles in $L^\lout$). In \cite[Proposition 3.9]{KimSano:2025}, it is proved that $s$ is indeed an invariant of the link, and in particular when $K$ is a knot and $\ca(K)$ is considered over an arbitrary field $F$, the knot invariant $s(K)$ coincides with the Rasmussen invariant $s^F(K)$ of $K$ over $F$. 

\begin{figure}[t]
    \centering
    \input{tikzpictures/intro-lee-decomp}
    \caption{A tangle decomposition of the Lee cycle.}
    \label{fig:intro-lee-decomp}
\end{figure}

The above reformulation of the $s$-invariant allows one to compute $s$ from smaller tangle pieces. Suppose we are given a tangle decomposition of $L$ as
\[
    L = D(T_1, T_2, \ldots, T_d).
\]
Here, each $T_i$ is a tangle diagram and $D$ is a $d$-input planar arc diagram. The Lee cycle for each $T_i$ is defined similarly as a cobordism
\[
    \ca(T_i): [T_i^\lin] \to [T_i^\lout]
\]
where $T_i^\lin$ denotes the crossingless tangle diagram obtained by removing the circle components of $T_i^\lout$. Let $T_0$ be the crossingless unlink diagram given by the composition
\[
    T_0 = D(T_1^\lin, T_2^\lin, \ldots, T_d^\lin). 
\]
Then we may write
\begin{equation}
\label{eqn:lee-cycle-decomp}
    \ca(L) = D(\ca(T_1), \ca(T_2), \ldots, \ca(T_d)) \circ \ca(T_0).    
\end{equation}
See \Cref{fig:intro-lee-decomp} for an example. By applying reductions to the local complexes $[T_i]$, and then to the global composite complex, we may efficiently compute $d_h(L)$, and hence determine $s(L)$. See \cite[Section 3]{KimSano:2025} for the details. 

\subsection{Equivariant Lee class and equivariant Rasmussen invariant}

In this subsection, we focus on \textit{strongly invertible links}. First, we review the definition of the \textit{equivariant Rasmussen invariant} $(\ds(L), \us(L))$ of a strongly invertible link $L$, rephrased in the framework of \cite{KimSano:2025}.

A \textit{strongly invertible link} $(L, \tau)$ is an involutive link such that the involution $\tau$ reverses the orientation, i.e. $\tau(L) = L^r$. A \textit{strongly invertible link diagram} $L$ is an involutive link diagram such that $\tau(L) = L^r$. Let $L$ be a strongly invertible link diagram with Lee cycle $\ca(L) \in \CKh(L)$. From \cite[Proposition 3.3]{Sano:2025b},\footnote{
    \cite[Proposition 3.3]{Sano:2025b} in the published version falsely states that the Lee cycle $\ca(L, o)$ for \textit{any} orientation $o$ of the link is $\tau$-invariant; this holds only for orientations satisfying $\tau(o) = -o$. Thus the claim remains valid for the given orientation of the strongly invertible link. An erratum has been submitted to the journal, and the error has been fixed in the 5th version of the arXiv preprint, where all main theorems remain valid.
}
$\ca(L)$ is $\tau$-invariant, and hence the two copies
\[
    \dca(L) \in \CKhI^0(L),\quad \uca(L) \in \CKhI^1(L)
\]
in the involutive complex $\CKhI(L)$ are cycles, called the \textit{equivariant Lee cycles}. The $h$-divisibilities of those homotopy classes are denoted $\dd_h(L), \ud_h(L)$, and the quantities
\begin{align*}
    \ds(L) &:= 2 \dd_h(L) + w(L) - r(L) + 1,\\
    \us(L) &:= 2 \ud_h(L) + w(L) - r(L) + 1
\end{align*}
are invariants of the strongly invertible link $L$, which are together called the \textit{equivariant Rasmussen invariant} of $L$ (\cite[Definition 3.23]{Sano:2025b}). 

Now, suppose $L$ is a strongly invertible link diagram with a symmetric tangle decomposition
\[
    L = D(T_s, T_a, \tau T_a).
\]
The Lee cycle $\ca(L)$ in $\CKh(L)$ is decomposed as 
\[
    \ca(L) = D(\ca(T_s), \ca(T_a), \tau \ca(T_a)) \circ \ca(T_0)
\]
using the notation of \eqref{eqn:lee-cycle-decomp}. Here, we used $\ca(\tau T_a) = \tau \ca(T_a)$. Suppose we have obtained a reduced complex $\Gamma_0$ as in \Cref{alg:inv-kh-reduction}, so that $\cal{F}(\Gamma_0)$ is a strong deformation retract of the involutive complex $\CKhI(L)$. Let us observe how the equivariant Lee cycles $\dca(L), \uca(L)$ transform under the reduction. 

First, under the $\tau$-equivariant reduction 
\[
    [L] = D([T_s], [T_a], \tau [T_a]) \longrightarrow D(\Omega_s, \Omega_a, \tau \Omega_a) \longrightarrow \Omega_0,
\]
the Lee cycle transforms into 
\[
    \ca(L) \mapsto h^k z
\]
for some $\tau$-invariant cycle $z \in \cal{F}(\Omega_0)$ and $k \geq 0$. The above reduction is strictly $\tau$-equivariant, so after applying the functor $\cal{Q}$ to get $\Gamma = \cal{Q}(\Omega_0)$, we obtain two copies of the cycle $z$,
\[
    \underline{z}, \overline{z} \,\in\, \cal{F}(\Gamma)
\]
as cycles of homological degree $0$ and $1$. Under the subsequent reduction
\[
    \Gamma \longrightarrow \Gamma_0,
\]
the two cycles transform into 
\[
    \underline{z} \mapsto h^{l_0} z_0,\quad 
     \overline{z} \mapsto h^{l_1} z_1
\]
for some cycles $z_0, z_1 \in \cal{F}(\Gamma_0)$ and $l_0, l_1 \geq 0$. Since the above reductions are homotopy equivalences, \cite[Proposition 3.6]{KimSano:2025} gives
\[
    \dd_h(L) = k + l_0 + d_h([z_0]),\quad
    \ud_h(L) = k + l_1 + d_h([z_1]).
\]
Thus it remains to determine the $h$-divisibilities of the homotopy classes $[z_0], [z_1]$ in order to determine the equivariant Rasmussen invariant.

\subsection{Computing $s$ from a linear system}
\label{subsec:compute-s}

When the reduced complex $\Gamma_0$ is simple enough, we may determine the $h$-divisibilities of $z_0, z_1$ by hand, as we shall see in \Cref{sec:pen-paper-computation}. For more complicated knots and links, we pass to algorithmic computations. 

Since $\cal{F}(\Gamma_0)$ is an ordinary chain complex over $R = \FF_2[h]$ (which is a PID), we may compute its homology and extract the $h$-divisibilities (modulo torsion) of the homology classes $[z_0], [z_1]$, and determine $(\ds, \us)$. However, when $L$ is large, this process could be computationally infeasible. Here, we give an alternative method based on $\FF_2$-linear systems, which is similar to the method given in \cite{SS:2020}. To make the arguments clear, we proceed in an abstract setting. 

Let $\FF$ be a field, and $R := \FF[h]$ with $\deg(h) = -2$. Let $C$ be a bounded complex of finitely generated free $R$-modules with distinguished $R$-basis $G^i$ of $C^i$, together with a secondary degree function $q \colon G^i \to \ZZ$. Further assume that the differential $d$ of $C$ has bidegree $(1, 0)$, and that all $x \in G$ have $q(x) \equiv q_0 \pmod{2}$ for some fixed $q_0 \in \ZZ$. We write $C^{i,j} \subset C^i$ for the homogeneous part as an $\FF$-vector space, namely
\[
    C^{i, j} := \operatorname{span}_{\FF} \{\ h^k x \mid x \in G^i,\ k \ge 0,\ q(x) - 2k = j \ \}.
\]
Set $\bar{C} := C / (h - 1) C$, and let $\pi \colon C \to \bar{C}$ be the projection. For each $x \in C$, its image under $\pi$ is denoted $\bar{x} = \pi(x)$, and $\bar{G}^i := \pi G^i$ form an $\FF$-basis of $\bar{C}^i$. For $j \in \ZZ$, set
\[
    F^j \bar{C}^i := \operatorname{span}_{\FF} \{\ \bar{x} \mid x \in G^i,\ q(x) \geq j \ \}
\]
which gives a descending filtration on $\bar{C}$. 

\begin{lemma}[Rees correspondence]
\label{lem:rees-corresp}
    Let $j \in \ZZ$ with $j \equiv q_0 \pmod{2}$. Then $C^{i, j}$ and $F^j \bar{C}^i$ are isomorphic as $\FF$-vector spaces, with an isomorphism given by $\pi|_{C^{i, j}}$ and its inverse by
    \[
        L^j\colon F^j \bar{C}^i \to C^{i, j},\quad \bar{x} \mapsto h^{(q(x) - j)/2} x.
    \]
    More strongly, $C^{*, j}$ and $F^j \bar{C}^*$ are isomorphic as $\FF$-chain complexes. 
\end{lemma}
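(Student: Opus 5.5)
We first describe the $\FF$-basis of $C^{i,j}$ explicitly and check that $\pi$ sends it to a basis of $F^j\bar{C}^i$; this gives the vector space isomorphism. Then we check that $\pi$ commutes with the differentials, which gives the chain isomorphism.

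\emph{Basis of $C^{i,j}$.} By definition, $C^{i,j}$ has $\FF$-basis the elements $h^k x$ with $x \in G^i$, $k \geq 0$ and $q(x) - 2k = j$. Since $j \equiv q_0 \equiv q(x) \pmod 2$, for each $x \in G^i$ there is at most one such $k$, namely $k = (q(x)-j)/2$. This $k$ is admissible exactly when $q(x) \geq j$. So the basis is
\[
    \{\, h^{(q(x)-j)/2} x \mid x \in G^i,\ q(x) \geq j \,\}.
\]

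\emph{Image under $\pi$.} We have $\pi(h^k x) = \bar{x}$, because $h$ acts as $1$ on $\bar{C}$. Hence $\pi|_{C^{i,j}}$ sends this basis bijectively onto $\{\bar{x} \mid x \in G^i,\ q(x)\ge j\}$. This set is a subset of the $\FF$-basis $\bar{G}^i$ of $\bar{C}^i$, since $C^i$ is free on $G^i$. It is therefore linearly independent and spans $F^j\bar{C}^i$ by definition. So $\pi|_{C^{i,j}}$ is an isomorphism onto $F^j\bar{C}^i$, and its inverse is visibly $L^j$ on basis elements.

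\emph{Chain isomorphism.} Since $d$ is $R$-linear, it descends to $\bar{d}$ on $\bar{C}$ with $\pi d = \bar{d}\pi$. Since $d$ has bidegree $(1,0)$, it maps $C^{i,j}$ into $C^{i+1,j}$, so $C^{*,j}$ is a subcomplex over $\FF$. Then $\pi|_{C^{*,j}}$ is a chain map and a degreewise isomorphism onto $F^j\bar{C}^*$, and its inverse $L^j$ is automatically a chain map.

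The one point needing care is the first step: that $d$ genuinely preserves the $\FF$-subspaces $C^{*,j}$. This amounts to writing $d(x) = \sum_y c_{xy} h^{m_{xy}} y$ and noting that bidegree $(1,0)$ forces $q(y) - 2m_{xy} = q(x)$ for every nonzero term. This also shows directly that $\bar{d}$ preserves the filtration, since $q(y) \geq q(x)$ whenever $c_{xy} \neq 0$, so $F^j\bar{C}^*$ is indeed a subcomplex and $\pi(C^{*,j}) \subseteq F^j\bar{C}^*$ is consistent.
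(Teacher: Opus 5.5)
Your argument is correct and complete. The basis $\{h^{(q(x)-j)/2}x : x\in G^i,\ q(x)\ge j\}$ of $C^{i,j}$ is carried by $\pi$ bijectively onto the subset $\{\bar{x} : q(x)\ge j\}$ of the basis $\bar{G}^i$. Then $\pi d=\bar{d}\pi$ together with $d(C^{i,j})\subseteq C^{i+1,j}$ upgrades this to a chain isomorphism, which also shows that $F^j\bar{C}^*$ is a subcomplex. The paper states this lemma without proof, treating it as immediate, and your basis-level verification is the routine check it leaves to the reader.
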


Let $H$ denote the homology of $C$. For a homology class $[z] \in H$, define its \textit{$h$-divisibility} (modulo torsion) by
\[
    d_h([z]) := \max \{\ k \geq 0 \mid [[z]] \in h^k (H / \Tor(H)) \ \}
\]
where $[[z]] := [z] + \Tor(H)$. (When $[z]$ is torsion, we put $d_h([z]) = \infty$.) From the structure theorem of finitely generated graded modules over the graded PID $R = \FF[h]$, torsion summands of $H$ are of the form $R/(h^l)$ with $l \geq 1$, so the above definition can be rewritten as 
\[
    d_h([z]) = \max \{\ k \geq 0 \mid \exists l \geq 0,\ h^l [z] \in h^{k + l} H \ \}. 
\]

\begin{proposition}
    Let $z \in C$ be a homogeneous cycle of bidegree $(i, j)$. The following are equivalent: 
    \begin{enumerate}
        \item $d_h([z]) \geq k$.
        \item There exists $\bar{w} \in \bar{C}^{i - 1}$ such that $\bar{z} + d\bar{w} \in F^{j + 2k} \bar{C}^i$.
    \end{enumerate}
\end{proposition}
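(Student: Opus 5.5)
The plan is to translate both conditions into statements about the graded complex $C$ and then pass through the Rees correspondence (\Cref{lem:rees-corresp}). The key intermediate claim is that (1) is equivalent to the following condition: \emph{there exist $l \geq 0$, a homogeneous cycle $y$ of bidegree $(i, j + 2k + 2l)$ and an element $w$ of bidegree $(i-1, j+2l)$ with $h^l z = h^{k+l} y + dw$.} Indeed, the reformulated definition of $d_h$ states that $h^l[z] \in h^{k+l}H$ for some $l$. Since $H$ is bigraded and $h$ is homogeneous, we may take the class and its representative homogeneous; this gives exactly the displayed relation.

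For (1)$\Rightarrow$(2), I would apply $\pi$ to this relation. Since $\pi(h) = 1$, we get $\bar z = \bar y + d\bar w$. Writing $y$ in the basis $G^i$, each basis element $x$ appears with coefficient $h^m$, where $q(x) - 2m = j + 2k + 2l$. Hence $q(x) \geq j + 2k$, so $\bar y \in F^{j+2k}\bar C^i$. Then $\bar z - d\bar w = \bar y$ gives (2); over $\FF_2$, or after replacing $\bar w$ by $-\bar w$, the sign is irrelevant.

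For (2)$\Rightarrow$(1), I would lift using the Rees maps. The element $\bar z + d\bar w$ lies in $F^{j+2k}\bar C^i$. Since $\pi$ restricts to a chain isomorphism $C^{*,j+2k} \cong F^{j+2k}\bar C^*$, it is a cycle there. Set $y := L^{j+2k}(\bar z + d\bar w)$; this is a cycle of bidegree $(i, j+2k)$. The complication is that $\bar w$ need not lie in $F^j\bar C^{i-1}$. So I would choose $m \geq 0$ large enough that $\bar w \in F^{j-2m}\bar C^{i-1}$, which is possible because the complex has finitely many generators. Then I would work in $C^{*,j-2m}$, using the fact that $L^{j-2m}$ is a chain map. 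We have $L^{j-2m}(\bar z) = h^m z$, because $z$ is homogeneous of degree $j$: each $h^a x$ term in $z$ has $q(x) - 2a = j$, so it is sent to $h^{a+m}x$. Similarly, $L^{j-2m}(\bar z + d\bar w) = h^{k+m} y$. Applying $L^{j-2m}$, which is additive and commutes with $d$, gives
\[
    h^m z + d\, L^{j-2m}(\bar w) = h^{k+m} y.
\]
Hence $h^m[z] = h^{k+m}[y] \in h^{k+m}H$, so $d_h([z]) \geq k$.

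The step I expect to need the most care is this last compatibility check. I must verify that $L^{j'}\circ\pi$ equals multiplication by $h^{(j-j')/2}$ on homogeneous elements of degree $j \geq j'$, and that the parity assumption $q(x) \equiv q_0$ makes all exponents integers. A second point to check is the homogeneity reduction in the first step: why a homogeneous representative $y$ can be chosen. For this, I would project the relation $h^l z = h^{k+l}y + dw$ onto its bidegree $(i, j+2l)$ component. This projection is compatible with $d$ and with multiplication by $h$, so it produces the homogeneous $y$ and $w$ required.
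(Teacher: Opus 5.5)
Your proposal is correct and follows the paper's proof essentially step for step. For (1)$\Rightarrow$(2) you apply $\pi$ to a chain-level relation $h^l z + dw = h^{k+l}y$. For (2)$\Rightarrow$(1) you pick $j_0 = j-2m$ with $\bar{w} \in F^{j_0}\bar{C}^{i-1}$ and apply the chain map $L^{j_0}$, using $L^{j_0}(\bar{z}) = h^m z$ and $L^{j_0}(\bar{z}+d\bar{w}) = h^{k+m}L^{j+2k}(\bar{z}+d\bar{w})$.

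The one flaw is a degree slip in the first direction. Since $\deg h = -2$, $y$ has bidegree $(i, j+2k)$, as you find in the converse, and $w$ has bidegree $(i-1, j-2l)$. You should therefore project onto the $(i, j-2l)$ component. With this correction, $q(x) = j+2k+2m \geq j+2k$ and the argument goes through unchanged.
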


\begin{proof}
    Put $j' := j + 2k$. 

    \vspace{.5em} \noindent
    \textbf{(1) $\Rightarrow$ (2)}. Write $h^l [z] = h^{k + l}[c]$ for some homogeneous cycle $c \in C^{i, j'}$ and some $l \geq 0$. On the level of chains, there is some $w \in C^{i - 1, j - 2l}$ such that $h^l z + dw = h^{k + l} c$. Apply $\pi$ and we get 
    \[
        \bar{z} + d\bar{w} = \bar{c}.
    \] 
    Here, $\bar{w} \in \bar{C}^{i - 1}$ and $\bar{c} \in F^{j'} \bar{C}^i$ as desired. 

    \vspace{.5em} \noindent
    \textbf{(2) $\Rightarrow$ (1)}. Put $\bar{c} := \bar{z} + d\bar{w} \in F^{j'} \bar{C}^i$. Take $j_0 \leq j$ with $j_0 \equiv q_0 \pmod{2}$ so that $\bar{w} \in F^{j_0}\bar{C}^{i - 1}$, and apply $L^{j_0}$ to get
    \[
        L^{j_0}(\bar{c}) = L^{j_0}(\bar{z}) + d L^{j_0}(\bar{w}) 
        \,\in\, C^{i, j_0}. 
    \]
    With $l := \frac{j - j_0}{2}$ and $c := L^{j'}(\bar{c})$, we have $L^{j_0}(\bar{c}) = h^{k + l} c$ and $L^{j_0}(\bar{z}) = h^l z$. Passing to homology, we get 
    \[
        h^l [z] = h^{k + l}[c]
    \] 
    which implies $d_h([z]) \geq k$. 
\end{proof}

\begin{corollary}
\label{cor:h-div-by-linear-sys}
    Let $z \in C$ be a homogeneous cycle of bidegree $(i, j)$. $d_h([z])$ can be computed as the maximal $k \geq 0$ such that the $\FF$-linear system 
    \[
        d \bar{w} = \bar{z}
    \]
    in $\bar{C}^i / F^{j + 2k} \bar{C}^i$ has a solution $\bar{w}$.
\end{corollary}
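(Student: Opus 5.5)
The corollary follows from the preceding proposition by rephrasing its condition (2) as a solvability statement and noting that the set of admissible $k$ is downward closed. Fix a homogeneous cycle $z$ of bidegree $(i,j)$ and write $p_k\colon \bar{C}^i \to \bar{C}^i / F^{j+2k}\bar{C}^i$ for the quotient map.

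\textbf{Step 1 (translation).} The condition ``there exists $\bar{w}$ with $\bar{z} + d\bar{w} \in F^{j+2k}\bar{C}^i$'' says exactly that $p_k(\bar{z} + d\bar{w}) = 0$. Up to replacing $\bar{w}$ by $-\bar{w}$ (harmless, and invisible over $\FF_2$), this is the equation
\[
    p_k(d\bar{w}) = p_k(\bar{z}).
\]
This is the linear system $d\bar{w} = \bar{z}$ in $\bar{C}^i / F^{j+2k}\bar{C}^i$. It is $\FF$-linear, because $d$ and $p_k$ are $\FF$-linear. By the preceding proposition, it is solvable if and only if $d_h([z]) \geq k$.

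\textbf{Step 2 (monotonicity).} Since the filtration is descending, $F^{j+2k+2}\bar{C}^i \subset F^{j+2k}\bar{C}^i$. Hence a solution of the system at level $k+1$ projects to a solution at level $k$, so the set of $k$ for which the system is solvable is downward closed. It contains $k = 0$: because $z$ is homogeneous of degree $j$, every basis element in its expansion has $q$-degree at least $j$, so $\bar{z} \in F^j\bar{C}^i$ and $\bar{w} = 0$ works. Combined with Step 1, this set equals $\{k \geq 0 \mid k \leq d_h([z])\}$, whose maximum is $d_h([z])$.

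\textbf{Step 3 (finiteness).} When $[z]$ is torsion, $d_h([z]) = \infty$, and the system is then solvable for every $k$. Once $j + 2k$ exceeds the maximal $q$-degree of $\bar{G}^i$, the quotient is zero and the system is trivially solvable, so in practice one tests $k$ up to that bound and reads off $\infty$ if every test succeeds. This matches the torsion convention, so the statement holds with ``maximal'' interpreted as $\infty$ in that case.

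The main obstacle is already handled by the proposition. The only points needing care are the sign (irrelevant in characteristic $2$) and confirming $\bar{z} \in F^j\bar{C}^i$ from homogeneity, which ensures the level-$0$ case holds.
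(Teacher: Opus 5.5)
Your Steps 1 and 2 are correct and match how the paper treats the corollary. The paper gives it no separate proof: it is an immediate rephrasing of condition (2) of the preceding proposition, with $\bar{w} \mapsto -\bar{w}$. Your homogeneity check that $\bar{z} \in F^j\bar{C}^i$ is also right. The downward-closedness argument in Step 2 is redundant, since Step 1 already identifies the solvable $k$ with $\{k \mid d_h([z]) \geq k\}$.

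Step 3, however, has the filtration backwards. $F^{j}\bar{C}^i$ is spanned by the $\bar{x}$ with $q(x) \geq j$, so it shrinks as $j$ grows. Let $q_{\mathrm{max}}$ be the maximal $q$-degree of elements of $G^i$. Once $j+2k > q_{\mathrm{max}}$, one has $F^{j+2k}\bar{C}^i = 0$, and the quotient $\bar{C}^i/F^{j+2k}\bar{C}^i$ is all of $\bar{C}^i$, not zero. The quotient vanishes only when $j+2k$ is below the minimal $q$-degree. Taken literally, your claim would make the system solvable for all large $k$ for every cycle. By Step 1, every class would then have $d_h = \infty$, which contradicts the statement you are proving.

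The correct termination argument runs as follows. For every $k$ with $j+2k > q_{\mathrm{max}}$, the system is the same unfiltered equation $d\bar{w} = \bar{z}$ in $\bar{C}^i$. Solvability of this single equation is therefore equivalent to solvability for all $k$, i.e.\ to $d_h([z]) = \infty$ ($[z]$ torsion). Otherwise the maximum is attained at some $k$ with $j+2k \leq q_{\mathrm{max}}$. So your algorithmic conclusion is right: test up to the bound, and read off $\infty$ if every test succeeds. The reason, though, is that the last test is the genuine boundary problem in $\bar{C}^i$, not a vacuous one.
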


Back to the case of the equivariant Lee classes, recall that $z_i$ for $i = 0, 1$ is a homogeneous cycle of $\cal{F}(\Gamma_0)$ of homological degree $i$, and let $j_i$ denote its $q$-degree. Setting $h = 1$, i.e.\ passing to $\bar{z}_i := \pi(z_i)$, \Cref{cor:h-div-by-linear-sys} computes $d_h([z_i])$ as the maximal $k \geq 0$ such that the $\FF_2$-linear system
\[
    d \bar{w} = \bar{z}_i
\]
in $\bar{C}^i / F^{j_i + 2k} \bar{C}^i$ has a solution $\bar{w}$. This is far more efficient than directly computing the homology of $\cal{F}(\Gamma_0)$ over $\FF_2[h]$. This method can also be applied to computing the ordinary Rasmussen invariant over an arbitrary field $F$. 

The program \verb|yui| \cite{Sano:YUI} has been improved so that it now computes the involutive Khovanov homology using \Cref{alg:inv-kh-reduction}, and the equivariant Rasmussen invariant using the algorithm described in this section.
    \section{Diagrammatic computation}
\label{sec:pen-paper-computation}

Finally, we prove \Cref{thm:pretzel-equiv-s}, demonstrating how the procedure described in \Cref{subsec:reduce-procedure} can be applied to compute the equivariant Rasmussen invariants by hand. We refer to the proof of \cite[Theorem 3]{KimSano:2025} for a similar computation in the non-equivariant setting. 

\begin{restate-theorem}[thm:pretzel-equiv-s]
    For any odd positive integer $p \geq 3$, the strongly invertible pretzel knot $K = P(-p, p, -p)$ has
    \[
        (\ds(K), \us(K)) = (0, 2).
    \]
\end{restate-theorem}

First, we recall some notation introduced in \cite{KimSano:2025}. For any $q \in \ZZ \setminus \{0\}$, let $T_q$ denote the (unoriented) tangle diagram obtained by adding $|q|$ half-twists to a pair of crossingless vertical strands, positive or negative depending on the sign of $q$. 
\begin{center}
    \tikzset{every picture/.style={line width=0.75pt}} 

\begin{tikzpicture}[x=0.75pt,y=0.75pt,yscale=-1,xscale=1]

\begin{knot}[
  clip width=3pt,
  end tolerance=1pt,
]

\strand [line width=1.5]    (91.05,10.83) .. controls (90.61,25.83) and (71.06,24.83) .. (71.33,40.16) .. controls (71.61,55.49) and (91.33,56.16) .. (91.33,70.83) .. controls (91.33,85.49) and (71.33,84.83) .. (70.67,100.16) ;
\strand [line width=1.5]    (72.24,10.16) .. controls (72.64,25.16) and (91.59,24.83) .. (91.33,40.16) .. controls (91.08,55.49) and (71.98,55.49) .. (71.98,70.16) .. controls (71.98,84.83) and (90.05,85.49) .. (90.67,100.83) ;


\end{knot}

\draw  [color={rgb, 255:red, 255; green, 255; blue, 255 }  ,draw opacity=1 ][fill={rgb, 255:red, 255; green, 255; blue, 255 }  ,fill opacity=1 ] (67,37.17) -- (95,37.17) -- (95,70.83) -- (67,70.83) -- cycle ;
\draw  [color={rgb, 255:red, 128; green, 128; blue, 128 }  ,draw opacity=1 ] (104,93) .. controls (108.67,93) and (111,90.67) .. (111,86) -- (111,65.73) .. controls (111,59.06) and (113.33,55.73) .. (118,55.73) .. controls (113.33,55.73) and (111,52.4) .. (111,45.73)(111,48.73) -- (111,25.45) .. controls (111,20.78) and (108.67,18.45) .. (104,18.45) ;

\draw (9,44.4) node [anchor=north west][inner sep=0.75pt]    {$T_{q} \ =$};
\draw (72.67,44.57) node [anchor=north west][inner sep=0.75pt]    {$\vdots $};
\draw (125,44.4) node [anchor=north west][inner sep=0.75pt]    {$q$};

\end{tikzpicture}
\end{center}
Let $T^\pm_q$ denote the tangle diagram $T_q$ equipped with the orientation given so that each crossing has the sign indicated by the superscript. Let $\sf{E}_0$ and $\sf{E}_1$ denote the following unoriented $4$-end crossingless tangle diagrams:
\begin{center}
    \tikzset{every picture/.style={line width=0.75pt}} 

\begin{tikzpicture}[x=0.75pt,y=0.75pt,yscale=-.75,xscale=.75]

\draw  [dash pattern={on 4.5pt off 4.5pt}] (206,34.77) .. controls (206,20.71) and (217.4,9.31) .. (231.46,9.31) .. controls (245.52,9.31) and (256.91,20.71) .. (256.91,34.77) .. controls (256.91,48.83) and (245.52,60.22) .. (231.46,60.22) .. controls (217.4,60.22) and (206,48.83) .. (206,34.77) -- cycle ;
\draw  [draw opacity=0][line width=1.5]  (251.56,20.25) .. controls (246.19,23.83) and (239.17,26) .. (231.48,26) .. controls (223.83,26) and (216.84,23.85) .. (211.48,20.3) -- (231.48,2.73) -- cycle ; \draw  [line width=1.5]  (251.56,20.25) .. controls (246.19,23.83) and (239.17,26) .. (231.48,26) .. controls (223.83,26) and (216.84,23.85) .. (211.48,20.3) ;  
\draw  [draw opacity=0][line width=1.5]  (211.4,49.29) .. controls (216.77,45.71) and (223.79,43.54) .. (231.48,43.54) .. controls (239.13,43.54) and (246.12,45.69) .. (251.48,49.24) -- (231.48,66.81) -- cycle ; \draw  [line width=1.5]  (211.4,49.29) .. controls (216.77,45.71) and (223.79,43.54) .. (231.48,43.54) .. controls (239.13,43.54) and (246.12,45.69) .. (251.48,49.24) ;  

\draw  [dash pattern={on 4.5pt off 4.5pt}] (86.48,9.29) .. controls (100.54,9.29) and (111.94,20.69) .. (111.94,34.75) .. controls (111.94,48.81) and (100.54,60.21) .. (86.48,60.21) .. controls (72.42,60.21) and (61.02,48.81) .. (61.02,34.75) .. controls (61.02,20.69) and (72.42,9.29) .. (86.48,9.29) -- cycle ;
\draw  [draw opacity=0][line width=1.5]  (101,54.85) .. controls (97.42,49.48) and (95.25,42.46) .. (95.25,34.77) .. controls (95.25,27.12) and (97.4,20.13) .. (100.95,14.77) -- (118.52,34.77) -- cycle ; \draw  [line width=1.5]  (101,54.85) .. controls (97.42,49.48) and (95.25,42.46) .. (95.25,34.77) .. controls (95.25,27.12) and (97.4,20.13) .. (100.95,14.77) ;  
\draw  [draw opacity=0][line width=1.5]  (71.96,14.69) .. controls (75.54,20.06) and (77.71,27.08) .. (77.71,34.77) .. controls (77.71,42.42) and (75.56,49.41) .. (72.01,54.77) -- (54.43,34.77) -- cycle ; \draw  [line width=1.5]  (71.96,14.69) .. controls (75.54,20.06) and (77.71,27.08) .. (77.71,34.77) .. controls (77.71,42.42) and (75.56,49.41) .. (72.01,54.77) ;

\draw (7,22.4) node [anchor=north west][inner sep=0.75pt]    {$\mathsf{E}_{0} \ =\ $};
\draw (149,23.4) node [anchor=north west][inner sep=0.75pt]    {$\mathsf{E}_{1} \ =\ $};
\draw (121,24) node [anchor=north west][inner sep=0.75pt]   [align=left] {,};

\end{tikzpicture}
\end{center}
Consider two $q$-degree $-2$ endomorphisms $a, b$ on $\sf{E}_1$ defined by 
\begin{align*}
    a &= u_X + l_Y = u_Y + l_X, \\
    b &= u_X - l_X = u_Y - l_Y
\end{align*}
where $u_X, l_X$ are endomorphisms defined as
\begin{center}
    \tikzset{every picture/.style={line width=0.75pt}} 

\begin{tikzpicture}[x=0.75pt,y=0.75pt,yscale=-.8,xscale=.8]

\draw  [dash pattern={on 4.5pt off 4.5pt}] (69,32.27) .. controls (69,18.21) and (80.4,6.81) .. (94.46,6.81) .. controls (108.52,6.81) and (119.91,18.21) .. (119.91,32.27) .. controls (119.91,46.33) and (108.52,57.72) .. (94.46,57.72) .. controls (80.4,57.72) and (69,46.33) .. (69,32.27) -- cycle ;
\draw  [draw opacity=0][line width=1.5]  (114.56,17.75) .. controls (109.19,21.33) and (102.17,23.5) .. (94.48,23.5) .. controls (86.83,23.5) and (79.84,21.35) .. (74.48,17.8) -- (94.48,0.23) -- cycle ; \draw  [line width=1.5]  (114.56,17.75) .. controls (109.19,21.33) and (102.17,23.5) .. (94.48,23.5) .. controls (86.83,23.5) and (79.84,21.35) .. (74.48,17.8) ;  
\draw  [draw opacity=0][line width=1.5]  (74.4,46.79) .. controls (79.77,43.21) and (86.79,41.04) .. (94.48,41.04) .. controls (102.13,41.04) and (109.12,43.19) .. (114.48,46.74) -- (94.48,64.31) -- cycle ; \draw  [line width=1.5]  (74.4,46.79) .. controls (79.77,43.21) and (86.79,41.04) .. (94.48,41.04) .. controls (102.13,41.04) and (109.12,43.19) .. (114.48,46.74) ;  

\draw  [fill={rgb, 255:red, 0; green, 0; blue, 0 }  ,fill opacity=1 ] (91,23.27) .. controls (91,21.5) and (92.43,20.07) .. (94.2,20.07) .. controls (95.97,20.07) and (97.41,21.5) .. (97.41,23.27) .. controls (97.41,25.04) and (95.97,26.47) .. (94.2,26.47) .. controls (92.43,26.47) and (91,25.04) .. (91,23.27) -- cycle ;
\draw  [dash pattern={on 4.5pt off 4.5pt}] (209,31.27) .. controls (209,17.21) and (220.4,5.81) .. (234.46,5.81) .. controls (248.52,5.81) and (259.91,17.21) .. (259.91,31.27) .. controls (259.91,45.33) and (248.52,56.72) .. (234.46,56.72) .. controls (220.4,56.72) and (209,45.33) .. (209,31.27) -- cycle ;
\draw  [draw opacity=0][line width=1.5]  (254.56,16.75) .. controls (249.19,20.33) and (242.17,22.5) .. (234.48,22.5) .. controls (226.83,22.5) and (219.84,20.35) .. (214.48,16.8) -- (234.48,-0.77) -- cycle ; \draw  [line width=1.5]  (254.56,16.75) .. controls (249.19,20.33) and (242.17,22.5) .. (234.48,22.5) .. controls (226.83,22.5) and (219.84,20.35) .. (214.48,16.8) ;  
\draw  [draw opacity=0][line width=1.5]  (214.4,45.79) .. controls (219.77,42.21) and (226.79,40.04) .. (234.48,40.04) .. controls (242.13,40.04) and (249.12,42.19) .. (254.48,45.74) -- (234.48,63.31) -- cycle ; \draw  [line width=1.5]  (214.4,45.79) .. controls (219.77,42.21) and (226.79,40.04) .. (234.48,40.04) .. controls (242.13,40.04) and (249.12,42.19) .. (254.48,45.74) ;  

\draw  [fill={rgb, 255:red, 0; green, 0; blue, 0 }  ,fill opacity=1 ] (231.25,40.47) .. controls (231.25,38.7) and (232.69,37.27) .. (234.46,37.27) .. controls (236.23,37.27) and (237.66,38.7) .. (237.66,40.47) .. controls (237.66,42.24) and (236.23,43.68) .. (234.46,43.68) .. controls (232.69,43.68) and (231.25,42.24) .. (231.25,40.47) -- cycle ;

\draw (17,24.67) node [anchor=north west][inner sep=0.75pt]    {$u_{X} \ =\ $};
\draw (127,25.77) node [anchor=north west][inner sep=0.75pt]   [align=left] {,};
\draw (157,23.67) node [anchor=north west][inner sep=0.75pt]    {$l_{X} \ =\ $};

\end{tikzpicture}
\end{center}
and $u_Y, l_Y$ similarly using $\circ$. 

Now, we proceed to the proof of \Cref{thm:pretzel-equiv-s}, with $p = 3$ for simplicity. The pretzel knot $K = P(-3, 3, -3)$ has an obvious symmetric tangle decomposition
\[
    K = D(T^+_{-3}, T^-_{3}, T^+_{-3}).
\]
From \cite[Proposition 4.3]{KimSano:2025}, the $\tau$-complex $[K]$ reduces to 
\[
    [K] \to D(E^+_{-3}, E^-_{3}, E^+_{-3}) =: \Omega,
\]
where $E^+_{-3}, E^-_{3}$ are sequential complexes of the form
\[
\begin{tikzcd}[row sep=.5em]
    E^+_{-3} \arrow[r, "=", phantom] & \{\ \underline{\sf{E}_1} \arrow[r, "a"] & \sf{E}_1 \arrow[r, "b"] & \sf{E}_1 \arrow[r, "s"] & \sf{E}_0\ \}, \\
    E^-_{3} \arrow[r, "=", phantom]  & \{\ \sf{E}_0 \arrow[r, "s"] & \sf{E}_1 \arrow[r, "b"] & \sf{E}_1 \arrow[r, "a"] & \underline{\sf{E}_1}\ \}.
\end{tikzcd}    
\]
The underlined objects indicate the homological grading $0$ part. Further, from \cite[Proposition 4.4]{KimSano:2025}, the Lee cycle $\ca(K)$ is transformed under this retraction to 
\[
    \ca(K) \mapsto D(z^+_{-3}, h^2 z^-_{3}, z^+_{-3}) = h^2 z,
\]
where $z^\pm_q$ denote the cycles given therein. Here, we put $z = D(z^+_{-3}, z^-_{3}, z^+_{-3})$. In the non-equivariant case, this shows $d_h(K) \geq 2$, and from $w(K) = 3$ and $r(K) = 8$, we have 
\[
    s(K) = 2d_h(K) + w(K) - r(K) + 1 \geq 0.
\]
However, since $K$ is slice, this is an equality, and hence the $h$-divisibility of $\ca(K)$ is exactly $2$. 

Now, pass to the involutive complexes, and consider the equivariant Lee cycles $\dca(K), \uca(K)$ and the two copies $\underline{z}, \bar{z}$ of the cycle $z$. The above $\tau$-equivariant reduction induces a reduction
\[
    \cal{Q}([K]) \longrightarrow \cal{Q}(\Omega)
\]
under which the equivariant Lee cycles transform as
\[
    \dca(K) \mapsto h^2 \underline{z},\quad \uca(K) \mapsto h^2 \bar{z}.
\]
This gives $\dd_h(K),\ \ud_h(K) \geq 2$ and $\ds(K),\ \us(K) \geq 0$. With the inequality $\ds(K) \leq s(K)$ of \cite[Corollary 3.25]{Sano:2025b} and $s(K) = 0$, we obtain $\ds(K) = 0$. To prove \Cref{thm:pretzel-equiv-s} for $p = 3$, it remains to show that $\uca(K)$ has $h$-divisibility exactly $3$, or equivalently, that $\bar{z}$ has $h$-divisibility exactly $1$.

\begin{figure}[p]
    \centering
    \vspace{-5em}
    \input{tikzpictures/m9_46-diagrams}
    \caption{Resolved diagrams}
    \label{fig:m9_46-diag}
    \vspace{2em}
    \input{tikzpictures/m9_46-cpx}
    \caption{Structure of $\Omega$}
    \label{fig:m9_46-cpx}
    \vspace{2em}
    \tikzset{every picture/.style={line width=0.75pt}} 

\begin{tikzpicture}[x=0.75pt,y=0.75pt,yscale=-.8,xscale=.8]

\draw [color={rgb, 255:red, 155; green, 155; blue, 155 }  ,draw opacity=1 ][line width=0.75] (233,51.32) -- (311,297.16) ;
\draw [color={rgb, 255:red, 155; green, 155; blue, 155 }  ,draw opacity=1 ]   (240.33,197.6) -- (311,297.16) ;
\draw [color={rgb, 255:red, 155; green, 155; blue, 155 }  ,draw opacity=1 ]   (244,271.16) -- (309,341.71) ;
\draw [color={rgb, 255:red, 155; green, 155; blue, 155 }  ,draw opacity=1 ][line width=0.75] (233,51.32) -- (309,341.71) ;
\draw [color={rgb, 255:red, 155; green, 155; blue, 155 }  ,draw opacity=1 ][line width=0.75]    (81,51.32) -- (155.67,197.6) ;
\draw [color={rgb, 255:red, 155; green, 155; blue, 155 }  ,draw opacity=1 ][line width=0.75]    (81,51.32) -- (155,267.04) ;
\draw [color={rgb, 255:red, 208; green, 2; blue, 27 }  ,draw opacity=1 ][line width=1.5]    (81,51.32) -- (153,51.32) ;

\draw (34.33,15) node [anchor=north west][inner sep=0.75pt]   [align=left] {\underline{-1}};
\draw (185.67,15) node [anchor=north west][inner sep=0.75pt]   [align=left] {\underline{0}};
\draw (336.33,15) node [anchor=north west][inner sep=0.75pt]   [align=left] {\underline{1}};
\draw (44.89,44.86) node [anchor=north] [inner sep=0.75pt]  [font=\scriptsize,color={rgb, 255:red, 0; green, 0; blue, 0 }  ,opacity=1 ]  {$\sf{A}(0,-1,0)$};
\draw (44.89,69.31) node [anchor=north] [inner sep=0.75pt]  [font=\scriptsize,color={rgb, 255:red, 155; green, 155; blue, 155 }  ,opacity=1 ]  {$\sf{A}(1,-2,0)$};
\draw (44.89,93.76) node [anchor=north] [inner sep=0.75pt]  [font=\scriptsize,color={rgb, 255:red, 155; green, 155; blue, 155 }  ,opacity=1 ]  {$\sf{A}(0,-2,1)$};
\draw (44.39,118.21) node [anchor=north] [inner sep=0.75pt]  [font=\scriptsize,color={rgb, 255:red, 155; green, 155; blue, 155 }  ,opacity=1 ]  {$\sf{B}(2,-3,0)$};
\draw (44.39,142.66) node [anchor=north] [inner sep=0.75pt]  [font=\scriptsize,color={rgb, 255:red, 155; green, 155; blue, 155 }  ,opacity=1 ]  {$\sf{B}(1,-3,1)$};
\draw (44.39,167.1) node [anchor=north] [inner sep=0.75pt]  [font=\scriptsize,color={rgb, 255:red, 155; green, 155; blue, 155 }  ,opacity=1 ]  {$\sf{B}(0,-3,2)$};
\draw (191.23,44.6) node [anchor=north] [inner sep=0.75pt]  [font=\scriptsize]  {$\underline{\sf{A}(0,0,0)}$};
\draw (194.73,69.41) node [anchor=north] [inner sep=0.75pt]  [font=\scriptsize,color={rgb, 255:red, 155; green, 155; blue, 155 }  ,opacity=1 ]  {$\sf{A}(1,-1,0)$};
\draw (194.73,93.97) node [anchor=north] [inner sep=0.75pt]  [font=\scriptsize,color={rgb, 255:red, 155; green, 155; blue, 155 }  ,opacity=1 ]  {$\sf{A}(0,-1,1)$};
\draw (194.73,118.53) node [anchor=north] [inner sep=0.75pt]  [font=\scriptsize,color={rgb, 255:red, 155; green, 155; blue, 155 }  ,opacity=1 ]  {$\sf{A}(2,-2,0)$};
\draw (194.73,143.09) node [anchor=north] [inner sep=0.75pt]  [font=\scriptsize,color={rgb, 255:red, 155; green, 155; blue, 155 }  ,opacity=1 ]  {$\sf{A}(1,-2,1)$};
\draw (194.73,167.65) node [anchor=north] [inner sep=0.75pt]  [font=\scriptsize,color={rgb, 255:red, 155; green, 155; blue, 155 }  ,opacity=1 ]  {$\sf{A}(0,-2,2)$};
\draw (194.73,192.21) node [anchor=north] [inner sep=0.75pt]  [font=\scriptsize]  {$\sf{C}_{0} (3,-3,0)$};
\draw (194.23,216.77) node [anchor=north] [inner sep=0.75pt]  [font=\scriptsize,color={rgb, 255:red, 155; green, 155; blue, 155 }  ,opacity=1 ]  {$\sf{B}(2,-3,1)$};
\draw (194.23,241.32) node [anchor=north] [inner sep=0.75pt]  [font=\scriptsize,color={rgb, 255:red, 155; green, 155; blue, 155 }  ,opacity=1 ]  {$\sf{B}(1,-3,2)$};
\draw (195.23,265.89) node [anchor=north] [inner sep=0.75pt]  [font=\scriptsize]  {$\sf{C}'_{0} (0,-3,3)$};
\draw (348.06,69.19) node [anchor=north] [inner sep=0.75pt]  [font=\scriptsize,color={rgb, 255:red, 155; green, 155; blue, 155 }  ,opacity=1 ]  {$\sf{A}(1,0,0)$};
\draw (348.06,93.76) node [anchor=north] [inner sep=0.75pt]  [font=\scriptsize,color={rgb, 255:red, 155; green, 155; blue, 155 }  ,opacity=1 ]  {$\sf{A}(0,0,1)$};
\draw (351.56,117.77) node [anchor=north] [inner sep=0.75pt]  [font=\scriptsize,color={rgb, 255:red, 155; green, 155; blue, 155 }  ,opacity=1 ]  {$\sf{A}(2,-1,0)$};
\draw (351.56,142.89) node [anchor=north] [inner sep=0.75pt]  [font=\scriptsize,color={rgb, 255:red, 155; green, 155; blue, 155 }  ,opacity=1 ]  {$\sf{A}(1,-1,1)$};
\draw (351.56,167.46) node [anchor=north] [inner sep=0.75pt]  [font=\scriptsize,color={rgb, 255:red, 155; green, 155; blue, 155 }  ,opacity=1 ]  {$\sf{A}(0,-1,2)$};
\draw (352.06,192.03) node [anchor=north] [inner sep=0.75pt]  [font=\scriptsize,color={rgb, 255:red, 155; green, 155; blue, 155 }  ,opacity=1 ]  {$\sf{D}(3,-2,0)$};
\draw (351.56,216.59) node [anchor=north] [inner sep=0.75pt]  [font=\scriptsize,color={rgb, 255:red, 155; green, 155; blue, 155 }  ,opacity=1 ]  {$\sf{A}(2,-2,1)$};
\draw (351.56,241.16) node [anchor=north] [inner sep=0.75pt]  [font=\scriptsize,color={rgb, 255:red, 155; green, 155; blue, 155 }  ,opacity=1 ]  {$\sf{A}(1,-2,2)$};
\draw (353.56,265.73) node [anchor=north] [inner sep=0.75pt]  [font=\scriptsize,color={rgb, 255:red, 155; green, 155; blue, 155 }  ,opacity=1 ]  {$\sf{D}'(0,-2,3)$};
\draw (351.56,290.29) node [anchor=north] [inner sep=0.75pt]  [font=\scriptsize]  {$\sf{C}_{0} (3,-3,1)$};
\draw (351.06,314.86) node [anchor=north] [inner sep=0.75pt]  [font=\scriptsize,color={rgb, 255:red, 128; green, 128; blue, 128 }  ,opacity=1 ]  {$\sf{B}(2,-3,2)$};
\draw (353.06,339.44) node [anchor=north] [inner sep=0.75pt]  [font=\scriptsize]  {$\sf{C}'_{0} (1,-3,3)$};

\end{tikzpicture}
    \caption{Structure of $\Omega_0$}
    \label{fig:m9_46-cpx-red}
\end{figure}

\Cref{fig:m9_46-cpx} depicts the structure of the reduced $\tau$-complex $\Omega$ around homological grading $0$, where $\sf{A}, \sf{B}, \sf{C}, \sf{C}', \sf{D}, \sf{D}'$ represent diagrams as in \Cref{fig:m9_46-diag}. Note that $\sf{A}, \sf{B}$ are self-symmetric, and $(\sf{C}, \sf{C}'), (\sf{D}, \sf{D}')$ are symmetric pairs. Summands of $\Omega$ are indexed by vertices $(u, v, w)$ with $-3 \leq v \leq 0$ and $0 \leq u, w \leq 3$. We refer to the components of the differential as \textit{edges}, and edges of the form
\[
    (u, v, w) \to (u, v+1, w)
\]
\textit{central edges}. In \Cref{fig:m9_46-cpx}, morphisms corresponding to the central edges are indicated by the colors: red for $a$, blue for $b$, green for $m$, and yellow for $\Delta$. The cycle $z$ has target in $\sf{A}(0, 0, 0)$. 

Using \cite[Lemma 4.2]{KimSano:2025} together with \Cref{prop:equiv-deloop,prop:equiv-gauss-elim}, we can equivariantly eliminate most of the summands around homological grading $0$, resulting in a reduced $\tau$-complex $\Omega_0$ as in \Cref{fig:m9_46-cpx-red}. Here, we have left the edge $\sf{A}(0, -1, 0) \to \sf{A}(0, 0, 0)$ unchanged. $\sf{C}_0$ and $\sf{C}'_0$ represent diagrams obtained from $\sf{C}$ and $\sf{C}'$ by removing the inner circles, which arise as the results of reducing the central edges
\[
    \sf{C} \xrightarrow{\ m\ } \sf{D},\quad  
    \sf{C}' \xrightarrow{\ m\ } \sf{D}'.
\]
Under this reduction, the cycle $z$ transforms into a cycle $z_0$, which still has target in $\sf{A}(0, 0, 0)$. 

Let us observe what happens if we further try to reduce the unchanged edge. By delooping the inner circles of the two end diagrams of $\sf{A}(0, -1, 0) \to \sf{A}(0, 0, 0)$, from \cite[Lemma 4.2]{KimSano:2025}, the edge transforms into 
\[
\begin{tikzcd}[row sep=.15em]
    & & & &  \sf{A}_0^+\arrow[rrdd,"I" description] \arrow[rr,"Y"] && \sf{A}_0^+ \\
    \sf{A}\arrow[rr,"a"]&& \sf{A} & \leadsto& &&\\
    & & & & \sf{A}_0^- \arrow[rr,"X"]&& \sf{A}_0^-
\end{tikzcd}
\]
The image of $z_0$ will now have components into $\sf{A}_0^+(0, 0, 0)$ and $\sf{A}_0^-(0, 0, 0)$. Then, after eliminating the diagonal edge $I$, the resulting cycle will have non-zero cobordisms into $\sf{C}_0(3, -3, 0)$ and $\sf{C}'_0(0, -3, 3)$, as a result of \Cref{prop:gauss-elim}. This is what obstructs $z_0$ from being $h$-divisible. 

Now, let us consider the involutive complex $\Gamma = \cal{Q}(\Omega_0)$. Apply the symmetry-breaking reduction (\Cref{prop:vertical-reduction}) to eliminate the vertical pairs 
\begin{align*}
    \underline{\sf{C}'_0}(0, -3, 3) &\xrightarrow{\ I \ } \overline{\sf{C}'_0}(0, -3, 3), \\
    \underline{\sf{C}'_0}(1, -3, 3) &\xrightarrow{\ I \ } \overline{\sf{C}'_0}(1, -3, 3)
\end{align*}
of $\Gamma$, and let $\Gamma'$ denote the resulting complex. We claim,

\begin{claim}
    In $\Gamma'$, 
    \[
        \overline{\sf{A}}(0, -1, 0) \xrightarrow{\ a \ } \overline{\sf{A}}(0, 0, 0)
    \]
    is the only non-trivial arrow starting from $\overline{\sf{A}}(0, -1, 0)$.
\end{claim}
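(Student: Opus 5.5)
We will determine every arrow out of $\overline{\sf{A}}(0,-1,0)$ by starting from $\Gamma = \cal{Q}(\Omega_0)$ and tracking the two symmetry-breaking eliminations. The analysis happens inside $\Gamma = \cal{Q}(\Omega_0)$, whose differential is
\[
    D = \begin{pmatrix} d & \\ I - I_\tau & -d \end{pmatrix}.
\]
Thus the only arrows leaving an overlined summand $\overline{X}$ are the copies $-d$ of the arrows of $\Omega_0$ leaving $X$. Since $\sf{A}$ is self-symmetric, $\sf{A}(0,-1,0)$ is $\tau$-invariant, and its $I-I_\tau$ arrow goes \emph{into} $\overline{\sf{A}}(0,-1,0)$, not out of it.

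\textbf{Step 1: arrows out of $\sf{A}(0,-1,0)$ in $\Omega_0$.} From \Cref{fig:m9_46-cpx-red}, we read off the edges starting at $\sf{A}(0,-1,0)$. These are the unchanged central edge $a$ to $\sf{A}(0,0,0)$, together with the edges into $\sf{C}_0(3,-3,0)$ and $\sf{C}'_0(0,-3,3)$ in homological degree $0$. (By the picture, all other degree-$0$ summands were eliminated.) We must check that the Gaussian eliminations producing $\Omega_0$ created no further arrows out of $\sf{A}(0,-1,0)$; that is exactly what the figure records.

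\textbf{Step 2: the effect of the symmetry-breaking reductions.} We apply \Cref{prop:vertical-reduction} with $\tau X = \sf{C}'_0(0,-3,3)$ and $X = \sf{C}_0(3,-3,0)$. In the notation there, take $Y = \sf{A}(0,-1,0)$, with $a$ and $b$ the edges into $\sf{C}_0$ and $\sf{C}'_0$ respectively. After the elimination, the arrow $\overline{Y} \to \overline{X}$ becomes $a - I_\tau b$, and a new arrow $-db$ appears from $\overline{Y}$ toward $\underline{Z}$ for each $Z$ that $\sf{C}'_0(0,-3,3)$ maps to. Two things must then be checked.

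First, the arrow $\overline{\sf{A}}(0,-1,0) \to \overline{\sf{C}_0}(3,-3,0)$ must become $a - I_\tau b = 0$. The two edges $\sf{A}\to\sf{C}_0$ and $\sf{A}\to\sf{C}'_0$ are $\tau$-images of each other, because the source is $\tau$-invariant and the $\tau$-matching preserves the differential (condition (ii)). Hence $b = \tau(a)$ with the identification $J_\tau$, so $I_\tau b = a$ and the difference vanishes over $\FF_2$.

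Second, we must control the new arrows $-db$. Here $Z$ ranges over targets of $\sf{C}'_0(0,-3,3)$ in degree $1$; from the figure these are $\sf{C}'_0(1,-3,3)$ (and possibly $\sf{C}_0(3,-3,1)$). The second vertical elimination, of $\underline{\sf{C}'_0}(1,-3,3)\to\overline{\sf{C}'_0}(1,-3,3)$, removes $\underline{\sf{C}'_0}(1,-3,3)$, and the arrow $-db$ into it is deleted along with it. Applying \Cref{prop:vertical-reduction} again, that deletion rewrites arrows into $\underline{\sf{C}_0}(3,-3,1)$ by $-db \mapsto -db + (\ldots)I_\tau$. These contributions must cancel by the same $\tau$-symmetry argument: the composite through $\sf{C}'_0$ equals the $\tau$-image of the composite through $\sf{C}_0$. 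The composite through $\sf{C}_0$ is zero in $\Omega_0$ since $d^2=0$, given that $\sf{C}_0(3,-3,0)\to\sf{C}_0(3,-3,1)$ is the only path there.

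\textbf{Main obstacle.} The hard part is this bookkeeping of the induced arrows $-db$ across the two successive eliminations, and showing that they either land on eliminated summands or cancel. We would first argue via $d^2=0$ in $\Omega_0$: the sum of length-two paths from $\sf{A}(0,-1,0)$ to each degree-$1$ summand is zero. Combined with $\tau$-symmetry over $\FF_2$, this should force the surviving correction terms to vanish, leaving $a$ to $\overline{\sf{A}}(0,0,0)$ as the only arrow.
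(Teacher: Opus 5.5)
Your outline is the same as the paper's: the three edges out of $\sf{A}(0,-1,0)$ in $\Omega_0$, the two symmetry-breaking eliminations at $\sf{C}'_0(0,-3,3)$ and $\sf{C}'_0(1,-3,3)$, the modified edge vanishing by symmetry, and the new edge dying in the second elimination. However, two steps are not correct as written.

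\textbf{The vanishing of the modified edge.} Write $a\colon \sf{A}(0,-1,0)\to\sf{C}_0(3,-3,0)$ and $b=\tau(a)$ as you do. Naturality gives $J_\tau a = \tau(a) J_\tau$, and from this you only get $I_\tau b = a\, I_\tau|_{\sf{A}}$. So the modified edge is $a - I_\tau b = a(I - I_\tau)$. It vanishes because $\sf{A}$ has no asymmetric circles, so $I_\tau = J_\tau$ is the identity on $\sf{A}$; this is the fact the paper invokes. $\tau$-invariance of the summand is not enough. A $\tau$-invariant summand containing a swapped pair of circles $C, \tau C$ has $I_\tau \neq I$, and then $a(I - I_\tau)$ need not vanish.

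\textbf{What the second elimination does.} Eliminating $\underline{\sf{C}'_0}(1,-3,3) \xrightarrow{I} \overline{\sf{C}'_0}(1,-3,3)$ only alters arrows $P \to Q$ with $P \to \overline{\sf{C}'_0}(1,-3,3)$ and $\underline{\sf{C}'_0}(1,-3,3) \to Q$. Arrows \emph{into} the source $\underline{\sf{C}'_0}(1,-3,3)$ are simply discarded; they are the $\alpha$-component in \Cref{prop:gauss-elim}.

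Since $\overline{\sf{A}}(0,-1,0)$ lies in degree $0$ and $\overline{\sf{C}'_0}(1,-3,3)$ in degree $2$, no arrow out of $\overline{\sf{A}}(0,-1,0)$ is rewritten. So there are no correction terms into $\underline{\sf{C}_0}(3,-3,1)$ to cancel.

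Your proposed cancellation would not have worked anyway. $\Omega_0$ also has an edge $\sf{A}(0,0,0) \to \sf{C}_0(3,-3,1)$, so $d^2=0$ only says that the two length-two paths from $\sf{A}(0,-1,0)$ to $\sf{C}_0(3,-3,1)$ sum to zero. It does not say that the path through $\sf{C}_0(3,-3,0)$ vanishes, and your remark that this is ``the only path there'' is false.

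What the proof actually needs, in place of your ``(and possibly $\sf{C}_0(3,-3,1)$)'', is a fact read off from \Cref{fig:m9_46-cpx-red}: $\sf{C}'_0(1,-3,3)$ is the \emph{only} target of an edge out of $\sf{C}'_0(0,-3,3)$. If there were an edge $\sf{C}'_0(0,-3,3) \to \sf{C}_0(3,-3,1)$, the induced arrow $\overline{\sf{A}}(0,-1,0) \to \underline{\sf{C}_0}(3,-3,1)$ would survive both eliminations untouched, and nothing in your argument would remove it. With that fact in hand, the single new arrow lands on $\underline{\sf{C}'_0}(1,-3,3)$ and disappears in the second elimination, which completes the proof.
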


\begin{proof}
    Let $e, e'$ denote the edges
    \[
\begin{tikzcd}[row sep=.5em]
{\sf{A}(0, -1, 0)} \arrow[rrd, "e"] \arrow[rrdd, "e'"'] &  &               \\
&  & {\sf{C}_0(3, -3, 0)} \\
&  & {\sf{C}'_0(0, -3, 3)}
\end{tikzcd}
    \]
    in $\Omega_0$. From the construction, we have $e' = \tau e$. From \Cref{prop:vertical-reduction}, the effect of applying the vertical reduction in $\Gamma$ to the edge
    \[
        \underline{\sf{C}'_0}(0, -3, 3) \xrightarrow{\ I \ } \overline{\sf{C}'_0}(0, -3, 3)
    \]
    is that, (i) the two summands will be eliminated, (ii) the edge $e$ will be modified as 
    \[
        e - I_\tau e': \overline{\sf{A}}(0, -1, 0) \to \overline{\sf{C}_0}(3, -3, 0),
    \]
    and (iii) since $\sf{C}'_0(1, -3, 3)$ is the only target of an edge starting from $\sf{C}'_0(0, -3, 3)$, a single new edge will be produced,
    \[
\begin{tikzcd}[row sep=.5em]
&  & {\underline{\sf{C}'_0}(1, -3, 3).} \\
{\overline{\sf{A}}(0, -1, 0)} \arrow[rru] &  &
\end{tikzcd}
    \]
    (Furthermore, the edge $\underline{\sf{C}_0}(3, -3, 0) \to \underline{\sf{C}'_0}(1, -3, 3)$ will be modified, and the vertical arrow $\underline{\sf{C}_0}(3, -3, 0) \to \overline{\sf{C}_0}(3, -3, 0)$ will vanish. Neither is an edge starting from $\overline{\sf{A}}(0, -1, 0)$.)

    For the modified edge in (ii), we have
    \[
        e - I_\tau e' = e (I - I_\tau) = 0
    \]
    since $\sf{A}$ is $\tau$-invariant with no asymmetric circles. The newly produced edge in (iii) will disappear after eliminating
    \[
        \underline{\sf{C}'_0}(1, -3, 3) \xrightarrow{\ I \ } \overline{\sf{C}'_0}(1, -3, 3).
    \]
    Thus, the only remaining edge from $\overline{\sf{A}}(0, -1, 0)$ is the one into $\overline{\sf{A}}(0, 0, 0)$. 
\end{proof}

Now, the edge in $\Gamma'$
\[
    \overline{\sf{A}}(0, -1, 0) \xrightarrow{\ a \ } \overline{\sf{A}}(0, 0, 0),
\]
reduces to
\[
\begin{tikzcd}[row sep=.3em]
    0 \arrow[rrd, dotted] \arrow[rr, dotted] && \overline{\sf{A}_0^+}(0, 0, 0) \\
    \overline{\sf{A}_0^-}(0, -1, 0) \arrow[rr, dotted] && 0.
\end{tikzcd}
\]
Under this reduction, the image of $\bar{z}_0$ has target only in $\overline{\sf{A}_0^+}(0, 0, 0)$, and one can verify from the explicit descriptions of the delooping isomorphism of \Cref{prop:delooping} and the retraction of \Cref{prop:gauss-elim} that this cobordism has a single $h$ factor, coming from $(\bullet \bullet) = h(\bullet)$ or $(\circ \circ) = h(\circ)$ (depending on the $XY$-coloring on $\sf{A}$). Since there are no other incoming arrows into the target object, it follows that $\bar{z}_0$ is $h$-divisible exactly once. Thus we have $\ud_h(K) = 3$, $\us(K) = 2$ and the proof of \Cref{thm:pretzel-equiv-s} for $p = 3$ is complete. The proof for the general case proceeds similarly. 
    
    \printbibliography
    \addresses
\end{document}